\theoremstyle{plain}
\newtheorem{theorem}{Theorem}[section]
\newtheorem{lemma}[theorem]{Lemma}
\newtheorem{prop}[theorem]{Proposition}
\newtheorem{fact}[theorem]{Fact}
\newtheorem{cor}[theorem]{Corollary}
\newtheorem{obs}[theorem]{Observation}
\newtheorem{claim}{Claim}
\newtheorem{question}{Question}
\theoremstyle{definition}
\newtheorem{definition}[theorem]{Definition}
\newtheorem{example}[theorem]{Example}
\newtheorem*{notation}{Notation}
\newcommand{\N}{\mathbb{N}}
\newcommand{\bfU}{\mathbf{U}}
\newcommand{\bfS}{\mathbf{S}}
\newcommand{\om}{\omega}
\newcommand{\Baire}{\om^\om}
\newcommand{\upto}{\upharpoonright}
\newcommand{\coupto}{\downharpoonright}
\newcommand{\fr}{{}^{\frown}}
\newcommand{\treeleq}{\trianglelefteq}
\newcommand{\mul}{\leadsto}
\newcommand{\ad}{\sqcup}
\newcommand{\syn}{{\tt Syn}}
\newcommand{\val}{{\tt val}}
\newcommand{\rank}{{\rm rank}}
\newcommand{\eval}[1]{[\hspace{-.15em}[{#1}]\hspace{-.15em}]}
\newcommand{\comU}{{U}}
\newcommand{\comr}{{u}}
\newcommand{\ep}{\varepsilon}
\newcommand{\sqleq}{\sqsubseteq}
\newcommand{\lang}{\mathcal{L}_{\rm Veb}}
\newcommand\tboldsymbol[1]{%
\protect\raisebox{0pt}[0pt][0pt]{%
$\underset{\widetilde{}}{\mathbf{#1}}$}\mbox{\hskip 1pt}}
\newcommand{\tpbf}[1]{\tboldsymbol{#1}}
\newcommand{\pcolon}{\colon\!\!\!\subseteq}
\newcommand{\bS}{\tpbf{\Sigma}^0}
\title[A syntactic approach to Borel functions]{A syntactic approach to Borel functions:\\Some extensions of Louveau's theorem}
\author{Takayuki Kihara and Kenta Sasaki}
\begin{document}

\begin{abstract}
Louveau showed that if a Borel set in a Polish space happens to be in a Borel Wadge class $\Gamma$, then its $\Gamma$-code can be obtained from its Borel code in a hyperarithmetical manner.
%
We extend Louveau's theorem to Borel functions:
If a Borel function on a Polish space happens to be a $\tpbf{\Sigma}_t$-function, then one can effectively find its $\tpbf{\Sigma}_t$-code hyperarithmetically relative to its Borel code.
More generally, we prove extension-type, domination-type, and decomposition-type variants of Louveau's theorem for Borel functions.
\end{abstract}

\maketitle

\section{Introduction}

\subsection{Summary}
In \cite{Lou80}, Louveau showed that if a Borel set in a Polish space happens to be a $\tpbf{\Sigma}^0_\xi$ set, then its $\tpbf{\Sigma}^0_\xi$-code can be obtained from its Borel code in a hyperarithmetical manner.
This is derived from the so-called Louveau separation theorem \cite{Lou80}, which states that if a disjoint pair of $\Sigma^1_1$ sets is separated by a $\tpbf{\Sigma}^0_\xi$ set, then it is separated by a $\tpbf{\Sigma}^0_\xi$ set which has a hyperarithmetical $\tpbf{\Sigma}^0_\xi$-code.
Louveau applied his result to solve the section problem on Borel sets.
This result is useful for extracting information about uniformity from a non-uniform condition.
For instance, using Louveau's theorem, Solecki \cite{Sol98} obtained an inequality for cardinal invariants related to decomposability of Borel functions, and Fujita-M\'atrai \cite{FuMa10} solved Laczkovich's problem on differences of Borel functions.
Louveau's theorem is also known to be a powerful tool which enables us to use effective methods in topological arguments.
For instance, Gregoriades-Kihara-Ng \cite{GKN21} used it as a tool to reduce a problem on descriptive set theory to a problem on computability theory, and gave a partial solution to the decomposability problem on Borel functions.

In \cite{Lou83}, Louveau revisited his theorem in the context of the Wadge hierarchy.
The notion of Wadge degrees \cite{Wad83} provides us an ultimate refinement of all known hierarchies in descriptive set theory, such as the Borel hierarchy and the Hausdorff-Kuratowski difference hierarchy.
However, the original definition of the Wadge hierarchy does not tell us the way to obtain each Wadge class.
To address this issue, Louveau \cite{Lou83} introduced a set of basic $\om$-ary Boolean operations, and observed that each blueprint $u$ for how to combine these operations determines a Wadge class $\tpbf{\Gamma}_u$.
Indeed, Louveau \cite{Lou83} showed that any Borel Wadge class in the Baire space can be obtained as a combination of these $\om$-ary Boolean operations, and moreover, he showed that if a Borel set in a Polish space happens to be a $\tpbf{\Gamma}_u$ set, then its $\tpbf{\Gamma}_u$-code can be obtained from its Borel code in a hyperarithmetical manner.
Louveau's explicit description of each Borel Wadge class has been applied to prove Borel Wadge determinacy within second order arithmetic \cite{LoSR}.
Similar notions have also been investigated, e.g.~in \cite{Sel95,Dup01}.
For instance, Duparc \cite{Dup01,Dupxx} introduced a slightly different set of operations, on the basis of which he gave the normal form of each Borel Wadge degree, even in non-separable spaces.

The notion of Wadge reducibility has been extended to functions, and extensively studied, e.g.~in \cite{EMS,Sel07,Blo14,KM19,Her20}.
For Borel functions with better-quasi-ordered ranges, Kihara-Montalb\'an \cite{KM19} obtained a full characterization of the Wadge degrees, with a heavy use of the (suitably extended) nested labeled trees and forests.
In order to obtain the result, Kihara-Montalb\'an \cite{KM19} introduced a language consisting of a few basic algebraic operations, inspired by Duparc's operations \cite{Dup01,Dupxx}, for describing any Wadge classes of Borel functions.
As before, each blueprint $t$ for how to combine these operations determines a Wadge class $\tpbf{\Sigma}_t$.
The main difference with Louveau's work is not merely that it is for functions, but that the blueprints (i.e., the terms in the language) naturally form a quasi-ordered algebraic structure, which can be viewed as the structured collection of nested labeled trees and forests \cite{KM19}, and forgetting the algebraic information from the term model yields the ordering of Wadge degrees; see also \cite{Sel20,Kih}.

In this article, we show that if a Borel function between Polish spaces happens to be a $\tpbf{\Sigma}_t$ function, then its $\tpbf{\Sigma}_t$-code can be obtained from its Borel code in a hyperarithmetical manner.
To show this, we rewrite the definition of $\tpbf{\Sigma}_t$ as the mechanism controlled by certain flowcharts only involving conditional branching.
We observe that the flowchart representation of Borel functions is a powerful tool for clarifying various arguments on the class $\tpbf{\Sigma}_t$.
Indeed, this flowchart definition makes our proof simpler than Louveau's original one in \cite{Lou83}, even though our theorem is much more general than Louveau's one.
As another application of the flowchart representation, we also see that the topological and symbolic complexity of Borel functions on Polish spaces coincide:
The symbolic Wadge degrees \`{a} la Pequignot \cite{Peq15} on bqo-valued Borel functions on a (possibly higher-dimensional) Polish space is exactly the hierarchy obtained from the topological Wadge classes $\tpbf{\Sigma}_t$.

\subsection{Preliminaries}

In this article, we assume that the reader is familiar with elementary facts about descriptive set theory.
For the basics of (effective) descriptive set theory, we refer the reader to Moschovakis \cite{Mos09}.

For a function $f\colon X\to Y$ and $A\subseteq X$, we use the symbol $f\upto A$ denote the restriction of $f$ up to $A$.
We denote a partial function from $X$ to $Y$ as $f\pcolon X\to Y$.
We also use the following notations on strings:
For finite strings $\sigma,\tau\in\om^{<\om}$, we write $\sigma\preceq\tau$ if $\sigma$ is an initial segment of $\tau$, and write $\sigma\prec\tau$ if $\sigma$ is a proper initial segment of $\tau$.
We also use the same notation even if $\tau$ is an infinite string, i.e., $\tau\in\om^\om$.
For $\sigma\in\om^{<\om}$ and $\ell\in\om$, define $\sigma\upto \ell$ as the initial segment of $\sigma$ of length $\ell$.
For finite strings $\sigma,\tau\in\om^{<\om}$, let $\sigma\fr\tau$ be the concatenation of $\sigma$ and $\tau$.
If $\tau$ is a string of length $1$, i.e., $\tau$ is of the form $\langle n\rangle$ for some $n\in\om$, then $\sigma\fr\langle n\rangle$ is abbreviated to  $\sigma\fr n$.
We always assume that $\om^\om$ is equipped with the standard Baire topology, that is, the $\om$-product of the discrete topology on $\om$.
For $\sigma\in\om^{<\om}$, let $[\sigma]$ be the clopen set generated by $\sigma$, i.e., $[\sigma]=\{x\in\om^\om:\sigma\prec x\}$.

For Polish spaces $X$ and $Y$, for a pointclass $\Gamma$, we say that $f\colon X\to Y$ is {\em $\Gamma$-measurable} if $f^{-1}[U]\in\Gamma$ for any open set $U\subseteq Y$.
A {\em computable Polish space} or a {\em recursively presented Polish space} is a triple $(X,d,\alpha)$, where $(X,d)$ is a Polish space, $(\alpha_n)_{n\in\om}$ is a dense sequence in $X$, and the map $(n,m)\mapsto d(\alpha_n,\alpha_m)$ has a nice computability-theoretic property; see \cite[Section 3I]{Mos09} and \cite{CCA}.
If $\Gamma$ is a lightface pointclass on computable Polish spaces, then we say that $f\colon X\to Y$ is {\em effectively $\Gamma$-measurable}, or simply, {\em $\Gamma$-measurable} if the relation $R(x,e)$ defined by $f(x)\in B_e$ is in $\Gamma$, where $B_e$ is the $e$-th rational open ball in $Y$.
Let ${\sf WO}$ be the set of all well-orders on $\om$.
For $\alpha\in{\sf WO}$, we define $|\alpha|$ as the order type of $\alpha$.

Let $X$ and $Y$ be topological spaces, and $Q$ be a quasi-ordered set.
A function $f\colon X\to Q$ is {\em Wadge reducible to} $g\colon Y\to Q$ (written $f\leq_Wg$) if there exists a continuous function $\theta\colon X\to Y$ such that $f(x)\leq_Qg(\theta(x))$ for any $x\in X$.


\section{Describing Borel Wadge classes}

\subsection{Syntax}

\begin{figure}[t]
{\footnotesize
\includegraphics[width=3cm]{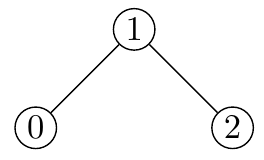}
\qquad\qquad\qquad\qquad
\includegraphics[width=4cm]{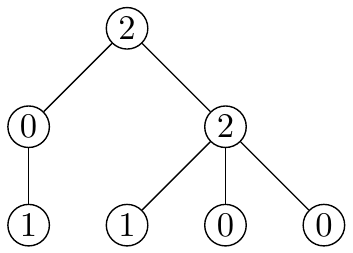}
\caption{\footnotesize (left) The tree $1\mul(0\ad 2)$;\quad (right) The tree $2\mul((0\mul 1)\ad(2\mul(1\ad 0\ad 0)))$}\label{figure:labeled-tree}
}
\end{figure}

As is well-known, the processes of approximating functions by finite mind-changes can be represented by labeled well-founded trees and forests; see e.g.~\cite{Her93,Sel07,Her20}.
As an algebraic aspect of such precesses, it is known that one can describe well-founded forests as terms in the signature $\{\ad,\mul\}$ over an (infinitary) equational theory (Figure \ref{figure:labeled-tree}; see also \cite{Sel20,Kih}).
A control mechanism for Borel functions is a bit more complicated, and it cannot be described by just a tree or a forest.
Instead, it is described by a matryoshka of trees: Within each node of the tree there is a tree, and within each node of that tree there is a tree, and within each node of that tree there is another tree, and so on.
Kihara-Montalb\'an \cite{KM19} introduced the language for describing matryoshkas of trees as follows:

\begin{definition}[Kihara-Montalb\'an \cite{KM19}]
For a set $Q$, the signature $\lang(Q)$ consists of a constant symbol $q$ for each $q\in Q$, a binary function symbol $\mul$, an $\om$-ary function symbol $\ad$, and a unary function symbol $\langle\cdot\rangle^{\om^\alpha}$ for each $\alpha<\om_1$.
If $Q=\emptyset$, we write $\lang$ for $\lang(\emptyset)$.
\end{definition}

Here, the original language introduced by Kihara-Montalb\'an \cite{KM19} uses the symbol ${}^\to$ instead of $\mul$ (the reason for using the symbol $\mul$ here is to avoid confusion, since the usual arrow symbol is used for many other purposes).
As in \cite{KM19}, we abbreviate the symbol $\langle\cdot\rangle^{\om^0}$ as $\langle\cdot\rangle$.
The description $\langle t\rangle$ represents a node labeled by $t$.
Not only that, but the label types are ranked, e.g.~$\langle t\rangle^{\om^\alpha}$ is a node labelled with $t$, and the rank of its label is $\om^\alpha$.
Hereafter, we use the symbol $\phi_\alpha$ to denote $\langle\cdot\rangle^{\om^\alpha}$.
As the function symbol $\phi_\alpha=\langle\cdot\rangle^{\om^\alpha}$ is specified by the fixed point axiom $\phi_\beta(\phi_\alpha(t))=\phi_\alpha(t)$ for any $\beta<\alpha$, we say that $\phi_\alpha$ is the {\em $\alpha$-th Veblen function symbol}, see also \cite{KS19,Kih}, where the symbol $s_\alpha$ is used in \cite{KS19,Sel20} instead of $\langle\cdot\rangle^{\om^\alpha}$ or $\phi_\alpha$.

The notion of terms in a given signature is inductively defined as usual in logic, universal algebra, and other areas.
In this article, the analysis of the syntax of terms plays an important role in various aspects.

\begin{definition}[Syntax tree]
Let $\mathcal{L}$ be a (possibly infinitary) signature.
Any $\mathcal{L}$-term $t$ defines a labeled tree $\syn_t=({\tt S}_t,{\tt l}_t)$ called the {\em syntax tree} of the term $t$ as follows:
\begin{itemize}
\item If $t$ is a constant or variable symbol $x$, then the syntax tree $\syn_t$ is the singleton ${\tt S}_t=\{\ep\}$ labeled by ${\tt l}_t(\ep)=x$, i.e., $\syn_t$ can be written as $\langle x\rangle$ in terms of the language of labeled trees.
\item If $t$ is of the form $f(\langle s_i\rangle_{i\in I})$ for some function symbol $f\in\mathcal{L}$ and $\mathcal{L}$-terms $\langle s_i\rangle_{i\in I}$, then $\syn_t$ is the result by adding a root $\ep$ and edges from $\ep$ to $\syn_{s_i}$ for $i\in I$, where the root $\ep$ is labeled by $f$, i.e., $\syn_t$ can be written as $\langle f\rangle\mul\ad_{i\in I}\syn_{s_i}$ in terms of the language of labeled trees.
\end{itemize}
\end{definition}

\begin{figure}[t]
{\footnotesize
\includegraphics[width=4cm]{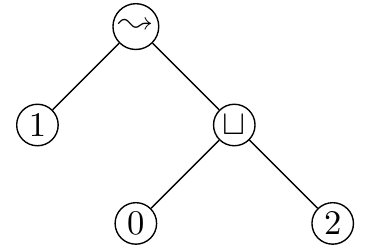}
\caption{\footnotesize The syntax tree of $1\mul(0\ad 2)$}\label{figure:syntax-tree}
}
\end{figure}

Note that the syntax tree of an $\mathcal{L}$-term is always well-founded.
When considering $\lang(Q)$-terms, be careful not to confuse the syntax tree of a term $t$ (e.g., Figure \ref{figure:syntax-tree}) with the tree represented by $t$ (e.g., Figure \ref{figure:labeled-tree}).
If $t$ is an $\lang(Q)$-term, then one can always think of $\syn_t$ as a labeled subtree of $\om^{<\om}$, since the signature $\lang(Q)$ consists of at most $\om$-ary function symbols.
More explicitly, for any $\lang(Q)$-term $t$, one may assume that ${\tt S}_t$ is a subtree of $\om^{<\om}$ as follows:
\begin{itemize}
\item If $t$ is of the form $q$ then ${\tt S}_t=\{\langle\rangle\}$, where $\langle\rangle$ is the empty string.
\item If $t$ is of the form $u\mul s$ then ${\tt S}_t=\{0\fr\sigma:\sigma\in{\tt S}_u\}\cup\{1\fr\sigma:\sigma\in{\tt S}_s\}$.
\item If $t$ is of the form $\ad_{n\in\om}s_n$ then ${\tt S}_t=\{n\fr\sigma:n\in\om\mbox{ and }\sigma\in{\tt S}_{s_n}\}$.
\item If $t$ is of the form $\phi_\alpha(s)$ then ${\tt S}_t=\{0\fr\sigma:\sigma\in{\tt S}_{s}\}$.
\end{itemize}

Recall that a term is {\em closed} if it does not contain variable symbols.
As in Kihara-Montalb\'an \cite{KM19}, we consider only closed $\lang(Q)$-terms in most cases, so we refer to a closed $\lang(Q)$-term simply as an $\lang(Q)$-term.
When dealing with an $\lang(Q)$-term which may contain variable symbols, we refer to it emphatically as an {\em open} $\lang(Q)$-term.

\begin{definition}
An $\lang(Q)$-term $t$ is {\em well-formed} if $t$ contains no subterm of the form $\phi_\alpha(\ad_{n\in\om}s_n)$; and $t$ is {\em normal} if the symbol $\mul$ always occurs as of the form $\phi_\alpha(u)\mul \ad_{n\in\om}s_n$ or ${q}\mul \ad_{n\in\om}s_n$.
\end{definition}

In other words, using a syntax tree, an $\lang(Q)$-term $t$ is well-formed if and only if, for any $\sigma\in\syn_t$, the following holds:
\[\mbox{$\sigma$ is labeled by $\phi_\alpha$}\implies\mbox{$\sigma\fr 0$ is not labeled by $\sqcup$}.\]

Similarly, an $\lang(Q)$-term $t$ is normal if and only if, for any $\sigma\in\syn_t$, the following holds:
\[\mbox{$\sigma$ is labeled by $\mul$}\implies
\begin{cases}
\mbox{$\sigma\fr 0$ is either a leaf or labeled by $\phi_\alpha$},\\
\mbox{$\sigma\fr 1$ is labeled by $\ad$}.
\end{cases}\]

In Kihara-Montalb\'an \cite{KM19}, only normal well-formed $\lang(Q)$-terms have been considered; however, a certain (infinitary) equational theory proves that every $\lang(Q)$-term is equal to a normal $\lang(Q)$-term; see \cite{Kih,Sel19b}.

Let $\treeleq$ be the nested homomorphic quasi-order on the normal well-formed $\lang(Q)$-terms introduced by Kihara-Montalb\'an \cite{KM19} to characterize the order type of the Wadge degrees of $Q$-valued Borel functions.

\begin{fact}[Kihara-Montalb\'an {\cite[Theorem 1.5]{KM19}}]\label{fact:KM-main}
Let $Q$ be a better-quasi-ordered set.
Then, the Wadge degrees of Borel functions $\om^\om\to Q$ is isomorphic to the quotient of the quasi-order $\treeleq$ on the well-formed $\lang(Q)$-terms.
\end{fact}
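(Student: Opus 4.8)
The plan is to exhibit a canonical $Q$-valued Borel function $f_t\colon\Baire\to Q$ for each normal well-formed $\lang(Q)$-term $t$ and to show that $t\mapsto[f_t]$ descends to an isomorphism between the quotient of $\treeleq$ and the poset of Wadge degrees of Borel functions $\Baire\to Q$. First I would reduce to normal terms: by the (infinitary) equational theory mentioned above, every well-formed $\lang(Q)$-term is equal, hence $\treeleq$-equivalent, to a normal well-formed one, so it suffices to treat normal terms. Then I would define $f_t$ by recursion on the syntax tree $\syn_t$, letting each operation act as a Wadge-class constructor: a constant $q$ yields the constant function with value $q$; $\ad_{n\in\om}s_n$ glues the $f_{s_n}$ along a clopen partition of a clopen piece of $\Baire$ together with a ``first-change'' eraser, realizing the countable supremum / first-difference operation; $u\mul s$ stacks $f_s$ above $f_u$ via a one-step Borel-rank jump (sequential composition, an ordinal product on the underlying length hierarchy); and $\phi_\alpha(u)$ performs a Veblen jump raising the ranks of the relevant preimages by an $\om^\alpha$-block, consistently with $\phi_\beta(\phi_\alpha(u))=\phi_\alpha(u)$ for $\beta<\alpha$. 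Well-formedness (no subterm $\phi_\alpha(\ad_{n}s_n)$) is exactly the hypothesis that makes this recursion well-posed, since it forbids a redundant $\ad$ directly under a jump.

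Next I would prove the two monotonicity-type statements. \emph{(Monotonicity.)} If $s\treeleq t$ then $f_s\leq_W f_t$: induct along the nested-homomorphism witness, realizing each clause of the definition of $\treeleq$ by a continuous reduction assembled from the gluing maps of the operation it refers to, using $q\leq_Q q'$ only to increase labels. \emph{(Exhaustiveness.)} Every Borel $f\colon\Baire\to Q$ is Wadge-equivalent to $f_t$ for some well-formed term $t$. This is the core. I would argue by transfinite recursion on the least $\xi$ with $f$ being $\tpbf{\Sigma}^0_\xi$-measurable: using an iterated difference-hierarchy decomposition of $\Baire$, relative to the previous level, into a $Q$-labeled well-founded forest of pieces on each of which $f$ restricts to a function of strictly smaller complexity, applying the recursion hypothesis to those restrictions, and reassembling the resulting terms with $\ad$, $\mul$ and a single $\phi_\alpha$. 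The bqo hypothesis on $Q$ enters decisively here: by Nash--Williams-type theory the class of countable $Q$-labeled well-founded forests under nested embedding is again a bqo, hence well-founded with no infinite antichain, which both licenses the recursion (a canonical minimal term realizes each decomposition pattern) and, below, the order-reflection.

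Finally I would prove order-reflection: for normal well-formed $s,t$, if $f_s\leq_W f_t$ then $s\treeleq t$. Here I would invoke Borel determinacy to obtain, via Wadge games, the semi-linear ordering structure of Wadge reducibility on $Q$-valued Borel functions, and then unwind a winning strategy level by level to extract a nested homomorphism of $\syn_s$ into $\syn_t$ respecting $Q$-labels, i.e.\ a witness for $s\treeleq t$; the residual cases are absorbed by the (again bqo-driven) tree-like structure of $\treeleq$, which mirrors that semi-linear ordering. Combining exhaustiveness with monotonicity and order-reflection shows that $t\mapsto[f_t]$ is a well-defined order-embedding of the quotient of $\treeleq$ onto the poset of Wadge degrees of Borel functions $\Baire\to Q$, which is the desired isomorphism.

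The step I expect to be the main obstacle is exhaustiveness: converting the iterated difference-hierarchy decomposition of an arbitrary Borel function into the precise term syntax, while carrying the bqo bookkeeping needed to know the decomposition may be taken canonical. Essentially all of the combinatorics of nested labeled trees, and the full strength of the bqo assumption on $Q$, are concentrated in that step; monotonicity and order-reflection are comparatively routine once the canonical functions and their Wadge games are set up.
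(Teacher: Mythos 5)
First, a point of reference: the paper does not prove this statement at all --- it is stated as a \emph{Fact} imported verbatim from Kihara--Montalb\'an \cite[Theorem 1.5]{KM19}, so there is no in-paper proof to compare your attempt against. What you have written is an outline of how the proof in \cite{KM19} is in fact organized (canonical $\tpbf{\Sigma}_t$-complete functions $f_t$ for normal terms, monotonicity of $t\mapsto f_t$ along $\treeleq$, exhaustiveness of the classes $\tpbf{\Sigma}_t$ among Borel Wadge degrees, and order-reflection via Wadge games plus bqo theory), and at that level of resolution the architecture is right.

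However, as a proof the proposal has genuine gaps precisely at the three steps that carry all the weight, and you should not present them as routine consequences of the setup. (i) Exhaustiveness is not obtained by a na\"ive transfinite recursion on the least $\xi$ with $f$ being $\tpbf{\Sigma}^0_\xi$-measurable together with ``an iterated difference-hierarchy decomposition'': the actual argument requires the machinery of iterated jump operators with true stages (the maps $j_\alpha$ appearing in Section 3 of this paper and in \cite[Sections 4.1 and 6.1]{KM19}) to witness that every Borel function factors through a canonical one, and the decomposition into pieces of strictly smaller complexity is exactly where the Veblen-ranked labels $\phi_\alpha$ must be produced, not merely consumed; your sketch gives no mechanism for this. (ii) The claim that countable transfinite nested $Q$-labeled forests under nested (homomorphic) embedding form a bqo is itself a hard theorem extending Laver's theorem on Fra\"iss\'e's conjecture; invoking ``Nash--Williams-type theory'' does not discharge it, and without it neither the well-foundedness nor the antichain analysis you rely on is available. (iii) Order-reflection --- extracting a nested homomorphism $\syn_s\to\syn_t$ from a winning strategy in a Wadge game between $f_s$ and $f_t$ --- is a delicate unwinding argument, not a corollary of semi-linear ordering. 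In short: the skeleton matches the cited source, but each of the three load-bearing steps is asserted rather than proved, so the proposal should be read as a plan for reproducing \cite{KM19}, not as a proof.
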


\subsubsection*{Coding a syntax tree:}

As usual, a subset of $\om^{<\om}$ can be viewed as a subset of $\om$ via an effective bijection $\om^{<\om}\simeq\om$.
We assume that the constant symbols $Q$ and the variable symbols are indexed by $\om^\om$, i.e., $Q=\{q_z\}_{z\in\om^\om}$ and ${\rm Var}=\{x_z\}_{z\in\om^\om}$, and then we consider the following coding:
\[{\tt c}(0,z)=\langle q_z\rangle,\quad{\tt c}(1,z)=x_z,\quad{\tt c}(2,z)=\ \mul,\quad{\tt c}(3,z)=\ad,\quad{\tt c}(4,z)=\phi_{|z|},\]
where ${\tt c}(4,z)$ is defined only when $z\in{\sf WO}$.
The function ${\tt c}$ gives a representation of the $\lang(Q)$-symbols.
A realizer of a labeling function ${\tt l}_t$ is a function $\lambda_t\colon{\tt S}_t\to\om\times\om^\om$ such that ${\tt l}_t={\tt c}\circ\lambda_t$.
As we consider ${\tt S}_t$ as a subset of $\om$, a realizer $\lambda_t$ of a labeling function can be viewed as an element of $(\om\times\om^\om)^\om\simeq\om^\om$.
Via this identification, we call such a pair $({\tt S}_t,\lambda_t)$ (as an element of $\om^\om$) a code of the syntax tree $\syn_t=({\tt S}_t,{\tt l}_t)$.

As for effectivity, we say that an $\lang(Q)$-term $t$ is hyperarithmetic or $\Delta^1_1$ if its syntax tree $\syn_{t}$ has a $\Delta^1_1$-code.

\subsection{Semantics}

The essential idea of Kihara-Montalb\'an's Theorem (Fact \ref{fact:KM-main}) is to map the $\lang(Q)$-terms to the control mechanisms of Borel functions.
More precisely, Kihara-Montalb\'an assigned to each $\lang(Q)$-term $t$ a class $\tpbf{\Sigma}_t$ of Borel functions, which may be thought of as an ultimate refinement of the Borel/Baire hierarchy.

\begin{definition}[First definition of $\tpbf{\Sigma}_t$]\label{def:K-M-Sigma}
Let $Z$ be a zero-dimensional Polish space.
For each $\lang(Q)$-term $t$, we inductively define the classes $\tpbf{\Sigma}_t(Z)$ and $\tpbf{\Sigma}_t(Y;Z)$ of $Q$-valued functions on Polish spaces $Z$ and $Y\subseteq Z$ as follows:
\begin{enumerate}
\item $\tpbf{\Sigma}_{{q}}(Z)$ consists only of the constant function $x\mapsto q\colon Z\to Q$.
\item If $t=\ad_{i\in\om}s_i$, then $f\in\tpbf{\Sigma}_t(Z)$ if and only if there is an open cover $(U_i)_{i\in\om}$ of $Z$ such that $f\upto U_i\in\tpbf{\Sigma}_{s_i}(U_i;Z)$ for each $i\in\N$.
\item If $t=s\mul u$, then $f\in\tpbf{\Sigma}_t(Z)$ if and only if there is an open set $V\subseteq Z$ such that $f\upto V\in\tpbf{\Sigma}_u(V;Z)$ and $f\upto(Z\setminus V)=g\upto(Z\setminus V)$ for some $g\in\tpbf{\Sigma}_s(Z)$.
\item If $t=\phi_\alpha(s)$, then $f\in\tpbf{\Sigma}_t(Z)$ if and only if there is a $\tpbf{\Sigma}^0_{1+\om^\alpha}$-measurable function $\beta\colon Z\to Z$ and a $\tpbf{\Sigma}_s(Z)$ function $g\colon Z\to Q$ such that $f=g\circ\beta$.
\end{enumerate}

Here, a function $f\colon Y\to Q$ is in $\tpbf{\Sigma}_t(Y;X)$ if there exists a continuous function $\gamma\colon Y\to X$ such that $f=g\circ\gamma$ for some $g\in\tpbf{\Sigma}_t(X)$.
\end{definition}

In this article, we introduce various equivalent definitions of $\tpbf{\Sigma}_t$.
When we emphasize that it is $\tpbf{\Sigma}_t$ in the sense of Definition \ref{def:K-M-Sigma}, we write it as $\tpbf{\Sigma}_t^W$;
the superscript $W$ suggests that this is the one associated with a Wadge class.
Indeed, Kihara-Montalb\'an's theorem in \cite{KM19} shows that any Borel Wadge class can be described as $\tpbf{\Sigma}_t^W$ for some $\lang(Q)$-term $t$.
That is to say, the essential ingredient of Fact \ref{fact:KM-main} is not simply that the two structures are isomorphic, but more precisely that the complexity of the control mechanism of the Borel functions represented by a term corresponds exactly to the Wadge degree.
This can be stated as follows:

\begin{fact}[Kihara-Montalb\'an {\cite[Propositions 1.7 and 1.8]{KM19}}]\label{fact:KM-main2}
Let $Q$ be a better-quasi-ordered set, and $t$ be an $\lang(Q)$-term.
If $g\colon \Baire\to Q$ is a $\tpbf{\Sigma}_t^W$-function, then for any $f\colon\Baire\to Q$,
\[f\leq_W g\iff f\in\tpbf{\Sigma}^W_s(\Baire)\mbox{ for some $s\treeleq t$}.\]
\end{fact}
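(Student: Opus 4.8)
The plan is to read the equivalence as a statement about the Wadge cone below a distinguished $\leq_W$-complete member of $\tpbf{\Sigma}_t^W(\Baire)$; here ``a $\tpbf{\Sigma}_t^W$-function'' means one that is $\leq_W$-complete in its class (for an arbitrary $\tpbf{\Sigma}_t^W$-function only the left-to-right implication survives), and I restrict to Borel $f$ (the right-hand side is vacuous otherwise). First I would fix, by recursion on the syntax tree $\syn_t$ and in lockstep with the clauses of Definition~\ref{def:K-M-Sigma}, a canonical function $g_t\colon\Baire\to Q$ in $\tpbf{\Sigma}_t^W(\Baire)$: $g_q$ is the constant $q$; $g_{\ad_i s_i}$ is the sum of the $g_{s_i}$ over the clopen partition $\{[\langle i\rangle]\}_i$; $g_{s\mul u}$ puts a copy of $g_u$ on a fixed open set and copies the total function $g_s$ off it; and $g_{\phi_\alpha(s)}=g_s\circ\beta_\alpha$ for a fixed $\tpbf{\Sigma}^0_{1+\om^\alpha}$-measurable $\beta_\alpha\colon\Baire\to\Baire$ chosen \emph{universally}, i.e.\ so that every $\tpbf{\Sigma}^0_{1+\om^\alpha}$-measurable self-map of $\Baire$ equals $\beta_\alpha\circ\theta$ for some continuous $\theta$ (such a $\beta_\alpha$ is easy to extract from a universal $\tpbf{\Sigma}^0_{1+\om^\alpha}$ set). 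Then I would prove $f\leq_W g_t\iff\exists s\treeleq t\,(f\in\tpbf{\Sigma}_s^W(\Baire))$ by structural induction on $t$.

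For ``$\Leftarrow$'' I would split off two monotonicity facts. First, $s\treeleq t\Rightarrow g_s\leq_W g_t$: an induction along the generation of the nested homomorphic order, using that $g_{(\cdot)}$ is monotone under each of $\ad$, $\mul$, $\phi_\alpha$ --- a short diagram chase on the defining clauses, the $\phi_\alpha$ case calling on the universality of $\beta_\alpha$ --- together with the evident generator reductions $g_{s_i}\leq_W g_{\ad_j s_j}$, $g_u\leq_W g_{s\mul u}$, $g_s\leq_W g_{\phi_\alpha(s)}$. Second, $f\in\tpbf{\Sigma}_s^W(\Baire)\Rightarrow f\leq_W g_s$: an induction on $s$ unwinding Definition~\ref{def:K-M-Sigma} --- for $\ad$, refine the witnessing open cover to a clopen partition (zero-dimensionality of $\Baire$) and glue the coordinatewise reductions given by the hypothesis; for $\mul$, peel off the witnessing open set; for $\phi_\alpha$, compose the reduction for the inner term with the $\tpbf{\Sigma}^0_{1+\om^\alpha}$-measurable witness and route it through $\beta_\alpha$ by universality. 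Chaining yields $f\leq_W g_s\leq_W g_t\equiv_W g$.

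The direction ``$\Rightarrow$'' carries the content. Assume $f\leq_W g_t$ via continuous $\theta$, so $f(x)\leq_Q g_t(\theta(x))$ for all $x$, and induct on $t$. If $t=\ad_i s_i$, the clopen sets $\theta^{-1}[[\langle i\rangle]]$ partition $\Baire$, each nonempty one is homeomorphic to $\Baire$, and on it $f$ Wadge-reduces to $g_{s_i}$; the hypothesis gives $r_i\treeleq s_i$ with that piece in $\tpbf{\Sigma}_{r_i}^W$, and reassembling yields $f\in\tpbf{\Sigma}_{\ad_i r_i}^W$ with $\ad_i r_i\treeleq t$. If $t=s\mul u$, pulling back the fixed open set of $g_t$ gives an open $V\subseteq\Baire$ (again homeomorphic to $\Baire$); on $V$ the hypothesis gives $r\treeleq u$, while on the closed complement I would precompose with a retraction of $\Baire$ onto $\Baire\setminus V$ and use closure of $\tpbf{\Sigma}_s^W$ under continuous precomposition to replace the ``$\leq_Q$''-witness by an honest total $\tpbf{\Sigma}_{r'}^W$-function agreeing with $f$ off $V$, for some $r'\treeleq s$; then $f\in\tpbf{\Sigma}_{r'\mul r}^W$ and $r'\mul r\treeleq t$. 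If $t=\phi_\alpha(s)$, the composite $\beta_\alpha\circ\theta$ is $\tpbf{\Sigma}^0_{1+\om^\alpha}$-measurable; by a leveled change of topology --- refining the domain so as to keep it zero-dimensional Polish with its open sets inside $\tpbf{\Sigma}^0_{1+\om^\alpha}$ of the original topology --- I would make $\beta_\alpha\circ\theta$ continuous, so that $f\leq_W g_s$ on the refined space and the hypothesis gives $r\treeleq s$ with $f$ in $\tpbf{\Sigma}_r^W$ there; transporting back along a homeomorphism with $\Baire$, and noting that its inverse is a $\tpbf{\Sigma}^0_{1+\om^\alpha}$-measurable self-map of the original space, clause~(4) of Definition~\ref{def:K-M-Sigma} places $f$ in $\tpbf{\Sigma}^W_{\phi_\alpha(r)}$ (passing to a well-formed term equal to $\phi_\alpha(r)$ if needed), and $\phi_\alpha(r)\treeleq t$.

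Two places should carry the real weight. Inside the induction, the $\phi_\alpha$ clause is the delicate one: it rests on the universal map $\beta_\alpha$ and on the bookkeeping that the refined topology above really is zero-dimensional Polish with its opens at exactly level $1+\om^\alpha$ --- this is where the Veblen-function semantics and Louveau-style change-of-topology arguments do their work, and I expect it to be the main obstacle. The other, genuinely hard, import is the base case $t=q$: there $f\leq_W g_q$ only says $\mathrm{range}(f)\subseteq{\downarrow}q$, and to produce $s\treeleq q$ with $f\in\tpbf{\Sigma}_s^W(\Baire)$ one needs the coverage theorem of Kihara--Montalb\'an~\cite{KM19} --- every Borel function valued in the bqo ${\downarrow}q$ lies in some $\tpbf{\Sigma}_s^W$ --- which is the step that genuinely uses that $Q$ is better-quasi-ordered, together with the (essentially definitional) fact that $\treeleq$ places every term whose constants lie below $q$ at or below $q$.
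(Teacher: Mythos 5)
This statement is quoted in the paper as a Fact imported from Kihara--Montalb\'an \cite{KM19} and is not proved here, so there is no in-paper argument to compare yours against; I can only assess your reconstruction on its own terms. Your reading of the statement is the right repair: as literally written, the right-to-left implication fails for an arbitrary member of $\tpbf{\Sigma}_t^W(\Baire)$ (take $g$ constant), and the left-to-right implication fails for non-Borel $f$ (take $Q$ with a top element), so restricting to a $\leq_W$-complete $g$ and to Borel $f$ is exactly the intended content of \cite[Propositions 1.7 and 1.8]{KM19}. Your architecture --- canonical complete functions $g_t$ built by recursion on the syntax tree, a completeness lemma $f\in\tpbf{\Sigma}_s^W\Rightarrow f\leq_W g_s$, monotonicity of $t\mapsto g_t$ along $\treeleq$, and a downward-closure lemma proved by structural induction with a Kuratowski-style change of topology in the Veblen case --- matches the shape of the argument in \cite{KM19}, where the role of your universal $\beta_\alpha$ is played by the jump operators $j_\alpha$ with true stages that this paper itself invokes in the proof of Theorem \ref{prop:matryo-flow}. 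The inductive cases you sketch for the hard direction ($\ad$: pull back the canonical clopen partition; $\mul$: precompose with the retraction ${\tt out}_V$ to turn the off-$V$ piece into a total function as clause (3) of Definition \ref{def:K-M-Sigma} requires; $\phi_\alpha$: refine the topology to make $\beta_\alpha\circ\theta$ continuous and transport back along clause (4)) are all workable.

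Two caveats keep this an outline rather than a proof. First, the step $s\treeleq t\Rightarrow g_s\leq_W g_t$ cannot be a ``short diagram chase on the defining clauses'' unless $\treeleq$ were the quasi-order \emph{generated} by those clauses; it is not --- it is defined by explicit nested-homomorphism conditions between labeled trees, a definition this paper never states --- so that step is essentially Proposition 1.7 of \cite{KM19} itself, and the companion claim that every term whose constants lie below $q$ sits $\treeleq q$ is likewise not ``essentially definitional.'' Second, as you candidly acknowledge, your base case $t=q$ is the coverage theorem of \cite{KM19} (every Borel function into a bqo lies in some $\tpbf{\Sigma}_s^W$), i.e., the main theorem of that paper. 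So the proposal correctly reduces the Fact to the two central results of \cite{KM19} rather than proving it; given that the paper treats the statement as a black box, that is a reasonable, but not self-contained, outcome.
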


Note that Definition \ref{def:K-M-Sigma} is slightly different from Kihara-Montalb\'an's original definition \cite{KM19} of the class $\tpbf{\Sigma}_t$.
To explain the original definition of the class $\tpbf{\Sigma}_t$, we assume $Z=\om^\om$, and let us consider the following:
First, given an open set $V\subseteq Z$ one can effectively construct a map $e_V\colon\om\to\om^{<\om}$ such that $V=\bigcup_{n\in\om}[e_V(n)]$, and $e_V(n)$ is incomparable with $e_V(m)$ whenever $n\not=m$.
Then, define ${\tt in}_V(n\fr x)=e_V(n)\fr x$.
It is easy to see that ${\tt in}_V\colon\om^\om\simeq V$ is a homeomorphism.
Next, by zero-dimensionality of $Z$, one can also effectively find a continuous retraction ${\tt out}_V\colon\om^\om\to(\om^\om\setminus V)$.
See also Kihara-Montalb\'an \cite[Observations 3.5 and 3.6]{KM19} for the details.

\begin{definition}[Second definition of $\tpbf{\Sigma}_t$ {\cite[Definition 3.7]{KM19}}]\label{def:K-M-Sigma2}
For each $\lang(Q)$-term $t$, we inductively define $\tpbf{\Sigma}_t(Z)$ as the class fulfilling (1), (2$^\prime$), (3$^\prime$) and (4):
\begin{enumerate}
\item[(2$^\prime$)]
If $t=\ad_is_i$, then $f\in\tpbf{\Sigma}_t(Z)$ if and only if there is a clopen partition $(C_i)_{i\in\N}$ of $Z$ such that $f\circ {\tt in}_{C_i}\in\tpbf{\Sigma}_{s_i}(Z)$ for each $i\in\N$.
\item[(3$^\prime$)]
If $t=s\mul u$, then $f\in\tpbf{\Sigma}_t(Z)$ if and only if there is an open set $V\subseteq Z$ such that $f\circ {\tt in}_V\in\tpbf{\Sigma}_u(Z)$ and $f\circ{\tt out}_V\in\tpbf{\Sigma}_s(Z)$.
\end{enumerate}
\end{definition}

To avoid confusion, we write $\tpbf{\Sigma}_t^{W\circ}$ for $\tpbf{\Sigma}_t$ in the sense of Definition \ref{def:K-M-Sigma2}.

\begin{obs}
If $Z=\om^\om$, the first and second definitions of $\tpbf{\Sigma}_t(Z)$ coincide, i.e., $\tpbf{\Sigma}^W_t(Z)=\tpbf{\Sigma}_t^{W\circ}(Z)$.
\end{obs}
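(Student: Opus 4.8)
The plan is to prove the equivalence $\tpbf{\Sigma}^W_t(\Baire)=\tpbf{\Sigma}_t^{W\circ}(\Baire)$ by induction on the structure of the $\lang(Q)$-term $t$, matching the two inductive definitions clause by clause. The base case (1) is literally identical in both definitions, so nothing is needed there. For the remaining three cases, the crux is that the homeomorphisms ${\tt in}_V$, ${\tt out}_V$, ${\tt in}_{C_i}$ let one translate between ``restriction to a subspace with the ambient continuous map witnessing membership in $\tpbf{\Sigma}_t(Y;Z)$'' and ``precomposition with a total continuous retraction/embedding on all of $\Baire$''.

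First I would handle case (4), which is the easiest: it is phrased identically in both definitions (condition (4) appears verbatim in Definition \ref{def:K-M-Sigma2}), so the induction hypothesis for $s$ immediately gives the equivalence for $\phi_\alpha(s)$. Next I would treat case (3)/(3$^\prime$). Given $f\in\tpbf{\Sigma}^W_{s\mul u}(\Baire)$ with witnessing open set $V$, I would note that $f\upto V\in\tpbf{\Sigma}_u(V;\Baire)$ means $f\upto V=g\circ\gamma$ for a continuous $\gamma\colon V\to\Baire$ and $g\in\tpbf{\Sigma}_u(\Baire)$; composing with the homeomorphism ${\tt in}_V\colon\Baire\simeq V$ shows $f\circ{\tt in}_V=g\circ(\gamma\circ{\tt in}_V)\in\tpbf{\Sigma}_u(\Baire;\Baire)=\tpbf{\Sigma}_u(\Baire)$ (using that $\tpbf{\Sigma}_u(\Baire;\Baire)=\tpbf{\Sigma}_u(\Baire)$ since the identity is continuous, together with the IH). For the $\tpbf{\Sigma}_s$ part, from $f\upto(\Baire\setminus V)=g'\upto(\Baire\setminus V)$ with $g'\in\tpbf{\Sigma}_s(\Baire)$ and the continuous retraction ${\tt out}_V\colon\Baire\to(\Baire\setminus V)$, I would observe $f\circ{\tt out}_V=g'\circ{\tt out}_V$, and then need $g'\circ{\tt out}_V\in\tpbf{\Sigma}_s(\Baire)$ — which follows because $\tpbf{\Sigma}_s$ is closed under precomposition with continuous maps (a general fact one extracts from the definition, or proves alongside the induction). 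The converse direction reverses these computations using that ${\tt in}_V$ and ${\tt out}_V$ are continuous. Case (2)/(2$^\prime$) is analogous, passing between an open cover $(U_i)$ and a clopen partition $(C_i)$: every open cover of $\Baire$ refines to a clopen partition (by zero-dimensionality and the tree structure of $\om^{<\om}$), and conversely a clopen partition is in particular an open cover, so the two conditions on $f$ are interchangeable via the homeomorphisms ${\tt in}_{C_i}$ and the restriction maps.

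The main obstacle I expect is bookkeeping around the relative classes $\tpbf{\Sigma}_t(Y;Z)$ versus the ``precompose with a total map'' formulation, and in particular verifying the closure of $\tpbf{\Sigma}_t(\Baire)$ under precomposition with arbitrary continuous self-maps of $\Baire$ — this is used implicitly when moving the retraction ${\tt out}_V$ and the embeddings ${\tt in}_{C_i}$ inside. This closure property should itself be provable by a straightforward parallel induction on $t$ (the class $\tpbf{\Sigma}^0_{1+\om^\alpha}$ is closed under composition with continuous maps, open covers pull back to open covers, etc.), so I would either fold it into the same induction or isolate it as a preliminary lemma. Once that closure is in hand, the verification of each clause is a routine diagram chase with the homeomorphisms ${\tt in}_V,{\tt out}_V,{\tt in}_{C_i}$, and the observation follows.
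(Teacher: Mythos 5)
Your proposal is correct and follows essentially the same route as the paper: induction on the structure of $t$, matching the clauses via the homeomorphisms ${\tt in}_V$, ${\tt in}_{C_i}$ and the retraction ${\tt out}_V$, refining an open cover to a clopen partition by zero-dimensionality, and relying on closure of $\tpbf{\Sigma}_s$ under precomposition with continuous self-maps of $\Baire$ (which the paper also uses, implicitly through the definition of the relative class $\tpbf{\Sigma}_s(Y;Z)$). Your suggestion to isolate that closure property as a preliminary lemma is a reasonable, slightly more careful presentation of the same argument.
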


\begin{proof}
We first show the inclusion $\tpbf{\Sigma}_t^W(Z)\subseteq\tpbf{\Sigma}_t^{W\circ}(Z)$ by induction.
If $t=\ad_{i\in\om}s_i$, and $f\in\tpbf{\Sigma}^W_t(Z)$ via an open cover $(U_i)$ of $Z$, then $f\upto U_i=g_i\circ\gamma_i$ for some $g_i\in\tpbf{\Sigma}^W_{s_i}(Z)$ and $\gamma_i$.
By the induction hypothesis, we have $g_i\in\tpbf{\Sigma}^{W\circ}_{s_i}(Z)$.
Since $Z$ is zero-dimensional, one can effectively find a clopen partition $(C_i)$ such that $C_i\subseteq U_i$ and $\bigcup_iC_i=\bigcup_iU_i$.
Then, we get $f\circ{\tt in}_{C_i}=g_i\circ\gamma_i\circ{\tt in}_{C_i}\in\tpbf{\Sigma}_{s_i}^{W\circ}(Z)$ since $\gamma_i\circ{\tt in}_{C_i}\colon Z\to Z$ is continuous.
Hence, $f\in\tpbf{\Sigma}_t^{W\circ}(Z)$.
If $t=s\mul u$, and $f\in\tpbf{\Sigma}_t^W(Z)$ via an open set $V\subseteq Z$, then $f\upto (Z\setminus V)=g_0\circ\gamma_0$ for some $g_0\in\tpbf{\Sigma}_s^W(Z)$ and $\gamma_0$, and $f\upto V=g_1\circ\gamma_1$ for some $g_1\in\tpbf{\Sigma}_u^W(Z)$ and $\gamma_1$.
By the induction hypothesis, we have $g_0\in\tpbf{\Sigma}_s^{W\circ}(Z)$ and $g_1\in\tpbf{\Sigma}_u^{W\circ}(Z)$.
Then, we get $f\circ{\tt out}_V=g_0\circ\gamma_0\circ{\tt out}_V\in\tpbf{\Sigma}_s^W(Z)$ since $\gamma_0\circ{\tt out}_V\colon Z\to Z$ is continuous, and similarly, we also get $f\circ{\tt in}_V=g_1\circ\gamma_1\circ{\tt in}_V\in\tpbf{\Sigma}_u^W(Z)$ since $\gamma_1\circ{\tt in}_V\colon Z\to Z$ is continuous.

Next, we show the inclusion $\tpbf{\Sigma}_t^{W\circ}(Z)\subseteq\tpbf{\Sigma}_t^W(Z)$ by induction.
If $t=\ad_{i\in\om}s_i$, and if $f\in\tpbf{\Sigma}_t^{W\circ}(Z)$ via a clopen partition $(C_i)_{i\in\om}$ of $Z$, then by the induction hypothesis, one can see that $f\upto C_i\in\tpbf{\Sigma}_{s_i}^W(C_i;Z)$ via $g_i=f\circ{\tt in}_{C_i}$ and $\gamma_i={\tt in}_{C_i}^{-1}$.
Hence, $f\in\tpbf{\Sigma}_t^W(Z)$.
If $t=s\mul u$, and $f\in\tpbf{\Sigma}_t^{W\circ}(Z)$ via an open set $V\subseteq Z$, then by the induction hypothesis, one can see that $f\upto V\in\tpbf{\Sigma}_u^W(V;Z)$ via $g_0=f\circ{\tt out}_V$ and $\gamma_0={\rm id}$, and $f\upto(Z\setminus V)\in\tpbf{\Sigma}_s^W(Z\setminus V;Z)$ via $g_1=f\circ{\tt in}_V$ and $\gamma_1={\tt in}_V^{-1}$.
Hence, $f\in\tpbf{\Sigma}_t^W(Z)$.
\end{proof}

\subsection{Command}

Next, we decompose the inductive definition of $\tpbf{\Sigma}_t$ along the syntax tree of $t$.
This decomposition reveals the algorithmic aspect of the definition of $\tpbf{\Sigma}_t$.
Let $t$ be an $\lang(Q)$-term, and consider its syntax tree $\syn_t$.

\begin{definition}
A {\em command} on the $\lang(Q)$-term $t$ over a zero-dimensional Polish space $Z$ is a family $\bfU=(\comU_\sigma,\comr_\sigma)_{\sigma\in\syn_t}$ indexed by the syntax tree $\syn_t$ satisfying the following condition:
\begin{enumerate}
\item For a leaf $\sigma\in\syn_t$, then $\comU_\sigma=Z$.
\item If $\sigma\in\syn_t$ is labeled by $\mul$, then $\comU_\sigma$ is an open subset of $Z$, $\comr_{\sigma\fr 0}$ is the identity map, and $\comr_{\sigma\fr 1}\colon \comU_\sigma\to Z$ is a continuous function.
\item If $\sigma\in\syn_t$ is labeled by $\ad$, then $\comU_\sigma$ is a sequence $(\comU_{\sigma,n})_{n\in\om}$ of open subsets of $Z$, and $\comr_{\sigma\fr n}\colon \comU_{\sigma,n}\to Z$ is a continuous function for each $n\in\om$.
\item If $\sigma\in\syn_t$ is labeled by $\phi_\alpha$, then $\comU_\sigma=Z$, and $\comr_{\sigma\fr 0}\colon Z\to Z$ is a $\tpbf{\Sigma}^0_{1+\omega^\alpha}$-measurable function.
\end{enumerate}

Note that Definition \ref{def:K-M-Sigma} (2) requires an additional condition:
We say that a command $\bfU$ is {\em strongly total} if $(\comU_{\sigma,n})_{n\in\om}$ is an open cover of $Z$ whenever $\sigma$ is labeled by $\ad$.
\end{definition}

A command may be thought of as a control flow that defines a function:
Feed a value into an input variable ${\tt x}$.
Then follow the command from the root to a leaf, where each $\comU_\sigma$ represents a condition branch depending on ${\tt x}\in \comU_\sigma$ in the case (2) and on ${\tt x}\in U_{\sigma,n}$ in the case (3), and $\comr_{\sigma\fr n}$ represents a reassignment ${\tt x}\leftarrow \comr_{\sigma\fr n}({\tt x})$.
When we reach a leaf, output the label of the leaf, where note that any leaf must be labeled by a constant symbol $q\in Q$.

To give a formal definition of the above argument, for a node $\sigma\in\syn_t$ of length $\ell>0$, put
\[\val_\sigma=\comr_{\sigma}\circ\comr_{\sigma\upto{\ell-1}}\circ\dots\circ\comr_{\sigma\upto 2}\circ\comr_{\sigma\upto 1}\pcolon Z\to Z,\]
and $\val_\ep={\rm id}$, where $\ep$ is the empty string.
In other words, if $x$ is the first value fed into variable ${\tt x}$, then $\val_\sigma(x)$ is the value stored in variable ${\tt x}$ when we reach $\sigma$.
Next, we introduce the notion of a {\em true position} for $x\in Z$ (with respect to the command $\bfU$) in the following inductive manner, where we consider $\syn_t$ as a labeled subtree of $\om^{<\om}$:
\begin{itemize}
\item The root of the syntax tree $\syn_t$ is a true position for $x$.
\item Assume that $\sigma\in\syn_t$ is a true position for $x$, and is labeled by $\mul$.
If $\val_\sigma(x)\in\comU_\sigma$ then $\sigma\fr 1$ is a true position for $x$, and if $\val_\sigma(x)\not\in\comU_\sigma$ then $\sigma\fr 0$ is a true position for $x$.
\item Assume that $\sigma\in\syn_t$ is a true position for $x$, and is labeled by $\ad$.
If $\val_\sigma(x)\in U_{\sigma,n}$ then $\sigma\fr n$ is a true position for $x$.
\item Assume that $\sigma\in\syn_t$ is a true position for $x$, and is labeled by $\phi_\alpha$.
Then, the unique immediate successor $\sigma\fr 0$ of $\sigma$ is a true position for $x$.
\end{itemize}

If a leaf $\rho\in\syn_t$ is a true position for $x$, then we call it a {\em true path for $x$} (with respect to $\bfU$).
A label of a true path for $x$ is considered as an output of the function determined by the command $\bfU$.

\begin{notation}
We write $\eval{\bfU}(x)=q$ if $q$ is a label of a true path for $x$ with respect to $\bfU$.
\end{notation}

However, it may happen that there are many true paths for $x$ or there is no true path for $x$.
If, for any $x\in Z$, there is a true path for $x$, that is, $\eval{\bfU}(x)$ is defined, then we call $\bfU$ {\em total}.
It is easy to see that a strongly total command is total.
If, for any $x\in Z$, there are at most one value $q$ such that $\eval{\bfU}(x)=q$, then $\bfU$ is called a {\em deterministic command}.
Then, the function $\eval{\bfU}\colon Z\to Q$ is called the {\em evaluation} of the command $\bfU$.

\begin{obs}
Let $Z$ be a zero-dimensional Polish space.
For any $\lang(Q)$-term $t$, the class $\tpbf{\Sigma}_t^W(Z)$ is the set of functions of the evaluations $\eval{\bfU}$ of deterministic strongly total commands $\bfU$ on the term $t$ over the space $Z$.
\end{obs}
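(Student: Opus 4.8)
The plan is to argue by induction on the $\lang(Q)$-term $t$, matching the four clauses of Definition~\ref{def:K-M-Sigma} against the four possible labels of the root of the syntax tree $\syn_t$. Given a command $\bfU=(\comU_\sigma,\comr_\sigma)_{\sigma\in\syn_t}$ on $t$ over $Z$ and a child $\langle i\rangle$ of the root of $\syn_t$, write $\bfU_{\langle i\rangle}$ for the command over $Z$, on the immediate subterm of $t$ located at $\langle i\rangle$, obtained by restricting $\bfU$ to the subtree of $\syn_t$ below $\langle i\rangle$ and reindexing via $\sigma\mapsto\langle i\rangle\fr\sigma$. The only nontrivial elementary fact is a \emph{gluing lemma}, proved by induction on $\sigma$ by inspecting the $\mul$-, $\ad$- and $\phi_\alpha$-branching clauses one at a time: if $\langle i\rangle$ is a true position for $x$ with respect to $\bfU$, then for every node $\sigma$ of the subterm at $\langle i\rangle$ one has $\val_{\langle i\rangle\fr\sigma}(x)=v_\sigma(\val_{\langle i\rangle}(x))$, where $v_\bullet$ are the $\val$-maps of $\bfU_{\langle i\rangle}$, and $\langle i\rangle\fr\sigma$ is a true position for $x$ with respect to $\bfU$ if and only if $\sigma$ is a true position for $\val_{\langle i\rangle}(x)$ with respect to $\bfU_{\langle i\rangle}$, with matching leaf labels. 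Consequently the true paths of $\bfU$ through $\langle i\rangle$ correspond label-preservingly to the true paths of $\bfU_{\langle i\rangle}$ for $\val_{\langle i\rangle}(x)$, which reduces $\eval{\bfU}$ to the $\eval{\bfU_{\langle i\rangle}}$.

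\textbf{From $\tpbf{\Sigma}^W_t(Z)$ to commands.} For $t=q$ the unique command has $\comU_\ep=Z$ and no reassignment, evaluates to the constant $q$, and both sides equal $\{x\mapsto q\}$. For the inductive step, unwind the relevant clause of Definition~\ref{def:K-M-Sigma} for a given $f\in\tpbf{\Sigma}^W_t(Z)$ to obtain its witnessing data: an open cover $(U_i)_i$ of $Z$ with continuous $\gamma_i\colon U_i\to Z$ and $g_i\in\tpbf{\Sigma}^W_{s_i}(Z)$ such that $f\upto U_i=g_i\circ\gamma_i$ (case $\ad$); an open $V\subseteq Z$ with $g_0\in\tpbf{\Sigma}^W_s(Z)$, $g_1\in\tpbf{\Sigma}^W_u(Z)$ and continuous $\gamma_1\colon V\to Z$ such that $f\upto(Z\setminus V)=g_0\upto(Z\setminus V)$ and $f\upto V=g_1\circ\gamma_1$ (case $\mul$); a $\tpbf{\Sigma}^0_{1+\om^\alpha}$-measurable $\beta\colon Z\to Z$ with $g\in\tpbf{\Sigma}^W_s(Z)$ such that $f=g\circ\beta$ (case $\phi_\alpha$). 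By the induction hypothesis each of $g_i$, $g_0$, $g_1$, $g$ is the evaluation of a deterministic strongly total command over $Z$ on the appropriate subterm. Assemble $\bfU$ on $t$ by installing at the root exactly the branching data and reassignments the clause prescribes --- $\comU_\ep=(U_i)_i$ with $\comr_{\langle i\rangle}=\gamma_i$; or $\comU_\ep=V$ with $\comr_{\langle 0\rangle}={\rm id}$ and $\comr_{\langle 1\rangle}=\gamma_1$; or $\comU_\ep=Z$ with $\comr_{\langle 0\rangle}=\beta$ --- and installing the commands furnished by the induction hypothesis below the corresponding children. One checks directly that $\bfU$ is a command on $t$ over $Z$ (the type of $\comr_{\langle 0\rangle}=\beta$ fits the $\phi_\alpha$-clause), that it is strongly total ($(U_i)_i$ covers $Z$ in the $\ad$-case, and every other $\ad$-node belongs to one of the strongly total subcommands), and --- via the gluing lemma and the displayed equations for $f$ --- that $\bfU$ is deterministic with $\eval{\bfU}=f$.

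\textbf{From commands to $\tpbf{\Sigma}^W_t(Z)$.} Conversely, let $\bfU$ be a deterministic strongly total command on $t$ over $Z$. Here the only genuine difficulty is that $\bfU_{\langle i\rangle}$ need not be deterministic: branching sets $\comU_{\sigma,n}$ of an $\ad$-node may overlap, and although determinism of $\bfU$ forces the labels reached along all true paths from a fixed $x$ to agree, this can fail at points of $Z$ avoided by the relevant reassignments. To repair this I would first prove a \emph{normalization lemma}: every deterministic strongly total command $\bfU$ on $t$ over $Z$ admits a deterministic strongly total command $\bfU'$ on $t$ over $Z$ with $\eval{\bfU'}=\eval{\bfU}$ in which, at each $\ad$-node $\sigma$, the family $(\comU'_{\sigma,n})_n$ is a pairwise disjoint open cover of $Z$ refining $(\comU_{\sigma,n})_n$, with $\comr'_{\sigma\fr n}=\comr_{\sigma\fr n}\upto\comU'_{\sigma,n}$. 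Since $Z$ is second countable and zero-dimensional, every open cover of $Z$ refines to a cover by pairwise disjoint open sets; and since $\bfU$ is deterministic, shrinking each $\ad$-branching while keeping it covering preserves totality and does not change the leaf label reached from any given $x$, so $\eval{\bfU'}=\eval{\bfU}$; moreover the true positions of $\bfU'$ now form, for each $x$, a single branch to a single leaf, so $\bfU'$ --- and hence every $\bfU'_{\langle i\rangle}$ --- is deterministic and strongly total. Working with $\bfU'$, the gluing lemma reads off the clauses of Definition~\ref{def:K-M-Sigma}: in the $\ad$-case $(\comU'_{\ep,i})_i$ is an open cover of $Z$ and, for $x\in\comU'_{\ep,i}$, the value $\eval{\bfU'}(x)$ is the label of the true path of $\bfU'_{\langle i\rangle}$ at $\comr'_{\langle i\rangle}(x)$, so $f\upto\comU'_{\ep,i}=\eval{\bfU'_{\langle i\rangle}}\circ\comr'_{\langle i\rangle}$ with $\eval{\bfU'_{\langle i\rangle}}\in\tpbf{\Sigma}^W_{s_i}(Z)$ by the induction hypothesis and $\comr'_{\langle i\rangle}\colon\comU'_{\ep,i}\to Z$ continuous; the $\mul$- and $\phi_\alpha$-cases run the same way and yield, respectively, $f\upto(Z\setminus V)=\eval{\bfU'_{\langle 0\rangle}}\upto(Z\setminus V)$ together with $f\upto V=\eval{\bfU'_{\langle 1\rangle}}\circ\comr'_{\langle 1\rangle}$ (using $\comr'_{\langle 0\rangle}={\rm id}$), and $f=\eval{\bfU'_{\langle 0\rangle}}\circ\comr'_{\langle 0\rangle}$ with $\comr'_{\langle 0\rangle}$ a $\tpbf{\Sigma}^0_{1+\om^\alpha}$-measurable map. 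Hence $f\in\tpbf{\Sigma}^W_t(Z)$, completing the induction.

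The step I expect to be the main obstacle is the normalization lemma --- more precisely, noticing that a deterministic command can conceal nondeterministic sub-commands, and checking that disjointifying all $\ad$-branchings simultaneously leaves the evaluation unchanged. Everything else is bookkeeping along the syntax tree via the gluing lemma.
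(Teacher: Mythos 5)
The paper states this Observation without proof, treating the translation between the recursive clauses of Definition~\ref{def:K-M-Sigma} and the command formalism as a routine unwinding along the syntax tree, so there is no paper proof to compare against. Your argument is correct, and you put your finger on the one point that is not purely mechanical: restricting a deterministic command $\bfU$ to a subtree $\bfU_{\langle i\rangle}$ can lose determinism, because determinism of $\bfU$ only constrains the behaviour of the sub-command on the range of $\val_{\langle i\rangle}$, not on all of $Z$. Your fix --- first disjointify all $\ad$-branchings, which is possible since $Z$ is zero-dimensional and $\bfU$ is strongly total, and then observe that shrinking branchings only discards true paths of a deterministic command while strong totality is preserved --- is sound, and it mirrors the disjointification argument the paper later carries out for flowcharts in Proposition~\ref{prop:zero-dim-partition}. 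The remaining bookkeeping via your gluing lemma, including the reading-off of the three clauses of Definition~\ref{def:K-M-Sigma} in the converse direction and the check that the assembled command in the forward direction is deterministic because each sub-evaluation is forced to equal $f(x)$, is accurate.
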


One may consider $\eval{\bfU}$ even if $\bfU$ is non-deterministic, and in such a case, $\eval{\bfU}$ is multi-valued.
Next, we say that a command $\theta$ is {\em simple} if $u_{\sigma\fr i}$ is the identity map for each $i$ whenever $\sigma$ is labeled by either $\ad$ or $\mul$.
Later we will see that the deterministic total simple commands also yield the same class.
This notion induces the third inductive definition of $\tpbf{\Sigma}_t$:

\begin{definition}[Third definition of $\tpbf{\Sigma}_t$]\label{def:K-M-Sigma3}
For each $\lang(Q)$-term $t$, we inductively define $\tpbf{\Sigma}_t(Z)$ as the class fulfilling (1), (2$^{\prime\prime}$), (3$^{\prime\prime}$) and (4):
\begin{enumerate}
\item[(2$^{\prime\prime}$)]
If $t=\ad_is_i$, then $f\in\tpbf{\Sigma}_t(Z)$ if and only if there is an open cover $(U_i)_{i\in\N}$ of $Z$ such that $f\upto U_i\in\tpbf{\Sigma}_{s_i}(U_i)$ for each $i\in\N$.
\item[(3$^{\prime\prime}$)]
If $t=s\mul u$, then $f\in\tpbf{\Sigma}_t(Z)$ if and only if there is an open set $V\subseteq X$ such that $f\upto V\in\tpbf{\Sigma}_u(V)$ and $f\upto (Z\setminus V)\in\tpbf{\Sigma}_s(Z\setminus V)$.
\end{enumerate}
\end{definition}

To avoid confusion, let us write $\tpbf{\Sigma}_t'$ for $\tpbf{\Sigma}_t$ in the sense of Definition \ref{def:K-M-Sigma3}.
This third definition seems to be the most natural definition of $\tpbf{\Sigma}_t$ among the ones given so far.
It should be noted, however, that this third definition has a very different feature from the first two:
In the process of defining a function, a partial function with a complicated domain may appear.
Because of this feature, proving the equivalence of first and second definitions and the third definition is not an easy task (see Theorem \ref{prop:matryo-flow}), and for this reason, the relevance of the third definition to the Wadge theory is not immediately obvious.

\subsubsection*{Borel rank:}
We define the {\em Borel rank} of a node $\sigma\in\syn_t$ as follows.
First, enumerate all proper initial segments of $\sigma$ which are labeled by Veblen function symbols $\phi_\alpha$:
\[\tau_0\prec\tau_1\prec\tau_2\prec\dots\prec\tau_\ell\prec\sigma,\]
where $\tau_i$ is labeled by $\phi_{\alpha_i}$.
We call $(\tau_i)_{i\leq\ell}$ the {\em Veblen initial segments of $\sigma$}.
Then, the Borel rank of $\sigma\in\syn_t$ is defined as the following ordinal:
\[{\rm rank}(\sigma)=1+\om^{\alpha_0}+\om^{\alpha_{1}}+\dots+\om^{\alpha_{\ell}}.\]

\begin{obs}\label{obs:val-measurable}
The function $\val_\sigma\pcolon Z\to Z$ is $\tpbf{\Sigma}^0_{{\rm rank}(\sigma)}$-measurable.
Moreover, the domain of $\val_\sigma\pcolon Z\to Z$ is $\tpbf{\Sigma}^0_{{\rm rank}(\sigma)}$.
\end{obs}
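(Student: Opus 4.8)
The plan is to prove the claim by induction on $|\sigma|$, in a form that keeps track of the domain simultaneously. For a partial function $g\pcolon Z\to Z$ and an ordinal $\xi\geq 1$, say that $g$ \emph{is of class $\xi$} if $g^{-1}[U]\in\tpbf{\Sigma}^0_\xi(Z)$ for every open $U\subseteq Z$; taking $U=Z$ this already entails ${\rm dom}(g)=g^{-1}[Z]\in\tpbf{\Sigma}^0_\xi(Z)$, and conversely ``being of class $\xi$'' is equivalent to ``${\rm dom}(g)\in\tpbf{\Sigma}^0_\xi(Z)$ and $g$ is $\tpbf{\Sigma}^0_\xi$-measurable''. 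Thus the Observation asserts precisely that $\val_\sigma$ is of class $\rank(\sigma)$, and bundling the two conditions into one is what keeps the partiality of the $\val_\sigma$ harmless.

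The one technical ingredient I would record first is a standard composition fact for preimages: if $g\pcolon Z\to Z$ is of class $\xi$ (with $\xi\geq 1$), then $g^{-1}[B]\in\tpbf{\Sigma}^0_{\xi+\eta}(Z)$ for every $B\in\tpbf{\Sigma}^0_{1+\eta}(Z)$. This goes by induction on $\eta$: for $\eta=0$ it is the definition; for $\eta\geq 1$ write $B=\bigcup_n(Z\setminus B_n)$ with $B_n\in\tpbf{\Sigma}^0_{1+\eta_n}(Z)$ and $\eta_n<\eta$, apply the inductive hypothesis to each $B_n$, and use $g^{-1}[Z\setminus B_n]={\rm dom}(g)\setminus g^{-1}[B_n]$ together with ${\rm dom}(g)\in\tpbf{\Sigma}^0_\xi(Z)$ to land $g^{-1}[Z\setminus B_n]$ in $\tpbf{\Sigma}^0_{\xi+\eta_n+1}(Z)\subseteq\tpbf{\Sigma}^0_{\xi+\eta}(Z)$, and then closure under countable unions. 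From this I read off the two cases I need: if $g$ is of class $\xi$ and $h\pcolon Z\to Z$ is continuous with open domain (in particular $h={\rm id}$), then $h\circ g$ is of class $\xi$ (apply the fact with $\eta=0$ to $B=h^{-1}[U]$, which is open in $Z$); and if $g$ is of class $\xi$ and $h\colon Z\to Z$ is $\tpbf{\Sigma}^0_{1+\om^\alpha}$-measurable, then $h\circ g$ is of class $\xi+\om^\alpha$ (apply it with $\eta=\om^\alpha$ to $B=h^{-1}[U]\in\tpbf{\Sigma}^0_{1+\om^\alpha}(Z)$).

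Now the induction on $|\sigma|$. For $\sigma=\ep$, $\val_\ep={\rm id}$ is of class $1=\rank(\ep)$. For the inductive step, write $\sigma=\rho\fr m$, so that $\val_\sigma=\comr_\sigma\circ\val_\rho$ with $\rho$ the parent of $\sigma$; by the inductive hypothesis $\val_\rho$ is of class $\rank(\rho)$. If $\rho$ is labeled by $\mul$ or $\ad$, then by the clauses (2)--(4) defining a command (read off from the label of $\rho$), $\comr_\sigma$ is either the identity or a continuous function with open domain ($\comU_\rho$, resp.\ $\comU_{\rho,m}$), so $\val_\sigma$ is of class $\rank(\rho)$; and since $\rho$ carries no Veblen symbol, $\sigma$ has the same Veblen initial segments as $\rho$, whence $\rank(\sigma)=\rank(\rho)$. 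If $\rho$ is labeled by $\phi_\alpha$ (so $m=0$), then $\comr_\sigma\colon Z\to Z$ is $\tpbf{\Sigma}^0_{1+\om^\alpha}$-measurable, so $\val_\sigma$ is of class $\rank(\rho)+\om^\alpha$; and the Veblen initial segments of $\sigma=\rho\fr 0$ are exactly those of $\rho$ followed by $\rho$ itself, so by associativity of ordinal addition $\rank(\sigma)=\rank(\rho)+\om^\alpha$. In either case $\val_\sigma$ is of class $\rank(\sigma)$, which completes the induction; the ``Moreover'' part is just the instance $U=Z$.

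I do not expect a genuine obstacle here: the statement is a bookkeeping lemma. The points that need care are the ordinal arithmetic around the ``$1+{}$'' normalizations in the subscripts (checking that a $\tpbf{\Sigma}^0_{1+\om^\alpha}$-measurable map raises the class by exactly $\om^\alpha$, and that $\rank(\rho\fr 0)=\rank(\rho)+\om^\alpha$), and the fact that the $\val_\sigma$ are genuinely partial, which — as already built into the notion of ``class $\xi$'' above — forces one to carry the domain condition along inside the measurability statement rather than treating it as an afterthought.
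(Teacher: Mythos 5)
Your proof is correct and takes essentially the same approach the paper has in mind: an induction on $|\sigma|$ that composes through the $\comr$'s, using that each $\comr_\sigma$ has open domain. The paper's own proof is only two sentences (``the first assertion is clear; the second follows by induction since the domain of $\comr_\sigma$ is open''), so your write-up simply supplies the bookkeeping it leaves implicit — in particular the preimage-composition lemma ($g$ of class $\xi$ pulls $\tpbf{\Sigma}^0_{1+\eta}$ back to $\tpbf{\Sigma}^0_{\xi+\eta}$) and the observation, correct and worth recording, that the domain and measurability claims must be carried through the induction jointly rather than proved separately.
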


\begin{proof}
The first assertion is clear.
It is easy to show the second assertion by induction since the domain of $\comr_\sigma$ is open for each $\sigma\in\syn_t$.
\end{proof}

\subsubsection*{Coding a command:}

Using the standard codings of the open sets and the $\tpbf{\Sigma}^0_{1+\om^\alpha}$-measurable functions, one can define the notion of a code of a command $\bfU$ on an $\lang(Q)$-term $t$ in the straightforward manner.
More precisely, a {\em code} of a command $\bfU=(\comU_\sigma,\comr_\sigma)_{\sigma\in\syn_t}$ is a pair of a code of the syntax tree $\syn_t=({\tt S}_t,{\tt l}_t)$ and an ${\tt S}_t$-indexed collection $(c_\sigma)_{\sigma\in{\tt S}_t}$ of elements of $\om^\om$ satisfying the following conditions:
\begin{enumerate}
\item If $\sigma$ is a leaf, then $c_\sigma$ is arbitrary.
\item If $\sigma$ is labeled by $\mul$, then $c_\sigma$ is a triple of $\tpbf{\Sigma}^0_1$-codes of $(\comU_\sigma,\comr_{\sigma,0},\comr_{\sigma,1})$.
\item If $\sigma$ is labeled by $\ad$, then $c_\sigma$ is a sequence of $\tpbf{\Sigma}^0_1$-codes of elements of $(\comU_{\sigma,n},\comr_{\sigma\fr n})_{n\in\om}$.
\item If $\sigma$ is labeled by $\phi_\alpha$, then $c_\sigma$ is a $\tpbf{\Sigma}^0_{1+\om^\alpha}$-code of $\comr_{\sigma\fr 0}$.
\end{enumerate}

As before, $(c_\sigma)_{\sigma\in{\tt S}_t}$ can be considered as an element of $\om^\om$.


\section{Reassignment-elimination}

\subsection{Flowchart}\label{sec:flowchart}
The working mechanism of a command is a bit unclear, since it involves reassignment instructions ${\tt x}\leftarrow u({\tt x})$ by $\tpbf{\Sigma}^0_\alpha$-measurable functions $u$.
Fortunately, however, it is possible to eliminate the reassignment instructions:
Let us introduce the notion of a flowchart, which is a mechanism for defining functions using only conditional branching.
This notion is particularly useful when dealing with higher-dimensional spaces.
A similar concept has been studied, e.g.~in Kihara \cite{Kih16} and Selivanov \cite{Sel19b}.

\begin{definition}
A {\em flowchart} on an $\lang(Q)$-term $t$ over a topological space $X$ is a family $\bfS=(S_\sigma)_{\sigma\in\syn_t}$ indexed by the syntax tree $\syn_t$ satisfying the following condition:
\begin{enumerate}
\item For a leaf $\sigma\in\syn_t$, then $S_\sigma=X$.
\item If $\sigma\in\syn_t$ is labeled by $\mul$, then $S_\sigma$ is a $\tpbf{\Sigma}^0_{{\rm rank}(\sigma)}$ subset of $X$.
\item If $\sigma\in\syn_t$ is labeled by $\ad$, then $S_\sigma$ is a sequence $(S_{\sigma,n})_{n\in\om}$ of $\tpbf{\Sigma}^0_{{\rm rank}(\sigma)}$ subsets of $X$.
\item If $\sigma\in\syn_t$ is labeled by $\phi_\alpha$, then $S_\sigma=X$.
\end{enumerate}
\end{definition}

We introduce the notion of a {\em true position} (with respect to a flowchart $\bfS$) in the following inductive manner:
\begin{itemize}
\item The root of the syntax tree $\syn_t$ is a true position for $x$.
\item Assume that $\sigma\in\syn_t$ is a true position for $x$, and is labeled by $\mul$.
Then, if $x\in S_\sigma$ then $\sigma\fr 1$ is a true position for $x$, and if $x\not\in S_\sigma$ then $\sigma\fr 0$ is a true position for $x$.
\item Assume that $\sigma\in\syn_t$ is a true position for $x$, and is labeled by $\ad$.
If $x\in S_{\sigma,n}$ then $\sigma\fr n$ is a true position for $x$.
\item Assume that $\sigma\in\syn_t$ is a true position for $x$, and is labeled by $\phi_\alpha$.
Then, the unique immediate successor $\sigma\fr 0$ of $\sigma$ is a true position for $x$.
\end{itemize}

If a leaf $\rho\in\syn_t$ is a true position for $x$, then we call it a {\em true path for $x$} (with respect to $\bfS$).
A label of a true path for $x$ is considered as an output of the function determined by the flowchart $\bfS$.

\begin{notation}
We write $\eval{\bfS}(x)=q$ if $q$ is a label of a true path for $x$ with respect to $\bfS$.
\end{notation}

As before, it may happen that there are many true paths for $x$ or there is no true path for $x$.
If, for any $x\in X$, there is a true path for $x$, that is, $\eval{\bfS}(x)$ is defined, then we call $\bfS$ {\em total}.
If, for any $x\in X$, there are at most one value $q$ such that $\eval{\bfS}(x)=q$, then $\bfS$ is called a {\em deterministic flowchart}.
Then, the function $\eval{\bfS}\colon X\to Q$ is called the {\em evaluation} of the flowchart $\bfS$.

\begin{definition}[Fourth definition of $\tpbf{\Sigma}_t$]
Let $X$ be a topological space.
For any $\lang(Q)$-term $t$, define $\tpbf{\Sigma}_t(X)$ as the set of functions of the evaluations $\eval{\bfS}$ of deterministic total flowcharts $\bfS$ on the term $t$ over the space $X$.
\end{definition}

As mentioned above, this notion is easy to handle even in higher-dimensional spaces, and therefore, in this article, we declare this to be the correct definition of $\tpbf{\Sigma}_t$.
As before, one may consider $\eval{\bfS}$ even if $\bfS$ is non-deterministic, and in such a case, $\eval{\bfS}$ is multi-valued.
In this case, we also say that a multi-valued function $g$ is $\tpbf{\Sigma}_t$ if $g$ coincides with $\eval{\bfS}$ for a flowchart $\bfS$ on $t$.

\subsubsection*{Coding a flowchart:}

If $X$ is second-countable, using the standard codings of the $\tpbf{\Sigma}^0_\xi$ sets, one can define the notion of a code of a flowchart $\bfS$ on an $\lang(Q)$-term $t$ in the straightforward manner.
More precisely, a {\em code} of a flowchart $\bfS=(S_\sigma)_{\sigma\in\syn_t}$ is a pair of a code of the syntax tree $\syn_t=({\tt S}_t,{\tt l}_t)$ and an ${\tt S}_t$-indexed collection $(c_\sigma)_{\sigma\in{\tt S}_t}$ of elements of $\om^\om$ satisfying the following conditions:
\begin{enumerate}
\item If $\sigma$ is a leaf, then $c_\sigma$ is arbitrary.
\item If $\sigma$ is labeled by $\mul$, then $c_\sigma$ is a $\tpbf{\Sigma}^0_{\rank(\sigma)}$-code of $S_\sigma$.
\item If $\sigma$ is labeled by $\ad$, then $c_\sigma$ is a sequence of $\tpbf{\Sigma}^0_{\rank(\sigma)}$-codes of elements of $(S_{\sigma,n})_{n\in\om}$.
\item If $\sigma$ is labeled by $\phi_\alpha$, then $c_\sigma$ is arbitrary.
\end{enumerate}

As before, $(c_\sigma)_{\sigma\in{\tt S}_t}$ can be considered as an element of $\om^\om$.
Since there is a total representation of $\tpbf{\Sigma}^0_\xi$ sets for each $\xi<\om_1$, any sequence $(c_\sigma)_{\sigma\in{\tt S}_t}$ can be considered as a code of a flowchart.
If $t$ is a hyperarithmetical $\lang(Q)$-term, we say that a flowchart is $\Delta^1_1$ if it has a $\Delta^1_1$ code.
Define $\Sigma_t(\Delta^1_1;X)$ as the set of all functions $g\colon X\to Q$ determined by a $\Delta^1_1$ flowchart.
If the underlying space is clear from the context, we simply write $\Sigma_t(\Delta^1_1)$ instead of $\Sigma_t(\Delta^1_1;X)$.
We will discuss the complexity of the codes of total and deterministic flowcharts in Lemma \ref{lem:complexity-flowchart}.

\subsection{Technical lemmata}

We introduce here some useful concepts on flowcharts that we will use later, but not immediately.
First, one can define the notion of a true position by assigning a set $D_\sigma\subseteq X$ to each $\sigma\in\syn_t$.
\begin{enumerate}
\item For the root $\langle\rangle$ of $\syn_t$, define $D_{\langle\rangle}=X$.
\item If $\sigma$ is labeled by $\mul$, then define $D_{\sigma\fr 0}=D_\sigma\setminus S_\sigma$ and $D_{\sigma\fr 1}=D_\sigma\cap S_\sigma$.
\item If $\sigma$ is labeled by $\ad$, then define $D_{\sigma\fr n}=D_\sigma\cap S_{\sigma,n}$.
\item If $\sigma$ is labeled by $\phi_\alpha$, then define $D_{\sigma\fr 0}=D_\sigma$.
\end{enumerate}

We call $(D_\sigma)_{\sigma\in\syn_t}$ the {\em domain assignment} to $\bfS=(S_\sigma)_{\sigma\in\syn_t}$.
It is easy to see that $\sigma\in\syn_t$ is a true position for $x$ with respect to $\bfS$ if and only if $x\in D_\sigma$.
Hence, $\bfS$ is total if and only if $(S_{\sigma,n})_{n\in\om}$ covers $D_\sigma$ whenever $\sigma$ is labeled by $\ad$.
Using this notion, let us show a few complexity results on flowcharts.

\begin{lemma}\label{lem:complexity-flowchart}
Let $X$ be a computable Polish space, and $t$ be a hyperarithmetical $\lang(Q)$-term.
\begin{enumerate}
\item If $g\pcolon X\to Q$ is $\Sigma_t(\Delta^1_1)$, then there exists a $\Delta^1_1$-measurable function $G\pcolon X\to\om^\om$ such that $g(x)=q_{G(x)}$ for any $x\in{\rm dom}(g)$, where $Q$ is indexed as $\{q_z\}_{z\in\Baire}$.
\item The set of all codes of deterministic flowcharts on $t$ over $X$ is $\Pi^1_1$.
\item If $A$ is a $\Sigma^1_1$ subset of $X$, then the set of all codes of flowcharts $\bfS$ on $t$ over $X$ such that the domain of $\eval{\bfS}$ includes $A$ is $\Pi^1_1$.
In particular, the set of all codes of total flowcharts on $t$ over $X$ is $\Pi^1_1$.
\end{enumerate}
\end{lemma}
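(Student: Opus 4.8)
The plan is to verify all three clauses by analyzing the syntax-tree coding and the domain-assignment construction, propagating complexity bounds through the well-founded recursion over $\syn_t$. Throughout I will fix a $\Delta^1_1$-code of $\syn_t$ and use that a hyperarithmetical term has a $\Delta^1_1$ syntax tree, so quantifications over $\sigma\in\syn_t$ and over initial segments of $\sigma$ are $\Delta^1_1$; in particular the predicate ``$\sigma$ is a leaf / labeled by $\mul$ / by $\ad$ / by $\phi_\alpha$'' and the ordinal $\rank(\sigma)$ (recoverable from the Veblen initial segments) are uniformly $\Delta^1_1$ in the given code.

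For clause (1), I would make the evaluation procedure explicit: given a code of a $\Delta^1_1$ flowchart $\bfS$ and a point $x\in\dom(g)$, there is a unique true path $\rho$ for $x$, and I claim the map $x\mapsto\rho$ is $\Delta^1_1$-measurable on $\dom(g)$. This follows by tracing the domain assignment $(D_\sigma)$: membership $x\in D_\sigma$ is decided by the finitely many branching conditions ``$\val$-free'' tests $x\in S_\tau$ (or $x\in S_{\tau,n}$) for proper initial segments $\tau\prec\sigma$, each of which is $\tpbf\Sigma^0_{\rank(\tau)}$ with a $\Delta^1_1$ code and hence a $\Delta^1_1$ relation of $x$ uniformly; since $\syn_t$ is well-founded and $\Delta^1_1$, ``$\rho$ is the true path for $x$'' is a $\Delta^1_1$ relation of $(x,\rho)$, and on $\dom(g)$ it is a total single-valued relation, hence a $\Delta^1_1$-measurable function. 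Composing with the (fixed, computable) decoding $\sigma\mapsto$ (the $\om^\om$-index $z$ of the constant symbol $q_z$ labeling the leaf $\sigma$, read off from the realizer $\lambda_t$), I obtain the desired $G\pcolon X\to\om^\om$ with $g(x)=q_{G(x)}$.

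For clauses (2) and (3), the point is that ``$\rho$ is a true path for $x$ with respect to $\bfS$'' is arithmetical (indeed $\Delta^1_1$) in the flowchart code, the syntax-tree code, $x$, and $\rho$, by the same tracing of the domain assignment as above. Then: $\bfS$ is deterministic iff $\forall x\,\forall\rho\,\forall\rho'\,[(\rho,\rho'\text{ are true paths for }x)\to\rho=\rho']$, which is $\Pi^1_1$ (a universal real quantifier over a $\Delta^1_1$ matrix), giving (2). Likewise, for a $\Sigma^1_1$ set $A$, ``$A\subseteq\dom(\eval{\bfS})$'' reads $\forall x\,[x\in A\to\exists\rho\,(\rho\text{ is a true path for }x)]$; here ``$\exists\rho(\dots)$'' is $\Sigma^1_1$ but can be replaced by the $\Delta^1_1$ statement ``the leaf reached by following the branching conditions from the root is defined'' — equivalently ``$\exists\sigma\in\syn_t\,(\sigma\text{ is a leaf and }x\in D_\sigma)$'', where by well-foundedness and totality of the branching at $\mul$- and $\phi_\alpha$-nodes this holds iff at no $\ad$-node on the branch through $x$ do all $S_{\sigma,n}$ miss $x$; this is arithmetical in $x$ and the codes, so the whole displayed statement is $\Pi^1_1$. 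Taking $A=X$ gives the ``in particular'' clause.

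The main obstacle is clause (3): the naive formulation of ``$x\in\dom(\eval{\bfS})$'' carries an existential real quantifier over the branch/path, which would only give $\Pi^1_1$ after absorbing it — fine here — but one must be careful that the branch through $x$ is \emph{canonically determined} by the branching conditions (it is, since at $\mul$- and $\phi_\alpha$-nodes the successor is forced, and at $\ad$-nodes any chosen successor still must satisfy $x\in S_{\sigma,n}$), so that ``a true path exists'' is equivalent to an arithmetical statement about the domain assignment $(D_\sigma)$ rather than a genuine path search. Once this reduction is in place, the $\Pi^1_1$ bounds in (2) and (3) are immediate from the form $\forall x\,[\dots]$ with $\Delta^1_1$ (resp.\ arithmetical) matrix, using that $A$ being $\Sigma^1_1$ turns $x\in A\to(\dots)$ into a $\Pi^1_1$-over-$\Delta^1_1$ implication whose universal closure remains $\Pi^1_1$.
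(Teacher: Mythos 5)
Your proposal takes essentially the same route as the paper: track the domain assignment $(D_\sigma)_{\sigma\in\syn_t}$, observe that membership $x\in D_\sigma$ is uniformly $\Delta^1_1$ relative to the flowchart code, and then unwind ``true path,'' ``deterministic,'' and ``domain includes $A$'' into $\Pi^1_1$ formulas. The final complexity bounds you state are right, but two details need repair. In (2), characterizing ``deterministic'' as $\forall x\,\forall\rho\,\forall\rho'\,[\rho,\rho'\text{ true paths for }x\Rightarrow\rho=\rho']$ demands at most one true path; the paper defines deterministic as at most one \emph{output value}, and a deterministic flowchart can have several true paths for $x$ carrying the same label. As written you have bounded the complexity of a strictly smaller set of codes; the fix is to replace $\rho=\rho'$ with ``$\rho$ and $\rho'$ carry the same label,'' which is read off from the $\Delta^1_1$ syntax-tree code, and the matrix stays $\Delta^1_1$. (The same ``unique true path'' slip appears in (1), where it is harmless: $G(x)$ only needs to return the label of \emph{some} true path, and by determinism that label is well-defined.) In (3), the intermediate claim that ``the branch through $x$ is canonically determined'' is both unnecessary and false --- at an $\ad$-node, $x$ can lie in several $S_{\sigma,n}$, so there is no single canonical branch. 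What actually makes the argument work is simpler: $\syn_t$ is well-founded, so a true path is a \emph{finite} string, and ``$\exists\sigma\in\syn_t\,(\sigma\text{ is a leaf}\;\land\;x\in D_\sigma)$'' is a number quantifier over a $\Delta^1_1$ matrix, hence already $\Delta^1_1$ without any reduction to a canonical branch; prefixing with $\forall x\,(x\in A\Rightarrow\cdots)$ for $\Sigma^1_1$ $A$ then gives $\Pi^1_1$ directly, which is exactly the bound you conclude with.
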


\begin{proof}
(1)
Given a code $c$ of a flowchart $\bfS$, one can easily see that the domain assignment $(D_\sigma)_{\sigma\in\syn_t}$ for $\bfS$ is uniformly $\Delta^1_1$ relative to the code $c$.
Moreover, a node $\sigma\in\syn_t$ is a true position for $x$ if and only if $x\in\bigcap_{\tau\preceq\sigma}D_\tau$.
This is a $\Delta^1_1$ property relative to $c$.
Moreover, recall that a code of a syntax tree contains the information about what each node of the tree is labeled.
In particular, one can effectively recover information about the label of $\sigma$.
Let $G(x)$ return a code of the label of a true path for $x$, and then $G$ is clearly $\Delta^1_1$-measurable relative to $c$.

(2)
By definition, $c$ is a code of a deterministic flowchart if and only if for any $x\in X$ and any $\sigma,\tau\in\syn_t$, if both $\sigma$ and $\tau$ are true paths for $x$ w.r.t.~the flowchart coded by $c$, then both $\sigma$ and $\tau$ are labeled by the same symbol.
This is a $\Pi^1_1$ property.

(3) One can see that the domain of $\eval{\bfS}$ includes $A$ if and only if, whenever $\sigma$ is labeled by $\ad$, the sequence $(S_{\sigma,n})_{n\in\om}$ covers $A\cap D_\sigma$, i.e., for any $x\in X$, if $x\in A\cap D_\sigma$ then $x\in S_{\sigma,n}$ for some $n\in\om$.
Since $A$ is $\Sigma^1_1$, and $D_\sigma$ and $(S_{\sigma,n})_{n\in\om}$ are $\Delta^1_1$ relative to a given code, this is a $\Pi^1_1$ property.
\end{proof}

We say that a flowchart $\bfS$ is {\em monotone} if any set assigned to $\sigma\in \syn_t$ by $\bfS$ is a subset of $D_\sigma$.
In other words, if $\sigma$ is labeled by $\mul$, then $D_{\sigma\fr 1}=S_\sigma$; and if $\sigma$ is labeled by $\ad$, then $D_{\sigma\fr n}=S_{\sigma,n}$.

\begin{lemma}[see also Selivanov \cite{Sel19b}]\label{lem:monotone}
Given a flowchart $\bfS$ on a normal $\lang(Q)$-term $t$ one can effectively find a monotone flowchart $\bfS'$ on $t$ determining the same function as $\bfS$.
\end{lemma}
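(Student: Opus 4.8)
The plan is to process the syntax tree $\syn_t$ from the root toward the leaves and, at each branching node, replace the assigned sets by their intersections with the current true-position domain. Concretely, I will define a new family $\bfS'=(S'_\sigma)_{\sigma\in\syn_t}$ together with its domain assignment $(D'_\sigma)_{\sigma\in\syn_t}$ by simultaneous recursion along $\syn_t$, starting from $D'_{\langle\rangle}=X$. If $\sigma$ is labeled by $\mul$, set $S'_\sigma=S_\sigma\cap D'_\sigma$; if $\sigma$ is labeled by $\ad$, set $S'_{\sigma,n}=S_{\sigma,n}\cap D'_\sigma$ for each $n$; if $\sigma$ is a leaf or labeled by some $\phi_\alpha$, set $S'_\sigma=X$ as required. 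Then propagate $D'$ by the usual rules: $D'_{\sigma\fr 0}=D'_\sigma\setminus S'_\sigma$, $D'_{\sigma\fr 1}=D'_\sigma\cap S'_\sigma$ for $\mul$-nodes; $D'_{\sigma\fr n}=D'_\sigma\cap S'_{\sigma,n}$ for $\ad$-nodes; $D'_{\sigma\fr 0}=D'_\sigma$ for $\phi_\alpha$-nodes.

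The first thing to check is that $\bfS'$ is a legitimate flowchart on $t$, i.e. that $S'_\sigma$ has the right complexity. By Observation \ref{obs:val-measurable} (or rather the evident analogue for flowcharts: an easy induction shows $D'_\sigma$ is $\tpbf{\Sigma}^0_{\rank(\sigma)}$ whenever $t$ is normal, since crossing a $\phi_\alpha$-node is exactly where a new $\tpbf{\Sigma}^0_{1+\om^\alpha}$ layer is permitted and the recursion for $D'$ only intersects with sets of complexity $\tpbf{\Sigma}^0_{\rank(\sigma)}$ between consecutive Veblen nodes), the set $S'_\sigma=S_\sigma\cap D'_\sigma$ is $\tpbf{\Sigma}^0_{\rank(\sigma)}$; here normality of $t$ is what guarantees $\rank$ does not jump between a $\mul$- or $\ad$-node and the preceding segment, so the intersection stays within the allowed class. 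Next I verify by a straightforward induction on $\syn_t$ that $D'_\sigma=D_\sigma$ for every $\sigma$: at a $\mul$-node, $D'_{\sigma\fr 1}=D'_\sigma\cap(S_\sigma\cap D'_\sigma)=D'_\sigma\cap S_\sigma=D_\sigma\cap S_\sigma=D_{\sigma\fr 1}$ and $D'_{\sigma\fr 0}=D'_\sigma\setminus(S_\sigma\cap D'_\sigma)=D'_\sigma\setminus S_\sigma=D_{\sigma\fr 0}$, and the $\ad$- and $\phi_\alpha$-cases are identical or trivial. Since true positions are exactly the $\sigma$ with $x\in D_\sigma$, and we have shown $D'_\sigma=D_\sigma$, the flowcharts $\bfS$ and $\bfS'$ have literally the same true positions, hence the same true paths, hence $\eval{\bfS'}=\eval{\bfS}$ as (possibly multi-valued) functions; in particular $\bfS'$ is deterministic, or total, exactly when $\bfS$ is. Finally $\bfS'$ is monotone: for a $\mul$-node, $D'_{\sigma\fr 1}=S'_\sigma$ by construction, and for an $\ad$-node, $D'_{\sigma\fr n}=S'_{\sigma,n}$.

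For effectivity, the construction of $S'_\sigma$ from $S_\sigma$ and from $(S_\tau)_{\tau\prec\sigma}$ proceeds by a recursion of length $\mathrm{ht}(\syn_t)<\om_1$ using only finite Boolean combinations and countable intersections of the given $\tpbf{\Sigma}^0_\xi$-sets; since the standard codings of the $\tpbf{\Sigma}^0_\xi$ classes are closed under these operations uniformly (and the syntax tree itself is given), a code for $\bfS'$ is obtained uniformly from a code for $\bfS$, which is all "effectively find" requires. The main obstacle I anticipate is purely bookkeeping: making sure the complexity estimate on $D'_\sigma$ is stated with the correct Borel rank, i.e. confirming that normality of $t$ forces $\rank(\sigma\fr 0)=\rank(\sigma\fr 1)=\rank(\sigma)$ at $\mul$- and $\ad$-nodes so that intersecting $S_\sigma$ (which is $\tpbf{\Sigma}^0_{\rank(\sigma)}$) with $D'_\sigma$ does not push us out of $\tpbf{\Sigma}^0_{\rank(\sigma)}$ — this is exactly the role of the normality hypothesis and the place to be careful, whereas everything else is a direct induction along $\syn_t$.
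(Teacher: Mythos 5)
Your construction is exactly the paper's: you set $S'_\sigma=S_\sigma\cap D'_\sigma$ with $D'_\sigma=D_\sigma$ the domain assignment, and your verifications that $D'=D$, that true positions (hence true paths and $\eval{\cdot}$) are unchanged, and that the result is monotone are all correct. The gap is in the complexity check, which is the only non-routine step, and it is precisely the step you flagged as "the place to be careful." You claim that the recursion for $D'$ "only intersects" sets of complexity $\tpbf{\Sigma}^0_{\rank(\sigma)}$ between consecutive Veblen nodes, and that the role of normality is to force $\rank(\sigma\fr 0)=\rank(\sigma\fr 1)=\rank(\sigma)$ at $\mul$- and $\ad$-nodes. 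Both points are off. The equality of ranks at successors of non-Veblen nodes is automatic from the definition of Borel rank (only passing a Veblen node changes the rank) and has nothing to do with normality. More importantly, the recursion does \emph{not} only intersect: at a $\mul$-node $\sigma$ the $0$-branch is a set difference, $D'_{\sigma\fr 0}=D'_\sigma\setminus S'_\sigma$, and $\tpbf{\Sigma}^0_{\rank(\sigma)}$ is not closed under complements, so this set is in general only $\tpbf{\Sigma}^0_{\rank(\sigma)+1}$. Your inductive claim that $D'_\sigma\in\tpbf{\Sigma}^0_{\rank(\sigma)}$ therefore fails at $0$-successors of $\mul$-nodes; and if such a successor could itself be a $\mul$- or $\ad$-node of the same rank, the set $S_\tau\cap D'_\tau$ assigned there would genuinely fall outside $\tpbf{\Sigma}^0_{\rank(\tau)}$ and $\bfS'$ would not be a flowchart. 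This is not a hypothetical worry: for a non-normal term such as $(s\mul u)\mul v$ the construction really can break.

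Normality is what rules this out, but in a different way than you describe: it guarantees that the $0$-successor of a $\mul$-node is either a leaf (nothing to check) or a node labeled $\phi_\alpha$. In the latter case, before $D'_{\sigma\fr 0}$ is ever intersected with another assigned set $S_\tau$, one crosses that Veblen node, and the rank of the next branching node $\tau$ below it satisfies $\rank(\tau)\geq\rank(\sigma)+\om^\alpha\geq\rank(\sigma)+1$, which absorbs the extra "$+1$" incurred by the complement; hence $D'_\tau=D'_{\sigma\fr 0}\in\tpbf{\Sigma}^0_{\rank(\tau)}$ and $S'_\tau=S_\tau\cap D'_\tau\in\tpbf{\Sigma}^0_{\rank(\tau)}$ as required. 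With the induction restated and closed this way (which is how the paper argues), the remainder of your proof, including the effectivity remark, goes through.
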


\begin{proof}
Let $(D_\sigma)_{\sigma\in\syn_t}$ be the domain assignment for $\bfS$.
We inductively show that, if $\sigma\in\syn_t$ is not a leaf, then $D_\sigma$ is a $\tpbf{\Sigma}_{\rank(\sigma)}$ subset of $X$.
If $\sigma$ is labeled by $\ad$, then $D_{\sigma\fr n}=D_\sigma\cap S_{\sigma,n}$ is a $\tpbf{\Sigma}_{\rank(\sigma)}$ subset of $X$ since both $D_\sigma$ and $S_{\sigma,n}$ are $\tpbf{\Sigma}_{\rank(\sigma)}$ by induction hypothesis.
Since $\rank(\sigma)\leq\rank(\sigma\fr n)$, we get $D_{\sigma\fr n}\in \tpbf{\Sigma}_{\rank(\sigma\fr n)}$.
If $\sigma$ is labeled by $\mul$, then $D_{\sigma\fr 1}=D_\sigma\cap S_{\sigma,1}$ is $\tpbf{\Sigma}_{\rank(\sigma\fr n)}$ as above.
One can also see that $D_{\sigma\fr 0}=D_\sigma\setminus S_{\sigma}$ is a $\tpbf{\Sigma}_{\rank(\sigma)+1}$ subset of $X$.
By normality of $t$, $\sigma\fr 0$ is either a leaf or labeled by $\phi_\alpha$.
If $\sigma\fr 0$ is a leaf, there is nothing to do.
If $\sigma\fr 0$ is labeled by $\phi_\alpha$, we have $\rank(\sigma)+1\leq\rank(\sigma\fr 0)$, and therefore, we get $D_{\sigma\fr 0}\in\tpbf{\Sigma}_{\rank(\sigma\fr 0)}$.

If $\sigma$ is labeled by $\mul$, define $S'_\sigma=D_\sigma\cap S_\sigma$, which is $\tpbf{\Sigma}^0_{\rank(\sigma)}$; and if $\sigma$ is labeled by $\ad$, define $S'_{\sigma,n}=D_\sigma\cap S_{\sigma,n}$, which is $\tpbf{\Sigma}^0_{\rank(\sigma)}$.
Hence, $(\bfS'_\sigma)_{\sigma\in\syn_t}$ gives a monotone flowchart on $t$, which clearly satisfies $\eval{\bfS}=\eval{\bfS'}$.
\end{proof}

We next see that, if the space $X$ is zero-dimensional, then for any node $\sigma\in\syn_t$ labeled by $\sqcup$, the assigned sequence $(U_{\sigma,n})_{n\in\om}$ can be pairwise disjoint.
Note that this property is required in the original definition of $\tpbf{\Sigma}_t$ (Definition \ref{def:K-M-Sigma2}).
Such a flowchart may be called a reduced flowchart, following the terminology in \cite{Sel20}.
This property is particularly useful when combining more than one deterministic flowcharts to make a new deterministic flowchart; see also the proof of Theorem \ref{thm:decomposition}.

\begin{prop}\label{prop:zero-dim-partition}
Let $X$ be a subspace of a zero-dimensional Polish space, and let $\bfS$ be a flowchart on an $\lang(Q)$-term $t$ over $X$.
Then, in a hyperarithmetical manner, one can find a flowchart $\bfS'$ on $t$ such that for any node $\sigma\in\syn_t$ labeled by $\sqcup$, the assigned sequence $(U_{\sigma,n})_{n\in\om}$ is pairwise disjoint, and $\bfS'$ determines the same function as $\bfS$. 
\end{prop}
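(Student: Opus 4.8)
The statement asks us to convert a flowchart $\bfS$ into one whose branching sets at $\sqcup$-nodes are pairwise disjoint, without changing the evaluated (multi)function, and to do so hyperarithmetically. The natural strategy is to process the syntax tree $\syn_t$ node by node, and at each node $\sigma$ labeled by $\sqcup$ replace the sequence $(U_{\sigma,n})_{n\in\om}$ by a reduced version $(U'_{\sigma,n})_{n\in\om}$ in the usual descriptive-set-theoretic sense: set
\[
U'_{\sigma,n}=U_{\sigma,n}\setminus\bigcup_{m<n}U_{\sigma,m}.
\]
Then $(U'_{\sigma,n})_{n\in\om}$ is pairwise disjoint, $\bigcup_n U'_{\sigma,n}=\bigcup_n U_{\sigma,n}$, and each $U'_{\sigma,n}$ is a Boolean combination of finitely many sets of complexity $\tpbf{\Sigma}^0_{\rank(\sigma)}$; since $\rank(\sigma)\ge 1$, the difference $U_{\sigma,n}\setminus\bigcup_{m<n}U_{\sigma,m}$ is again $\tpbf{\Sigma}^0_{\rank(\sigma)}$ (this is where we use that a $\tpbf{\Sigma}^0_\xi$ class is closed under finite intersections with $\tpbf{\Pi}^0_\xi$ sets for $\xi\ge 1$), so $\bfS'$ is a legitimate flowchart on $t$ over $X$.

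**Main subtlety.** Reducing the $\sqcup$-sequence at $\sigma$ changes the domain assignment below $\sigma$: with the original flowchart, $x$ enters branch $\sigma\fr n$ iff $\val$-position reasoning gives $x\in D_\sigma\cap U_{\sigma,n}$, whereas now it enters iff $x\in D_\sigma\cap U'_{\sigma,n}$, i.e. iff $x\in D_\sigma\cap U_{\sigma,n}$ and $x\notin U_{\sigma,m}$ for $m<n$. So the sets $D_{\sigma\fr n}$ shrink, and the outputs could in principle change — but here is the key observation: in the non-deterministic semantics, $\eval{\bfS}(x)$ is the \emph{set} of labels of true paths for $x$, and after reduction every true path that survives is still a true path, while the paths we lose through $\sigma\fr n$ (for $x\in U_{\sigma,m}\cap U_{\sigma,n}$, $m<n$) are duplicated by the surviving true path through $\sigma\fr m$ \emph{provided the subtrees below $\sigma\fr m$ and $\sigma\fr n$ produce the same label-set on such $x$}. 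This is not automatic. The clean fix is to first pass, via Lemma \ref{lem:monotone}, to a monotone flowchart, so that $D_{\sigma\fr n}=S_{\sigma,n}$; then reduce; then argue that monotonicity plus the reduction preserves $\eval{\cdot}$. Actually even simpler: one should work downward recursively — having reduced at $\sigma$, one re-derives the domain assignment and continues reducing at the $\sqcup$-descendants using the \emph{new} $D$'s, and verifies by induction on $\syn_t$ (from the root) that the multifunction $x\mapsto\{$labels of true paths for $x$ in the subtree rooted at $\sigma\}$, restricted to the current $D_\sigma$, is unchanged. The induction step at a $\sqcup$-node is where the reduction bites, and the point is that for $x\in D_\sigma$, the set $\bigcup_n\{\text{outputs via }\sigma\fr n\}$ equals $\bigcup_n\{\text{outputs via }\sigma\fr n\text{ in }\bfS'\}$ because the union of ranges is over the same union of domains $\bigcup_n U_{\sigma,n}\cap D_\sigma$, just reshuffled into disjoint pieces; since every $x$ in that union lies in $U'_{\sigma,n}$ for exactly one $n$ (namely the least $n$ with $x\in U_{\sigma,n}$) and that $n$ is one of the original $n$'s with $x\in U_{\sigma,n}$, the surviving true path through $\sigma\fr n$ was already a true path in $\bfS$. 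Conversely any $\bfS'$-true-path is an $\bfS$-true-path since $U'_{\sigma,n}\subseteq U_{\sigma,n}$. Hence $\eval{\bfS'}(x)\subseteq\eval{\bfS}(x)$ always, and equality on $\bigcup_n U_{\sigma,n}\cap D_\sigma$ follows from the least-index remark; on $D_\sigma\setminus\bigcup_n U_{\sigma,n}$ nothing changes.

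**Effectivity.** A code of $\bfS$ is a code of $\syn_t$ together with $(c_\sigma)_{\sigma\in{\tt S}_t}$, where $c_\sigma$ for a $\sqcup$-node is a sequence of $\tpbf{\Sigma}^0_{\rank(\sigma)}$-codes of $(S_{\sigma,n})_n$. The map $(c_{\sigma,n})_n\mapsto(c'_{\sigma,n})_n$, where $c'_{\sigma,n}$ is a $\tpbf{\Sigma}^0_{\rank(\sigma)}$-code of $U_{\sigma,n}\setminus\bigcup_{m<n}U_{\sigma,m}$, is computable uniformly in the code of $\syn_t$ (which supplies $\rank(\sigma)$ via the Veblen initial segments), using the standard uniform closure of $\tpbf{\Sigma}^0_\xi$ under finite unions and under intersection with $\tpbf{\Pi}^0_\xi$ sets — all of this is uniformly hyperarithmetic in the code of $t$, matching Observation on Borel rank and the coding conventions in Section \ref{sec:flowchart}. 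Thus $\bfS\mapsto\bfS'$ is given by a $\Delta^1_1$ (indeed arithmetic relative to the code of $\syn_t$) operation on codes, which is what "in a hyperarithmetical manner" demands.

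**Anticipated main obstacle.** The genuine content is the preservation-of-evaluation argument at a $\sqcup$-node, i.e. checking that collapsing overlaps into a disjoint sequence does not destroy any output value of the (possibly non-deterministic) evaluation; the least-index observation above handles it, but one must be careful to carry the induction with the \emph{updated} domain assignment $D_\sigma$ as one descends, rather than re-using the domains of the original $\bfS$. Everything else — the complexity bookkeeping and the closure properties — is routine given the machinery already set up.
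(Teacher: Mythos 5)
Your reduction $U'_{\sigma,n}=U_{\sigma,n}\setminus\bigcup_{m<n}U_{\sigma,m}$ does not preserve the complexity that the definition of a flowchart requires, and the closure property you invoke to justify it is false: $\tpbf{\Sigma}^0_\xi$ is \emph{not} closed under intersection with $\tpbf{\Pi}^0_\xi$ sets (for $\xi=1$, an open set intersected with a closed set is merely locally closed; for $\xi=2$, the difference of two $\tpbf{\Sigma}^0_2$ sets can be $\tpbf{\Pi}^0_2$-complete, e.g.\ the whole space minus a countable dense set). So the sets you assign to a $\sqcup$-node of Borel rank $\xi$ land in $D_2(\tpbf{\Sigma}^0_\xi)\subseteq\tpbf{\Delta}^0_{\xi+1}$ rather than in $\tpbf{\Sigma}^0_\xi$, and $\bfS'$ is not a flowchart on $t$. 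The fact that your argument never uses the zero-dimensionality hypothesis is the telltale sign: that hypothesis is exactly what makes the $\xi=1$ case work. The correct tool is the (generalized) reduction property of $\tpbf{\Sigma}^0_\xi$ for $\omega$-sequences --- equivalently, number uniformization for $\Sigma^0_\xi$ --- which the paper applies in its effective form (Moschovakis 3E.10): one takes a $\Delta^1_1$ code $\ep$ of the $\tpbf{\Sigma}^0_\xi$ set $R=\{(x,n):x\in R_n\}$, uniformizes $R$ in the integer coordinate within $\Sigma^0_\xi(\ep)$, and reads off the disjoint sets $R^\ast_n$ as sections. The underlying classical proof performs your difference trick one level down, on $\tpbf{\Pi}^0_{<\xi}$ (or clopen, when $\xi=1$) pieces of the $U_{\sigma,n}$, which is why the pieces stay $\tpbf{\Delta}^0_\xi$ and their countable unions stay $\tpbf{\Sigma}^0_\xi$. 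This uniformization step also delivers the hyperarithmetical effectivity directly, so your separate effectivity paragraph would then be subsumed.

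Your lengthy discussion of preserving the evaluation is not where the content of this proposition lies (the paper treats it as immediate, since the reduced sets are subsets of the originals with the same union, so true paths can only be pruned, never created), but it is not wrong in spirit. The fatal problem is solely the complexity bookkeeping described above.
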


\begin{proof}
It suffices to show that, for any $\xi<\om_1^{\rm CK}$, given a uniform $\Delta^1_1$-sequence of $\tpbf{\Sigma}^0_\xi$ sets $(R_n)_{n\in\om}$, one can effectively find a $\Delta^1_1$-sequence of pairwise disjoint $\tpbf{\Sigma}^0_\xi$ sets such that $R^\ast_n\subseteq R_n$ for each $n\in\om$ and $(R_n^\ast)_{n\in\om}$ and $\bigcup_{n\in\om}R_n^\ast=\bigcup_{n\in\om}R_n$.
To see this, let $R$ be the $\tpbf{\Sigma}^0_\xi(\Delta^1_1)$ set such that $(x, n)\in R$ if and only if $x\in R_n$.
If $P$ is a universal $\Sigma^0_\xi$ set, there exists a $\Delta^1_1$ element $\ep\in\Baire$ such that $(x,n)\in R$ if and only if $(\ep,x,n)\in P$.
By the uniformization property on $\Sigma^0_\xi(\ep)$ (see \cite[3E.10]{Mos09}), we have some $R^\ast\in\Sigma^0_\xi(\ep)$ which uniformizes $R$.
Now, let $R^\ast_n$ be the $\Sigma^0_\xi(\ep)$ set such that $x\in R^\ast_n$ if and only if $(x,n)\in R^\ast$.
Then, $(R^\ast_n)_{n\in \omega}$ has the required property.
\end{proof}

\subsection{Translation}

Now, for an $\lang(Q)$-term $t$, we will show that the four definitions of $\tpbf{\Sigma}_t$ introduced so far all define the same class of functions.
\[\tpbf{\Sigma}_t(\Baire)=\tpbf{\Sigma}^W_t(\Baire)=\tpbf{\Sigma}^{W\circ}_t(\Baire)=\tpbf{\Sigma}'_t(\Baire).\]

Note that the equivalence $\tpbf{\Sigma}_t(\Baire)=\tpbf{\Sigma}_t'(\Baire)$ has also been shown in \cite[Theorem 4.10]{Sel20}, although the terminology is slightly different.
To prove the equivalence, it suffices to show the following:

\begin{theorem}\label{prop:matryo-flow}
Let $t$ be an $\lang(Q)$-term, and assume that $Z=\om^\om$.
\begin{enumerate}
\item Given a command $\bfU$ on $t$ over $Z$, one can effectively find a flowchart $\bfS$ on $t$ over $Z$ such that $\eval{\bfU}=\eval{\bfS}$.
\item Conversely, given a flowchart $\bfS$ on $t$ over $Z$, one can effectively find a simple command $\bfU$ on $t$ over $Z$ such that $\eval{\bfS}=\eval{\bfU}$.
If $\bfS$ is total, then $\bfU$ is total and simple, and moreover one can effectively find a strongly total command $\bfU'$ on $t$ over $Z$ such that $\eval{\bfS}=\eval{\bfU'}$. 
\end{enumerate}
\end{theorem}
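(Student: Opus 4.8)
The plan is to prove the two directions of Theorem~\ref{prop:matryo-flow} by simultaneous induction on the syntax tree $\syn_t$, propagating the translation down each branch while carrying along the accumulated reassignment map as extra bookkeeping. For part~(1), the key idea is \emph{reassignment-elimination by precomposition}: given a command $\bfU=(\comU_\sigma,\comr_\sigma)_{\sigma\in\syn_t}$, recall from the definition of $\val_\sigma$ that the value stored in the input variable upon reaching $\sigma$ is $\val_\sigma(x)$, and by Observation~\ref{obs:val-measurable} the map $\val_\sigma$ is $\tpbf{\Sigma}^0_{\rank(\sigma)}$-measurable with $\tpbf{\Sigma}^0_{\rank(\sigma)}$ domain. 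So I would simply define the flowchart $\bfS=(S_\sigma)_{\sigma\in\syn_t}$ by pulling the branching sets back along $\val_\sigma$: if $\sigma$ is labeled by $\mul$, set $S_\sigma=\val_\sigma^{-1}[\comU_\sigma]$; if $\sigma$ is labeled by $\ad$, set $S_{\sigma,n}=\val_\sigma^{-1}[\comU_{\sigma,n}]$; and $S_\sigma=X$ when $\sigma$ is a leaf or labeled by $\phi_\alpha$. Since $\comU_\sigma$ is open and $\val_\sigma$ is $\tpbf{\Sigma}^0_{\rank(\sigma)}$-measurable, each $S_\sigma$ is $\tpbf{\Sigma}^0_{\rank(\sigma)}$, as required by the definition of a flowchart. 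Then one checks by induction along $\syn_t$ that $\sigma$ is a true position for $x$ with respect to $\bfU$ if and only if it is a true position for $x$ with respect to $\bfS$; the $\mul$ and $\ad$ cases are exactly the statement $\val_\sigma(x)\in\comU_\sigma\iff x\in S_\sigma$, and the $\phi_\alpha$ and leaf cases are trivial. Hence the true paths, and therefore the evaluations, coincide. All constructions are effective in the given codes because the standard coding machinery for $\tpbf{\Sigma}^0_\xi$-measurable functions and preimages of open sets is effective.

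For part~(2), the converse, the point is to go the other way: turn a conditional-branching-only flowchart into a command that performs \emph{no reassignments except those forced by the $\phi_\alpha$-nodes}. I would do this by using the embeddings ${\tt in}_V$ and retractions ${\tt out}_V$ described before Definition~\ref{def:K-M-Sigma2} to pre-process the $\tpbf{\Sigma}^0_{\rank(\sigma)}$ branching sets of $\bfS$ into a form where the branching can be read off by a command. Concretely, first apply Lemma~\ref{lem:monotone} to replace $\bfS$ by an equivalent monotone flowchart, so that $D_{\sigma\fr i}$ equals the assigned set at each non-$\phi_\alpha$ split. Then work down $\syn_t$: at a $\phi_\alpha$-node use the reassignment slot $\comr_{\sigma\fr 0}$ of the command (a $\tpbf{\Sigma}^0_{1+\om^\alpha}$-measurable function) to ``realize'' the passage through a Borel-rank jump, exactly as in Definition~\ref{def:K-M-Sigma}(4); between consecutive Veblen initial segments, the relevant branching sets have rank exactly $\rank(\sigma)$ in the sub-level determined by the last $\phi_\alpha$ passed, and one can express membership in such a set as a continuous condition after composing with the reassignment carried from that last $\phi_\alpha$-node — this is the heart of Louveau-style analysis and is where the structure of the Borel hierarchy within each $\tpbf{\Sigma}^0_{1+\om^\alpha}$-layer is used. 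The resulting command is simple because, away from $\phi_\alpha$-nodes, $\comr_{\sigma\fr i}$ can be taken to be the identity: the nontrivial data has been absorbed into the open branching sets $\comU_\sigma$ recomputed along the way. Finally, when $\bfS$ is total, monotonicity gives that $(S_{\sigma,n})_n$ covers $D_\sigma$ at every $\ad$-node, so the associated command is total; to upgrade to \emph{strongly} total, apply Proposition~\ref{prop:zero-dim-partition} in $\om^\om$ to replace the covering family by a clopen partition refining it (so the $\comU_{\sigma,n}$ genuinely cover all of $Z$, not merely $D_\sigma$), which is possible precisely because $Z=\om^\om$ is zero-dimensional.

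The main obstacle I anticipate is in part~(2): expressing a $\tpbf{\Sigma}^0_{\rank(\sigma)}$ branching set as a condition realizable by a command whose only nontrivial reassignments occur at $\phi_\alpha$-nodes. This requires the familiar but delicate fact that a $\tpbf{\Sigma}^0_{1+\om^\alpha}$ set can be written as the preimage of an open set under a $\tpbf{\Sigma}^0_{1+\om^\alpha}$-measurable map — and, crucially, that these maps can be organized coherently along the whole syntax tree so that the rank budget is respected at each node (i.e., the reassignment inserted at a $\phi_\alpha$-node with label rank $\om^\alpha$ is enough to trivialize all branching sets of rank $\rank(\sigma)$ for $\sigma$ below it but above the next Veblen node). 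Keeping the bookkeeping of ranks aligned with the additive decomposition $\rank(\sigma)=1+\om^{\alpha_0}+\dots+\om^{\alpha_\ell}$, and verifying that $\eval{\bfS}=\eval{\bfU}$ as (possibly multi-valued) functions throughout — including that no spurious true paths are introduced — is where the real work lies. Everything else (effectivity, the leaf and $\phi_\alpha$ base cases, the totality and strong-totality upgrades) is routine given the lemmata already established.
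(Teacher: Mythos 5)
Part (1) of your proposal is exactly the paper's argument: pull back the branching sets along $\val_\sigma$, invoke Observation \ref{obs:val-measurable} for the rank bound, and check by induction that true positions coincide. No issues there.

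Part (2) has two genuine gaps. First, the obstacle you correctly name --- producing, at each $\phi_\alpha$-node, a \emph{single} $\tpbf{\Sigma}^0_{1+\om^\alpha}$-measurable reassignment that simultaneously turns \emph{every} branching set below that node (and above the next Veblen node) into the preimage of an open set --- is the central technical content of the direction, and you leave it unresolved. It is not enough that each individual $\tpbf{\Sigma}^0_{1+\om^\alpha}$ set is the preimage of an open set under \emph{some} such map; you need one map that works for all of them at once, coherently down the whole tree. The paper achieves this by taking the reassignment at a $\phi_\alpha$-node to be a fixed \emph{universal} map, namely the $\om^\alpha$-th Turing jump operator $j_\alpha$ with true stages (relative to an oracle making all the assigned sets lightface); universality of the jump is exactly what lets one write $S_\sigma=(j_{\alpha_\ell}\circ\dots\circ j_{\alpha_0})^{-1}[U_\sigma]$ for an open $U_\sigma$, uniformly for every $\sigma$. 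Without identifying such a universal map your construction does not go through.

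Second, your upgrade to strong totality is wrong. Proposition \ref{prop:zero-dim-partition} refines a family to a pairwise disjoint one \emph{with the same union}; it cannot enlarge the union from (the image of) $D_\sigma$ to all of $Z$. The real difficulty is that at an $\ad$-node the open sets $\comU_{\sigma,n}$ only need to cover the set of values $\val_\sigma(x)$ actually reached, and to extend the cover to the rest of $Z$ by throwing the complement into, say, $\comU_{\sigma,0}$, one needs that complement to be open --- i.e., one needs the reachable set to sit inside a \emph{closed} set $Z_\sigma$. The paper's entire Step 2 (the inductively built homeomorphisms $\iota_\sigma\colon D_\sigma\simeq Z_\sigma$ onto closed subsets, using that $j_\alpha$ has closed image, admits a computable left-inverse, and is uniformly order preserving, together with the ${\tt in}_{\hat V}$ re-embeddings) exists precisely to arrange this, and then sets $U'_{\sigma,0}=\hat V'_{\sigma,0}\cup(Z\setminus Z_\sigma)$. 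Your proposal contains no substitute for this mechanism.
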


\begin{proof}
(1) Given a command $\bfU=(\comU_\sigma,\comr_\sigma)_{\sigma\in\syn_t}$, we construct a flowchart $\bfS=(S_\sigma)_{\sigma\in\syn_t}$ as follows:
\begin{itemize}
\item For a leaf $\sigma\in\syn_t$, then $S_\sigma=Z$.
\item If $\sigma\in\syn_t$ is labeled by $\mul$, then define $S_\sigma=\val_\sigma^{-1}[U_\sigma]$.
\item If $\sigma\in\syn_t$ is labeled by $\ad$, then define $S_{\sigma,n}=(\val_\sigma^{-1}[U_{\sigma,n}])_{n\in\om}$.
\item If $\sigma\in\syn_t$ is labeled by $\phi_\alpha$, then $S_\sigma=Z$.
\end{itemize}

Then, $S_\sigma$ and $S_{\sigma,n}$ are $\tpbf{\Sigma}^0_{{\rm rank}(\sigma)}$ subsets of $Z$ since ${\tt val}_\sigma$ is a $\tpbf{\Sigma}^0_{{\rm rank}(\sigma)}$-measurable function with a $\tpbf{\Sigma}^0_{{\rm rank}(\sigma)}$ domain by Observation \ref{obs:val-measurable}.
Hence, $\bfS$ is a flowchart on the $\lang(Q)$-term $t$.
Then $\bfU$ and $\bfS$ determine the same function, i.e., $\eval{\bfU}=\eval{\bfS}$.
This is because, by definition, if $\sigma$ is labeled by $\mul$, then $x\in S_\sigma$ if and only if $\val_\sigma(x)\in U_\sigma$; and if $\sigma$ is labeled by $\ad$, then $x\in S_{\sigma,n}$ if and only if $\val_\sigma(x)\in U_{\sigma,n}$.
By induction, this trivially ensures that $\sigma$ is a true position for $x$ with respect to $\bfS$ if and only if $\sigma$ is a true position for $x$ with respect to $\bfU$.
In particular, $\sigma\in\syn_t$ is a true path for $x$ with respect to $\bfS$ if and only if $\sigma$ is a true path for $x$ with respect to $\bfU$.
This means that $\eval{\bfS}(x)=q$ if and only if $\eval{\bfU}(x)=q$.

\medskip

(2)
First, we fix a sufficiently strong oracle $\delta$, relative to which all the Borel sets attached to the given flowchart $\bfS$ be lightface.
For any $\lang(Q)$-term $t$, its syntax tree is countable, and therefore, only countably many Borel sets are attached; hence such an oracle $\delta$ exists.
Then, for any $\delta$-computable ordinal $\alpha$, let us consider the $\om^\alpha$-th Turing jump operator $j_\alpha:=\mathcal{J}^{\om^\alpha,\delta}\colon Z\to Z$ with true stages relative to $\delta$; see Kihara-Montalb\'an \cite[Sections 4.1 and 6.1]{KM19}.
For the sake of brevity, hereafter, $\delta$ will be omitted from the notation.
One of the key properties of the jump operator $j_\alpha$ with true stages is that its image $j_\alpha[Z]$ is closed.
Moreover, a single index ensures the inequality $x\leq_Tj_\alpha(x)$ for any $x$, and therefore, $j_\alpha$ has a computable left-inverse $j_\alpha^{-1}\colon j_\alpha[Z]\to Z$.
By combining these properties, observe that, if $A\subseteq Z$ is closed, so is $j_\alpha[A]$.

\medskip

{\bf Step 1.}
We first transform a flowchart $\bfS=(S_\sigma)_{\sigma\in\syn_t}$ into a command $\bfU$.
First we define $(u_\sigma)_{\sigma\in\syn_t}$ as follows:
If $\sigma\in\syn_t$ is labeled by $\mul$ or $\ad$ then $u_{\sigma\fr n}={\rm id}$ for each $n$; and if $\sigma$ is labeled by $\phi_\alpha$ then $u_{\sigma\fr 0}=j_\alpha$.
Then we define a partial function $\val_\sigma$ as before.
Let $(\tau_i)_{i<\ell}$ be the Veblen initial segments of a node $\sigma\in\syn_t$, where $\tau_i$ is labeled by $\phi_{\alpha_i}$.
Then, note that
\[\val_\sigma(x)=j_{\alpha_\ell}\circ j_{\alpha_{\ell-1}}\circ\dots\circ j_{\alpha_1}\circ j_{\alpha_0}(x).\]

Next, we define $(U_\sigma)_{\sigma\in\syn_t}$ as follows:
If $\sigma$ is labeled by $\ad$, since $S_\sigma$ is a $\bS_{\rank(\sigma)}$ set, by universality of jump operators, one can effectively find an open set $U_\sigma$ such that
\[S_\sigma=(j_{\alpha_\ell}\circ j_{\alpha_{\ell-1}}\circ\dots\circ j_{\alpha_1}\circ j_{\alpha_0})^{-1}[U_\sigma].\]

Similarly, if $\sigma$ is labeled by $\mul$, for any $n$, one can effectively find an open set $U_{\sigma,n}$ such that $S_{\sigma,n}=(j_{\alpha_\ell}\circ j_{\alpha_{\ell-1}}\circ\dots\circ j_{\alpha_1}\circ j_{\alpha_0})^{-1}[U_{\sigma,n}]$.
If $\sigma$ is labeled by $\phi_\alpha$, then put $U_\sigma=Z$.
Then, $\bfU=(U_\sigma,u_\sigma)_{\sigma\in\syn_t}$ gives a simple command on the $\lang(Q)$-term $t$.

\begin{claim}\label{claim:transform1}
$\bfS$ and $\bfU$ determine the same function, i.e., $\eval{\bfS}=\eval{\bfU}$.
\end{claim}

\begin{proof}
By definition, if $\sigma$ is labeled by $\mul$, then $x\in S_\sigma$ if and only if $\val_\sigma(x)\in U_\sigma$; and if $\sigma$ is labeled by $\ad$, then $x\in S_{\sigma,n}$ if and only if $\val_\sigma(x)\in U_{\sigma,n}$.
Therefore, as in the proof of the item (1), one can easily see that $\eval{\bfS}(x)=q$ if and only if $\eval{\bfU}(x)=q$.
\end{proof}

\begin{claim}
If $\bfS$ is total, so is $\bfU$.
\end{claim}

\begin{proof}
Assume that $\sigma$ is a true position for $x$ with respect to $\bfS$, and $\sigma$ is labeled by $\ad$.
By the proof of Claim \ref{claim:transform1}, this implies that $\sigma$ is also a true position for $x$ with respect to $\bfU$.
Since $\bfS$ is total, there exists $n\in\om$ such that $\sigma\fr n$ is a true position for $x$ with respect to $\bfS$.
This means that $x\in S_{\sigma,n}$ for some $n\in\om$.
By definition, $x\in S_{\sigma,n}$ if and only if $\val_\sigma(x)\in U_{\sigma,n}$.
Thus, $\sigma\fr n$ is a true position for $x$ with respect to $\bfU$.
By induction, this shows that $\bfU$ is total.
\end{proof}

{\bf Step 2.}
Next we transform the above total commend $\bfU$ into a strongly total command $\bfU'$.
We define the domain assignment of the command $\bfU=(U_\sigma)_{\sigma\in\syn_t}$ as follows:
\begin{enumerate}
\item For the root $\langle\rangle$ of $\syn_t$, define $D_{\langle\rangle}=Z$.
\item If $\sigma$ is labeled by $\mul$, then define $D_{\sigma\fr 0}=D_\sigma$ and $D_{\sigma\fr 1}=D_\sigma\cap U_\sigma$.
\item If $\sigma$ is labeled by $\ad$, then define $D_{\sigma\fr n}=D_\sigma\cap U_{\sigma,n}$.
\item If $\sigma$ is labeled by $\phi_\alpha$, then define $D_{\sigma\fr 0}=j_\alpha[D_\sigma]$.
\end{enumerate}

Note that we always have $\val_\sigma(x)\in D_\sigma$ whenever $\sigma$ is a true position for $x$ with respect to $\bfU$.
We will inductively define a computable homeomorphism $\iota_\sigma\colon D_\sigma\simeq Z_\sigma$, where $Z_\sigma$ is a closed subset of $Z$.

To describe our construction, first we note that the Turing jump operator has the so-called uniformly order preserving (UOP) property:
There exists a computable function $p\colon\om\to\om$ such that
\[x\leq_Ty\mbox{ via $e$}\implies j_\alpha(x)\leq_Tj_\alpha(y)\mbox{ via $p(e)$}.\]

Assume that $\iota\colon A\simeq B$ is a computable homeomorphism, and let $(d,e)$ be a pair of indices of $\iota$ and $\iota^{-1}$.
Then, by the UOP property of the jump, the pair $(p(d),p(e))$ gives a computable homeomorphism $\iota^\star\colon j_\alpha[A]\simeq j_\alpha[B]$ such that $j_\alpha\circ\iota(x)=\iota^\star\circ j_\alpha(x)$ for any $x\in A$.


We also recall that, for an open set $U\subseteq Z$, ${\tt in}_U\colon Z\simeq U$ denotes a homeomorphism between $Z$ and $U$.
For a function $f\colon A\to B$ and $C\subseteq B$, we use the symbol $f\coupto C$ to denote $f\upto f^{-1}[C]$, i.e., the restriction of $f$ whose codomain is $C$.
For a subspace $Y$ of $Z$, if $U$ is open in $Y$ then there is an open set $\hat{U}$ in $Z$ such that $U=\hat{U}\cap Y$.
Then, the homeomorphism ${\tt in}_{\hat{U}}\colon Z\simeq\hat{U}$ induces another homeomorphism ${\tt in}_{\hat{U}}\coupto Y\colon {\tt in}_{\hat{U}}^{-1}[Y]\simeq U$.
Note that if $Y$ is closed in $Z$, so is the domain ${\tt in}_{\hat{U}}^{-1}[Y]$.
In the following argument, we may think of ${\tt in}_{\hat{U}}^{-1}$ as a magnifying glass which enlarges $\hat{U}$ to the size of the whole space $Z$, and ${\tt in}_{\hat{U}}^{-1}[Y]$ as the view of $U=\hat{U}\cap Y$ under this scale.

Now, let us start the construction of $\iota_\sigma\colon D_\sigma\simeq Z_\sigma$.
First put $\iota_\ep={\rm id}$ and $Z_\ep=Z$, and assume that $\iota_\sigma$ and $Z_\sigma$ has already been defined.

\medskip

{\bf Case 1.}
If $\sigma$ is labeled by $\mul$, then put $V'_\sigma=\iota_\sigma[D_\sigma\cap U_\sigma]$, and note that, by the induction hypothesis, $i_\sigma\colon D_\sigma\simeq Z_\sigma$ is a homeomorphism, so $V'_\sigma$ is open in $Z_\sigma$ since $U_\sigma$ is open.
Then, there exists an open set $\hat{V}'_\sigma$ in $Z$ such that $V'_\sigma=\hat{V}'_\sigma\cap Z_\sigma$.
Then put
\begin{align*}
Z_{\sigma\fr 1}={\tt in}^{-1}_{\hat{V}'_\sigma}[Z_\sigma],
& &
u'_{\sigma\fr 1}=\left({\tt in}_{\hat{V}'_\sigma}\coupto Z_\sigma\right)^{-1}\colon V'_\sigma\simeq Z_{\sigma\fr 1}.
\end{align*}

As mentioned above, $Z_{\sigma\fr 1}$ is closed in $Z$.
Then define $\iota_{\sigma\fr 1}$ as the restriction of $u'_{\sigma\fr 1}\circ\iota_\sigma$ up to $U_\sigma$.
Since $D_{\sigma\fr 1}=D_\sigma\cap U_\sigma$, the map $\iota_{\sigma\fr 1}\colon D_{\sigma\fr 1}\simeq Z_{\sigma\fr 1}$ is a homeomorphism.
Diagrammatically, this argument may be described as follows:
\[
\xymatrix{
D_\sigma \ar[rr]^{\sim}_{\iota_\sigma}  & & Z_\sigma & &  \\
D_\sigma\cap U_\sigma \ar[rr]^{\sim}_{\iota_\sigma\upto U_\sigma} \ar@{^{(}->}[u] & & \hat{V}_\sigma'\cap Z_\sigma  \ar@{^{(}->}[d] \ar@{^{(}->}[u]  & & Z_{\sigma\fr 1} \ar[ll]^{{\tt in}_{\hat{V}_\sigma'}\coupto Z_\sigma}_{\sim}  \ar@{^{(}->}[d] \\
& & \hat{V}_\sigma' & & Z \ar[ll]^{{\tt in}_{\hat{V}_\sigma'}}_{\sim}
}
\]

We also put $u'_{\sigma\fr 0}={\rm id}$, $Z_{\sigma\fr 0}=Z_\sigma\setminus\hat{V}'_\sigma$, and $\iota_{\sigma\fr 0}=\iota_\sigma\upto D_\sigma\fr 0$.
Note that $\iota_\sigma[D_{\sigma\fr 0}]=\iota[D_\sigma\setminus U_\sigma]=Z_\sigma\setminus V_\sigma'=Z_{\sigma\fr 0}$.
Hence, $\iota_{\sigma\fr 0}\colon D_{\sigma\fr 0}\simeq Z_{\sigma\fr 0}$ is a homeomorphism.

\medskip

{\bf Case 2.}
If $\sigma$ is labeled by $\ad$, then put $V'_{\sigma,n}=\iota_\sigma[D_\sigma\cap U_{\sigma,n}]$, 
and let $\hat{V}'_{\sigma,n}$ be an open set in $Z$ such that $V'_{\sigma,n}=\hat{V}'_{\sigma,n}\cap Z_\sigma$ for each $n\in\om$, as above.
Then put
\begin{align*}
Z_{\sigma\fr n}={\tt in}^{-1}_{\hat{V}'_{\sigma,n}}[Z_\sigma],
& &
u'_{\sigma\fr n}=\left({\tt in}_{\hat{V}'_{\sigma,n}}\coupto Z_\sigma\right)^{-1}\colon V'_{\sigma,n}\simeq Z_{\sigma\fr n}.
\end{align*}

As before $Z_{\sigma\fr n}$ is closed in $Z$.
Then define $\iota_{\sigma\fr n}$ as the restriction of $u'_{\sigma\fr n}\circ\iota_\sigma$ up to $U_{\sigma,n}$.
Since $D_{\sigma\fr n}=D_\sigma\cap U_{\sigma,n}$, the map $\iota_{\sigma\fr n}\colon D_{\sigma\fr n}\simeq Z_{\sigma\fr n}$ is a homeomorphism.

\medskip

{\bf Case 3.}
If $\sigma$ is labeled by $\phi_\alpha$, then put $\hat{V}'_\sigma=Z$, $Z_{\sigma\fr 0}=j_\alpha[Z_\sigma]$, $u'_{\sigma\fr 0}=j_\alpha$.
By the property of our specific Turing jump operator mentioned above, $Z_{\sigma\fr 0}$ is closed in $Z$.
Since $\iota_\sigma\colon D_\sigma\simeq Z_\sigma$ by the induction hypothesis, one can effectively find a homeomorphism $\iota^\star\colon j_\alpha[D_\sigma]\simeq j_\alpha[Z_\sigma]$ by the UOP property.
Then define $\iota_{\sigma\fr 0}=\iota_\sigma^\star$.
Diagrammatically,
\[
\xymatrix{
D_\sigma \ar[rr]^{\sim}_{\iota_\sigma} \ar[d]_{j_\alpha} & & Z_\sigma \ar[d]^{j_\alpha} \\
j_\alpha[D_\sigma] \ar[rr]^{\sim}_{\iota_\sigma^\star} & & j_\alpha[Z_\sigma]
}
\]

Finally, if $\sigma\in\syn_t$ is labeled by $\mul$ or $\phi_\alpha$, we define $U'_\sigma=\hat{V}'_\sigma$; and if $\sigma$ is labeled by $\ad$, we define $U'_{\sigma,0}=\hat{V}'_{\sigma,0}\cup(Z\setminus Z_\sigma)$ and $U'_{\sigma,n}=\hat{V}'_{\sigma,n}$ for each $n>0$.
Note that $U'_{\sigma,0}$ is open since $Z_\sigma$ is closed.
Let us consider the command $\bfU'=(U'_\sigma,u'_\sigma)_{\sigma\in\syn_t}$.

\begin{claim}
The command $\bfU'$ is strongly total.
\end{claim}

\begin{proof}
By the property of the domain assignment,
if $\sigma$ is labeled by $\ad$, then $D_\sigma$ is covered by $(U_{\sigma,n})_{n\in\om}$ since the command $\bfU$ is total.
Therefore,
\[Z_\sigma=\iota[D_\sigma]\subseteq\iota_\sigma\left[D_\sigma\cap\bigcup_{n\in\om}U_{\sigma,n}\right]=\bigcup_{n\in\om}\iota_\sigma[D_\sigma\cap U_{\sigma,n}]=\bigcup_{n\in\om}V'_{\sigma,n}\subseteq\bigcup_{n\in\om}U_{\sigma,n}'.\]

Moreover, we have $Z\setminus Z_\sigma\subseteq U_{\sigma,0}'$.
Hence, $(U'_{\sigma,n})_{n\in\om}$ is a cover of $Z$.
This means that $\bfU'$ is strongly total.
\end{proof}

As before, for a node $\sigma\in\syn_t$ of length $\ell$, we define a partial function $\val'_\sigma$ as follows:
\[\val'_\sigma=u'_\sigma\circ u'_{\sigma\upto(\ell-1)}\circ\dots\circ u'_{\sigma\upto 2}\circ u'_{\sigma\upto 1}.\]

If $\sigma$ is a true position for $x$ with respect to $\bfU'$, then $\val'_\sigma(x)\in Z_\sigma$.

\begin{claim}
The commands $\bfU$ and $\bfU'$ determine the same function, i.e., $\eval{\bfU}=\eval{\bfU'}$.
\end{claim}

\begin{proof}
We inductively show $\val'_\sigma(x)=\iota_\sigma\circ\val_\sigma(x)$ for any $x\in Z$.
If $\sigma=\ep$, the assertion is clear.
If $\sigma$ is labeled by $\ad$, then by the induction hypothesis, we have $\val'_{\sigma\fr n}(x)=u'_{\sigma\fr n}\circ\val'_\sigma(x)=u_{\sigma\fr n}'\circ\iota_\sigma\circ\val_\sigma(x)$.
By definition, we have $\iota_{\sigma\fr n}(z)=u'_{\sigma\fr n}\circ\iota_\sigma(z)$ for any $z\in D_\sigma\cap U_{\sigma,n}$.
Therefore, since $\val_{\sigma\fr n}(x)\in D_{\sigma\fr n}=D_\sigma\cap U_{\sigma,n}$, we have
\[\val'_{\sigma\fr n}(x)=u_{\sigma\fr n}'\circ\iota_\sigma\circ\val_\sigma(x)=\iota_{\sigma\fr n}\circ\val_{\sigma}(x)=\iota_{\sigma\fr n}\circ\val_{\sigma\fr n}(x).\]

The same argument applies when $\sigma$ is labeled by $\mul$.
If $\sigma$ is labeled by $\phi_\alpha$,
then by the induction hypothesis and the UOP property,
\[\val'_{\sigma\fr 0}=u'_{\sigma\fr 0}\circ\val'_{\sigma}=j_\alpha\circ\iota_\sigma\circ\val_\sigma=\iota_\sigma^\star\circ j_\alpha\circ\val_\sigma=\iota_{\sigma\fr 0}\circ\val_{\sigma\fr 0}.\]

By induction we next prove that a node $\sigma\in\syn_t$ is a true position for $x$ with respect to $\bfU$ if and only if $\sigma$ is a true position for $x$ with respect to $\bfU'$.
Assume that $\sigma$ is a true position for $x$ with respect to $\bfU$ if and only if $\sigma$ is a true position for $x$ with respect to $\bfU'$.
In this case, $\val_\sigma(x)$ is defined and contained in $D_\sigma$.
If $\sigma$ is labeled by $\ad$, then $\val_\sigma(x)\in D_\sigma\cap U_{\sigma,n}$ if and only if $\val'_\sigma(x)=\iota_\sigma\circ\val_\sigma(x)\in\iota_\sigma[D_\sigma\cap U_{\sigma,n}]=V'_{\sigma,n}$.
Since $U'_{\sigma,n}\cap Z_{\sigma}=V'_{\sigma,n}=Z_\sigma\cap U'_{\sigma,n}$.
Hence, $\sigma\fr n$ is a true position for $x$ with respect to $\bfU$ if and only if $\sigma\fr n$ is a true position for $x$ with respect to $\bfU'$.
The same argument applies when $\sigma$ is labeled by $\mul$ or $\phi_\alpha$.

In particular, $\sigma\in\syn_t$ is a true path for $x$ with respect to $\bfU$ if and only if $\sigma$ is a true path for $x$ with respect to $\bfU'$.
This means that $\eval{\bfU}(x)=q$ if and only if $\eval{\bfU'}(x)=q$.
\end{proof}

Note that the above construction is effective, that is, there exists a computable transformation of a given code of a total flowchart $\bfS$ on the $\lang(Q)$-term $t$ into a code of a strongly total command $\bfU'$ on the term $t$ such that $\eval{\bfS}=\eval{\bfU'}$.
This concludes the proof of Theorem \ref{prop:matryo-flow}.
\end{proof}

\section{Symbolic Wadge reducibility}

\subsection{Symbolic representation}

As an advantage of defining $\tpbf{\Sigma}_t$ by means of a flowchart,
in this section, we argue that one can export many results on the term classes $\tpbf{\Sigma}_t$ on zero-dimensional Polish spaces to arbitrary Polish spaces.
First, one of the basic ideas in modern computability theory is that various mathematical objects (such as real and complex numbers) can be represented by symbolic sequences.
A symbolic sequence is an element of $I^\om$, where $I$ is an alphabet, i.e., a set of symbols.
In this article, we assume that $I=\om$, and consider the set $Z=\om^\om$ of all symbolic sequences.

A {\em symbolic representation}, or simply a {\em representation}, of a set $X$ is a partial surjection $\delta_X\pcolon Z\to X$, where $Z$ is as above.
If $\delta_X(p)=x$, then $p$ is called a {\em $\delta_X$-name of $x$}\index{name} (or simply, a {\em name of $x$} if $\delta_X$ is clear from the context).
A pair of a set and its representation is called a {\em represented space}.
A representation $\delta\colon Z\to X$ is {\em admissible} if for any partial continuous function $f\pcolon Z\to X$ there exists a continuous function $g\pcolon Z\to Z$ such that $f=\delta\circ g$.
It is known that every second-countable $T_0$ space (indeed, every $T_0$ space having a countable cs-network) has an admissible representation; see Schr\"oder \cite{Sch02}.

\begin{example}
Let $\mathcal{X}=(X,d,\alpha)$ be a separable metric space, where $\{\alpha_i\}_{i\in\om}$ is a dense subset of $X$.
Then, a {\em Cauchy name}\index{Cauchy name} of a point $x\in X$ is a sequence $p\in\N^\N$ such that $d(x,\alpha_{p(k)})<2^{-k}$ for any $k\in\N$.
This notion induces a partial surjection $\delta:\subseteq\N^\N\to X$ defined by
\[\delta(p)=x\iff p\mbox{ is a Cauchy name of }x.\]
This surjection $\delta$ is called the {\em Cauchy representation of }\index{Cauchy representation}$X$ (induced from $(d,\alpha)$).
One can show that $\delta$ is an admissible representation of $\mathcal{X}$.
\end{example}

\begin{fact}[de Brecht {\cite[Theorem 49]{deB13}}]\label{fact:total-representation}
Every Polish space has a total admissible representation.
Indeed, every quasi-Polish space has a total admissible representation.
\end{fact}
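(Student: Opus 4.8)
The plan is to reduce everything to the ``largest'' quasi-Polish space $\mathbb{P}(\omega)$ (the powerset of $\omega$ with the Scott topology) and then to \emph{totalize} an admissible representation by post-composing with a retraction of $\Baire$ onto a closed subspace. First I would recall two ingredients from the representation theory of quasi-Polish spaces \cite{deB13}: on the one hand, every quasi-Polish space $X$ is homeomorphic to a $\bfPi^0_2$ subspace of $\mathbb{P}(\omega)$; on the other hand, $\mathbb{P}(\omega)$ itself carries the total admissible enumeration representation $\delta_{\mathbb{P}}\colon\Baire\to\mathbb{P}(\omega)$, $\delta_{\mathbb{P}}(p)=\{n:n+1\in{\rm ran}(p)\}$. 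So from now on I identify $X$ with a $\bfPi^0_2$ subset of $\mathbb{P}(\omega)$.

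Next I would record the (essentially trivial) fact that the restriction of an admissible representation to the preimage of a subspace is again admissible: if $\eta\pcolon\Baire\to Y$ is admissible and $X\subseteq Y$, then for any partial continuous $f\pcolon\Baire\to X$ the composite with the inclusion $X\hookrightarrow Y$ is a partial continuous map into $Y$, so admissibility of $\eta$ yields a continuous $g\pcolon\Baire\to\Baire$ with $\eta\circ g=f$, and necessarily ${\rm ran}(g\upto{\rm dom}(f))\subseteq\eta^{-1}[X]={\rm dom}(\eta\upto\eta^{-1}[X])$. Applying this to $\eta=\delta_{\mathbb{P}}$ gives an admissible representation $\delta_0\pcolon\Baire\to X$ with domain $D:=\delta_{\mathbb{P}}^{-1}[X]$. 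Since $\delta_{\mathbb{P}}$ is continuous and $X$ is $\bfPi^0_2$ in $\mathbb{P}(\omega)$, the set $D$ is $\bfPi^0_2$ in $\Baire$; and on the metrizable space $\Baire$ the de Brecht hierarchy agrees with the classical Borel hierarchy, so $D$ is a $G_\delta$ subset of $\Baire$, in particular a zero-dimensional Polish space.

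Now I would invoke the classical theorem that every zero-dimensional Polish space is homeomorphic to a closed subspace of $\Baire$, together with the standard fact that every closed $C\subseteq\Baire$ is a retract of $\Baire$ via the ``follow-the-branch-then-take-the-leftmost-extension'' retraction. Transporting this along the homeomorphism $D\cong C$ produces a total continuous surjection $\rho\colon\Baire\to D$ and a continuous section $\sigma\colon D\to\Baire$ with $\rho\circ\sigma={\rm id}_D$. Finally I set $\delta:=\delta_0\circ\rho\colon\Baire\to X$. As $\rho$ is total with image $D={\rm dom}(\delta_0)$, the map $\delta$ is total, and it is surjective since both $\rho$ and $\delta_0$ are. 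For admissibility, given a partial continuous $f\pcolon\Baire\to X$, admissibility of $\delta_0$ yields a continuous $h$ with $\delta_0\circ h=f$ and ${\rm ran}(h)\subseteq D$; then $g:=\sigma\circ h$ is continuous and $\delta\circ g=\delta_0\circ\rho\circ\sigma\circ h=\delta_0\circ h=f$. Hence $\delta$ is a total admissible representation of $X$, and the Polish case follows since every Polish space is quasi-Polish.

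The only real content sits in the first paragraph, namely de Brecht's embedding $X\cong(\bfPi^0_2$ subspace of $\mathbb{P}(\omega))$ and the admissibility of the enumeration representation, both of which I would simply cite; the rest is bookkeeping. The two places needing a little care are: checking that admissibility survives both the restriction to $\delta_{\mathbb{P}}^{-1}[X]$ and the post-composition with $\rho$ (the continuous section $\sigma$ is exactly the device that makes the latter work), and the coincidence of the quasi-Polish $\bfPi^0_2$ class with the classical $G_\delta$ class on $\Baire$, which is what licenses the ``zero-dimensional Polish $=$ closed subspace of $\Baire$'' step.
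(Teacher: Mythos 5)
Your argument is sound. Since the paper states this as a Fact and cites de Brecht's Theorem 49 without giving a proof, there is no in-paper argument to compare against; what you have written is essentially the standard route by which de Brecht's result is proved, so I will briefly confirm the steps and flag the one point that deserves a word of care.

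The chain you use is: (i) every quasi-Polish space is homeomorphic to a $\tpbf{\Pi}^0_2$ subspace of $\mathbb{P}(\omega)$ with the Scott topology; (ii) the enumeration representation of $\mathbb{P}(\omega)$ is total and admissible; (iii) restricting an admissible representation to the preimage of a subspace yields an admissible representation of that subspace; (iv) the preimage $D$ of $X$ is $\tpbf{\Pi}^0_2$ in $\Baire$, and since de Brecht's hierarchy coincides with the classical one on metrizable spaces, $D$ is $G_\delta$, hence zero-dimensional Polish; (v) every nonempty zero-dimensional Polish space is homeomorphic to a closed subspace of $\Baire$, and every nonempty closed $C\subseteq\Baire$ is a retract of $\Baire$; (vi) pushing the retraction through the homeomorphism $D\cong C$ and post-composing totalizes the representation while preserving admissibility, because the continuous section $\sigma$ gives $\delta\circ(\sigma\circ h)=\delta_0\circ\rho\circ\sigma\circ h=\delta_0\circ h=f$. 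Each of these is correct. Two remarks: first, it is worth noting explicitly that $D$ being merely $G_\delta$ is \emph{not} enough on its own to make $D$ a retract of $\Baire$ (the set $\Baire\setminus\{0^\om\}$ already fails), which is precisely why you must re-embed $D$ as a \emph{closed} subset $C$ via Alexandrov--Urysohn before invoking the retraction; you do say this, but it is the one place where a reader might shortcut and get a wrong argument. Second, the retraction onto a closed set requires $C\neq\emptyset$, i.e.\ $X\neq\emptyset$; the empty space has no total representation at all and is tacitly excluded, as is customary. With these understood, your proof is a clean and correct account of the fact, and it additionally makes transparent why the domain of an admissible representation of a quasi-Polish space can always be taken to be all of $\Baire$, which is exactly what the paper later uses.
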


For represented spaces $\mathcal{X}=(X,\delta_X)$ and $\mathcal{Y}=(Y,\delta_Y)$, we say that a function $f\colon\mathcal{X}\to\mathcal{Y}$ is {\em symbolically continuous} if there exists a continuous function $F\pcolon Z\to Z$ such that, given a $\delta_X$-name ${\tt x}$ of $x\in X$, $F({\tt x})$ returns a $\delta_Y$-name of $f(x)$.
In other words, the following diagram commutes:
\[
\xymatrix{
X \ar[rr]^f & & Y \\
Z \ar[u]^{\delta_X} \ar[rr]_{F} & & Z\ar[u]_{\delta_Y}
}
\]

Such a function $F$ is called a {\em realizer} of $f$.
As long as we are dealing with admissible representations of Polish spaces (more generally, second countable $T_0$-spaces), there is no need to distinguish between symbolic continuity and topological continuity at all.

\begin{fact}[Schr\"oder {\cite[Theorem 4]{Sch02}}]
If $\mathcal{X}$ and $\mathcal{Y}$ are admissible represented spaces.
Then, $f$ is sequentially continuous if and only if $f$ is symbolically continuous.
\end{fact}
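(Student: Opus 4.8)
The plan is to prove the two implications separately, using the universal property of admissible representations together with the fact that a partial function $\pcolon Z\to Y$ on a subspace of the metrizable space $Z=\Baire$ is continuous if and only if it is sequentially continuous. Recall also that admissible representations are in particular continuous.

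For the implication ``sequentially continuous $\Rightarrow$ symbolically continuous'', suppose $f\colon X\to Y$ is sequentially continuous, and consider the partial map $f\circ\delta_X\pcolon Z\to Y$ with domain ${\rm dom}(\delta_X)$. If $p_n\to p$ inside ${\rm dom}(\delta_X)$, then $\delta_X(p_n)\to\delta_X(p)$ by continuity of $\delta_X$, hence $f(\delta_X(p_n))\to f(\delta_X(p))$ by sequential continuity of $f$; since ${\rm dom}(\delta_X)$ is metrizable, this shows that $f\circ\delta_X$ is continuous, i.e.\ that it is a partial continuous function $\pcolon Z\to Y$. Applying admissibility of $\delta_Y$ to this function yields a continuous $F\pcolon Z\to Z$ with $f\circ\delta_X=\delta_Y\circ F$ on ${\rm dom}(\delta_X)$, which is precisely a realizer of $f$.

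For the converse, suppose $F\pcolon Z\to Z$ is a continuous realizer of $f$, so that $f\circ\delta_X=\delta_Y\circ F$ on ${\rm dom}(\delta_X)$, and let $x_n\to x$ in $X$; we must show $f(x_n)\to f(x)$. The idea is to lift the convergent sequence $(x_n)$ to a convergent sequence of $\delta_X$-names. To this end, use the convergent-sequence space $\N_\infty=\N\cup\{\infty\}$, whose nonempty open sets are the subsets of $\N$ together with the cofinite sets containing $\infty$, equipped with the explicit total admissible representation $\nu\colon Z\to\N_\infty$ defined by letting $\nu(p)$ be the least index of a nonzero entry of $p$, with $\nu(0^\om)=\infty$; one checks routinely that $\nu$ is continuous and admissible, and that the names $q_n:=0^n\fr\langle 1\rangle\fr 0^\om$ satisfy $\nu(q_n)=n$ and converge to $q_\infty:=0^\om$ with $\nu(q_\infty)=\infty$. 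The map $c\colon\N_\infty\to X$ sending $n\mapsto x_n$ and $\infty\mapsto x$ is continuous, so $c\circ\nu\colon Z\to X$ is total and continuous, and admissibility of $\delta_X$ provides a continuous $g\colon Z\to Z$ with $c\circ\nu=\delta_X\circ g$. Setting $p_n:=g(q_n)$ and $p:=g(q_\infty)$, continuity of $g$ gives $p_n\to p$, while $\delta_X(p_n)=c(n)=x_n$ and $\delta_X(p)=c(\infty)=x$. Finally, continuity of $F$ gives $F(p_n)\to F(p)$, and continuity of $\delta_Y$ then gives $f(x_n)=\delta_Y(F(p_n))\to\delta_Y(F(p))=f(x)$, as required.

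I expect the main obstacle to be the converse implication, and within it the step of lifting a convergent sequence in $X$ to a convergent sequence of $\delta_X$-names: this is where admissibility of $\delta_X$ is genuinely used, and it relies on combining the universal property of admissibility with a concrete admissible representation of $\N_\infty$ that carries an explicit convergent sequence of names (the bare factorization property produces a factoring map $g$ but is inert about convergence until fed such a sequence). The first implication, by contrast, is essentially formal once the coincidence of continuity and sequential continuity on the metrizable space $Z$ is invoked.
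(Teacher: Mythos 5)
Your proof is correct; note that the paper states this result as a quoted Fact (Schr\"oder's theorem) without giving a proof, so there is nothing internal to compare against, but your argument is the standard one: metrizability of $\mathrm{dom}(\delta_X)$ plus the universal property of $\delta_Y$ for one direction, and lifting a convergent sequence via an admissible representation of $\N_\infty$ plus the universal property of $\delta_X$ for the other. The only cosmetic remark is that you assert admissibility of $\nu$ but never use it --- continuity of $c\circ\nu$ together with admissibility of $\delta_X$ is all the argument needs --- and that continuity of $\delta_X$ and $\delta_Y$, which you correctly invoke, is part of the standard definition of admissibility even though the paper's abbreviated definition records only the factorization property.
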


Let $\mathcal{X}=(X,\delta_X)$ be a represented space, and $\Gamma$ be a pointclass.
We say that $A\subseteq X$ is {\em symbolically $\Gamma$} if the set $\delta_X^{-1}[A]$ of all names of elements in $A$ is $\Gamma$ in ${\rm dom}(\delta)$.
As above, the symbolic Borel hierarchy and the topological Borel hierarchy coincide, and moreover, the symbolic difference hierarchy and the topological difference hierarchy coincide.
More precisely, let $D_\beta(\tpbf{\Sigma}^0_\alpha)$ be the $\beta$-th level of the difference hierarchy starting from $\tpbf{\Sigma}^0_\alpha$ sets, and then we have the following:

\begin{fact}[de Brecht {\cite[Theorem 68]{deB13}}]\label{fact:deBrecht-symbolic-complexity}
Let $\mathcal{X}$ be an admissibly represented second-countable $T_0$-space.
Then, a set $A\subseteq\mathcal{X}$ is $D_\beta(\tpbf{\Sigma}^0_\alpha)$ if and only if $A$ is symbolically $D_\beta(\tpbf{\Sigma}^0_\alpha)$.
\end{fact}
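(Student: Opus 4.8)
The plan is to treat the two implications separately: the left-to-right direction is a soft consequence of continuity, while the converse is where admissibility must enter, and there I would first settle the Borel case (symbolically $\tpbf{\Sigma}^0_\alpha$ implies $\tpbf{\Sigma}^0_\alpha$) and then bootstrap it to the whole difference hierarchy.

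For ``$A\in D_\beta(\tpbf{\Sigma}^0_\alpha)(\mathcal X)$ implies $A$ symbolically $D_\beta(\tpbf{\Sigma}^0_\alpha)$'' I would use only that the representation $\delta_{\mathcal X}\pcolon Z\to\mathcal X$ is continuous. Then $B\mapsto\delta_{\mathcal X}^{-1}[B]$ commutes with complements and with countable unions and intersections, hence carries $\tpbf{\Sigma}^0_\xi(\mathcal X)$ into $\tpbf{\Sigma}^0_\xi$ relative to $\mathrm{dom}(\delta_{\mathcal X})$ for every $\xi<\om_1$; since $D_\beta(\tpbf{\Sigma}^0_\alpha)$ arises from an increasing $\beta$-sequence of $\tpbf{\Sigma}^0_\alpha$ sets by a single Boolean operation that preimages respect, $\delta_{\mathcal X}^{-1}[A]$ is $D_\beta(\tpbf{\Sigma}^0_\alpha)$ relative to $\mathrm{dom}(\delta_{\mathcal X})$.

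The converse is the substantive half. I would first prove by transfinite induction on $\alpha$ that every symbolically $\tpbf{\Sigma}^0_\alpha$ subset of $\mathcal X$ is $\tpbf{\Sigma}^0_\alpha$ --- equivalently, that whenever $C\subseteq\mathrm{dom}(\delta_{\mathcal X})$ is $\tpbf{\Sigma}^0_\alpha$ and $\delta_{\mathcal X}$-saturated (i.e.\ $C=\delta_{\mathcal X}^{-1}[\delta_{\mathcal X}[C]]$), its image $\delta_{\mathcal X}[C]$ is $\tpbf{\Sigma}^0_\alpha$ in $\mathcal X$. The case $\alpha=1$ is exactly the statement that an admissible representation is a sequential quotient map: if $\delta_{\mathcal X}^{-1}[A]$ is open but $A$ is not, then, $\mathcal X$ being sequential, one picks $x\in A$ and a sequence $x_n\to x$ with $x_n\notin A$, realizes this convergent sequence by a total continuous map $Z\to\mathcal X$, lifts it through $\delta_{\mathcal X}$ by admissibility in the form of \cite[Theorem 4]{Sch02}, and obtains names $p_n\to p$ with $\delta_{\mathcal X}(p_n)=x_n\notin A$ and $\delta_{\mathcal X}(p)=x\in A$, contradicting openness of $\delta_{\mathcal X}^{-1}[A]$. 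For the inductive step the difficulty is that a saturated $\tpbf{\Sigma}^0_\alpha$ set decomposes into $\tpbf{\Pi}^0_{\alpha_n}$ sets with $\alpha_n<\alpha$ whose individual pieces are not saturated, so the hypothesis does not apply directly. Here I would exploit the structure available for second-countable $T_0$ spaces: $\mathcal X$ embeds topologically into the Scott domain $\mathbb{P}(\om)$ (for Polish or quasi-Polish $\mathcal X$ one may instead use the total admissible representations of Fact \ref{fact:total-representation}), the given admissible representation is equivalent to the restriction of the standard enumeration representation of $\mathbb{P}(\om)$, and for the latter the $\alpha$-th Borel level is precisely the family of positively $\tpbf{\Sigma}^0_\alpha$-definable sets --- a family visibly closed under saturation; transporting this back along the continuous reductions witnessing equivalence of representations forces $\delta_{\mathcal X}[C]\in\tpbf{\Sigma}^0_\alpha(\mathcal X)$.

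Finally I would lift the Borel case to the difference hierarchy. For a symbolically $D_\beta(\tpbf{\Sigma}^0_\alpha)$ set $A$, write $\delta_{\mathcal X}^{-1}[A]$ in its canonical Hausdorff form as the $D_\beta$-difference of an increasing $\beta$-sequence $(C_\eta)_{\eta<\beta}$ of $\tpbf{\Sigma}^0_\alpha$ sets, where each $C_\eta$ is produced from $\delta_{\mathcal X}^{-1}[A]$ by a prescription invariant under $\delta_{\mathcal X}$-saturation, so that every $C_\eta$ is itself saturated. By the Borel case $C_\eta=\delta_{\mathcal X}^{-1}[B_\eta]$ for some $B_\eta\in\tpbf{\Sigma}^0_\alpha(\mathcal X)$; the $B_\eta$ increase since the $C_\eta$ do and $\delta_{\mathcal X}$ is onto; and $A=D_\beta\big((B_\eta)_{\eta<\beta}\big)$, so $A\in D_\beta(\tpbf{\Sigma}^0_\alpha)(\mathcal X)$. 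The main obstacle is the inductive step of the Borel case, i.e.\ the failure of saturation to be inherited by Borel decompositions; the embedding into $\mathbb{P}(\om)$ together with the positive-definability description of its Borel levels is the decisive tool there, after which the base case and the passage to the difference hierarchy are routine manipulation of preimages and Boolean operations.
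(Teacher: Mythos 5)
Your easy direction is fine, and so is the base case $\alpha=1$ of the converse: the sequence-lifting argument via admissibility (second-countable implies sequential) correctly shows that a saturated open set of names has open image. Note, though, that the paper does not prove this Fact at all---it is quoted from de Brecht---so the comparison below is with the standard argument, which the paper does package elsewhere as Fact \ref{dBCH} and the Vaught-transform technique of Proposition \ref{thm:vaught-transform}.

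There are two genuine gaps in your converse direction. First, the inductive step of the Borel case is not an argument: the claim that for the enumeration representation of $\mathbb{P}(\om)$ the $\alpha$-th Borel level is ``precisely the positively definable sets, visibly closed under saturation'' is exactly the statement to be proved, specialized to $\mathbb{P}(\om)$; nothing in the proposal establishes it, and it is not a syntactic triviality. What actually makes it work is a Baire-category argument on fibers: one arranges that $\delta$ is an open surjection with Polish fibers and uses the Vaught transform $\delta^\ast[C]=\{x:\|x\|\cap C\mbox{ is nonmeager in }\|x\|\}$, which sends $\tpbf{\Sigma}^0_\alpha$ to $\tpbf{\Sigma}^0_\alpha$ (Fact \ref{dBCH}, i.e.\ Saint-Raymond's lemma) and satisfies $\delta^\ast[\delta^{-1}[B]]=B$. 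Second, your reduction of the difference hierarchy to the Borel case fails: the canonical Hausdorff--Kuratowski witnesses $C_\eta$ are built from transfinite derivatives involving closures taken in the name space $\mathrm{dom}(\delta_{\mathcal X})\subseteq\Baire$, and the closure of a saturated set is in general not saturated (it is contained in, but typically strictly smaller than, $\delta_{\mathcal X}^{-1}[\overline{\delta_{\mathcal X}[C]}]$), so there is no ``prescription invariant under saturation''; the existence of saturated $\tpbf{\Sigma}^0_\alpha$ witnesses is essentially equivalent to the conclusion and cannot be assumed. The Vaught transform sidesteps this: take arbitrary, non-saturated witnesses $C_\eta$, set $B_\eta=\delta^\ast[C_\eta]\in\tpbf{\Sigma}^0_\alpha(\mathcal X)$, and verify $A=D_\beta\big((B_\eta)_{\eta<\beta}\big)$ by the Baire category theorem on each Polish fiber $\|x\|$: the least $\eta$ with $\|x\|\cap C_\eta$ nonmeager is attained exactly (all earlier levels form a countable union of meager sets), and any name realizing that least level decides membership of $x$ in $A$ with the correct parity.
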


As an important observation, these results show, in particular, that the notion of symbolic complexity does not depend on the choice of admissible representation.
Callard-Hoyrup \cite[Theorem 4.1]{CaHo20} has shown the effective version of Fact \ref{fact:deBrecht-symbolic-complexity}.

We are interested in whether these results hold for the term classes $\tpbf{\Sigma}_t$.
Let $(X,\delta_X)$ be an admissibly represented space.
The following result is due to Selivanov \cite{Sel19b}.

\begin{theorem}[Selivanov {\cite[Theorem 4.6]{Sel19b}}]\label{thm:Vaught-transform-main}
Let $\delta$ be an open admissible representation of a second-countable $T_0$ space $X$ with the domain $\|X\|\subseteq\om^\om$, and let $t$ be a normal $\lang(Q)$-term.
For a function $f\colon X\to Q$, $f$ is a $\tpbf{\Sigma}_t(X)$-function if and only if $f\circ\delta\pcolon Z\to Q$ is a $\tpbf{\Sigma}_t(\|X\|)$-function.
\end{theorem}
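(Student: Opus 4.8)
The plan is to argue both implications directly in terms of the flowchart definition of $\tpbf{\Sigma}_t$, passing between flowcharts over $X$ and flowcharts over $\|X\|$. The implication $f\in\tpbf{\Sigma}_t(X)\Rightarrow f\circ\delta\in\tpbf{\Sigma}_t(\|X\|)$ is the easy half: given a deterministic total flowchart $\bfS=(S_\sigma)_{\sigma\in\syn_t}$ on $t$ over $X$ with $\eval{\bfS}=f$, put $S'_\sigma=\delta^{-1}[S_\sigma]$. Since $\delta$ is continuous, $S'_\sigma$ is $\tpbf{\Sigma}^0_{\rank(\sigma)}$ in $\|X\|$, so $\bfS'=(S'_\sigma)_{\sigma\in\syn_t}$ is a flowchart on $t$ over $\|X\|$. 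A straightforward induction along $\syn_t$ shows that a node $\sigma$ is a true position for $p\in\|X\|$ with respect to $\bfS'$ if and only if it is a true position for $\delta(p)$ with respect to $\bfS$ (at a $\mul$- or $\ad$-node this is the equivalence $p\in\delta^{-1}[S_\sigma]\iff\delta(p)\in S_\sigma$, and the other cases are immediate). Hence $\bfS'$ is total and deterministic and $\eval{\bfS'}(p)=\eval{\bfS}(\delta(p))=f(\delta(p))$. This half uses only continuity of $\delta$, not openness or admissibility.

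For the converse, fix a deterministic total flowchart $\mathbf{T}=(T_\sigma)_{\sigma\in\syn_t}$ on $t$ over $\|X\|$ with $\eval{\mathbf{T}}=f\circ\delta$; by Lemma \ref{lem:monotone} (this is where normality of $t$ is used) we may assume $\mathbf{T}$ is monotone, so its domain assignment $(D_\sigma)_{\sigma\in\syn_t}$ satisfies $D_{\sigma\fr 1}=T_\sigma$ at a $\mul$-node and $D_{\sigma\fr n}=T_{\sigma,n}$ at a $\ad$-node, the attached sets being $\tpbf{\Sigma}^0_{\rank(\sigma)}$ in $\|X\|$. We push $\mathbf{T}$ down to $X$ by the Vaught transform along $\delta$: for $A\subseteq\|X\|$ write $A^{\ast}=\{x\in X:A\cap\delta^{-1}(x)\text{ is non-meager in the fiber }\delta^{-1}(x)\}$, and set $S_\sigma=T_\sigma^{\ast}$ at $\mul$-nodes and $S_{\sigma,n}=T_{\sigma,n}^{\ast}$ at $\ad$-nodes. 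Two facts about $(\cdot)^{\ast}$ drive the argument: (i) $A^{\ast}$ is $\tpbf{\Sigma}^0_\xi$ in $X$ whenever $A$ is $\tpbf{\Sigma}^0_\xi$ in $\|X\|$ — the base case is where openness of $\delta$ enters, since for open $A$ one has $A^{\ast}=\delta[A]$, which is open, while the inductive step uses the Baire category theorem on the fibers; and (ii) $(\cdot)^{\ast}$ is monotone and, fiberwise, commutes with countable unions. By (i), $\bfS=(S_\sigma)_{\sigma\in\syn_t}$ is a flowchart on $t$ over $X$.

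It remains to verify that $\bfS$ is total and deterministic with $\eval{\bfS}=f$. Let $(D'_\sigma)$ be the domain assignment of $\bfS$; using monotonicity of $\mathbf{T}$ and of $(\cdot)^{\ast}$, an induction along $\syn_t$ gives $D'_\sigma\subseteq D_\sigma^{\ast}$ for every $\sigma$, the only non-routine case being the $0$-branch of a $\mul$-node, which uses $D_\sigma^{\ast}\setminus D_{\sigma\fr 1}^{\ast}\subseteq(D_\sigma\setminus D_{\sigma\fr 1})^{\ast}$. Now if a leaf $\rho$ is a true path for $x$ with respect to $\bfS$, then $x\in D'_\rho\subseteq D_\rho^{\ast}$, so there is a name $p$ of $x$ with $p\in D_\rho$, i.e.\ $\rho$ is a true path for $p$ with respect to $\mathbf{T}$; hence the label of $\rho$ equals $\eval{\mathbf{T}}(p)=f(\delta(p))=f(x)$, which yields both determinism of $\bfS$ and $\eval{\bfS}(x)=f(x)$. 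For totality, if a $\ad$-node $\sigma$ is a true position for $x$ with respect to $\bfS$, then $x\in D'_\sigma\subseteq D_\sigma^{\ast}$, so $D_\sigma\cap\delta^{-1}(x)$ is non-meager; since $\mathbf{T}$ is total and monotone, $(D_{\sigma\fr n})_n$ covers $D_\sigma$, so by (ii) we get $x\in D_{\sigma\fr n}^{\ast}=S_{\sigma,n}$ for some $n$, whence $\sigma\fr n$ is again a true position for $x$. As $\syn_t$ is well-founded, $x$ admits a true path, so $\bfS$ is total.

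The main obstacle is ingredient (i): that the Vaught transform along an open map preserves each Borel rank $\tpbf{\Sigma}^0_\xi$, finite and transfinite alike — the genuinely technical lemma here, whose proof goes through the localized form of the Vaught transform relative to a countable basis, together with careful Baire-category bookkeeping. A subsidiary point is ensuring the fibers $\delta^{-1}(x)$ are Baire spaces so that ``non-meager'' behaves as expected; this is unproblematic once $\|X\|$ is taken to be Polish (e.g.\ via Fact \ref{fact:total-representation} for quasi-Polish $X$), its fibers then being closed subsets of $\om^\om$. Optionally, Proposition \ref{prop:zero-dim-partition} can be invoked at the outset to make $\mathbf{T}$ reduced, which streamlines some of the bookkeeping but is not essential. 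Once (i) and this framework are in place, the rest is a routine induction along the syntax tree.
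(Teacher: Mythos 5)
Your proposal is correct and follows essentially the same route as the paper: the forward direction by pulling the flowchart back along $\delta$ (closure under continuous substitution), and the converse by applying the $\delta$-Vaught transform to a monotone flowchart, with the inclusion $D'_\sigma\subseteq D_\sigma^{\ast}$ via $\delta^\ast[A]\setminus\delta^\ast[B]\subseteq\delta^\ast[A\setminus B]$ and fiberwise Baire category for totality — exactly Proposition \ref{thm:vaught-transform} and Claim \ref{claim:vaught-transform}. Your ingredient (i) is precisely Fact \ref{dBCH}, which the paper also treats as a black box (citing Saint-Raymond, de Brecht, and Callard--Hoyrup) rather than proving.
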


Let us say that a function $f\colon X\to Q$ is {\em symbolically $\tpbf{\Sigma}_t$} if $f\circ\delta_X\colon Z\to Q$ is $\tpbf{\Sigma}_t$, where $\delta_X$ is an admissible representation of $X$.
Then, Theorem \ref{thm:Vaught-transform-main} can be rephrased as follows:
\[\mbox{$f$ is $\tpbf{\Sigma}_t$}\iff\mbox{$f$ is symbolically $\tpbf{\Sigma}_t$}.\]

Hence, the symbolic complexity (with respect to flowcharts) of a Borel function is always the same as its topological complexity.
Moreover, this equivalence holds effectively.
However, our definition of $\tpbf{\Sigma}_t$ is slightly different from the definition adopted in Selivanov \cite{Sel19b}, and we would also like to add one more remark about the conclusion that follows from this theorem (see Section \ref{sec:symbolic-Wadge}).
Therefore, for the sake of completeness, in Section \ref{sec:Vaught-transform}, we will give a complete proof of Theorem \ref{thm:Vaught-transform-main}, which also shows that a flowchart is a useful notion for making the proof crystal clear.
This is also useful for clarifying that the proof of our main theorem in Section \ref{sec:main} has an analogous structure to that of Selivanov's Theorem \ref{thm:Vaught-transform-main}, namely the transformation of the sets assigned to a flowchart.

\subsection{Symbolic Wadge reducibility}\label{sec:symbolic-Wadge}
One may apply this result to the Wadge theory on admissibly represented spaces introduced by Pequignot \cite{Peq15}.
In this section, we show that, even in higher-dimensional Polish spaces, our definition of $\tpbf{\Sigma}_t$ by means of a flowchart can be thought of as a Wadge class.

In contrast to the case of zero-dimensional spaces, it is known that the Wadge degrees in higher-dimensional spaces behaves badly \cite{ITS16,Schl17}.
For this reason, it is difficult to say that the Wadge degrees in higher-dimensional spaces is a useful measure of topological complexity.
To overcome this difficulty, Pequignot \cite{Peq15} introduced a modified version of Wadge reducibility.
Let $(X,\delta_X)$ and $(Y,\delta_Y)$ be admissibly represented spaces.

\begin{definition}[Pequignot \cite{Peq15}]\label{def:Pequignot-1}
We say that $A\subseteq X$ is {\em symbolic Wadge reducible} to $B\subseteq Y$ (written $A/X\sqleq_W B/Y$) if there exists a continuous function $\theta\pcolon\om^\om\to\om^\om$ such that for any $\delta_X$-name $p$,
\[\mbox{$p$ is a $\delta_X$-name of an element of $A$}\iff \mbox{$\theta(p)$ is a $\delta_Y$-name of an element of $B$}.\]

In other words, $A/X\sqleq_WB/Y$ states that $\delta_X^{-1}[A]$ is Wadge reducible to $\delta_Y^{-1}[B]$, but it is sufficient if such a reduction is defined only on the domain of $\delta_X$.
\end{definition}
This notion has also been studied in Camerlo \cite{Cam19}.
In general, we consider the following notion:

\begin{definition}\label{def:Pequignot-2}
Let $Q$ be a quasi-ordered set.
We say that $f\colon X\to Q$ is {\em symbolic Wadge reducible} to $g\colon Y\to Q$ (written $f\sqleq_Wg$) if there exists a continuous function $\theta\pcolon\om^\om\to\om^\om$ such that, whenever $p$ is a $\delta_X$-name of an element $x\in X$, $\theta(p)$ is a $\delta_Y$-name of an element $y\in Y$, and 
\[f(x)\leq_Q g(y).\]

In other words, $f\sqleq_Wg$ states that $f\circ\delta_X$ is Wadge reducible to $g\circ\delta_Y$.
\end{definition}

If $Q=\{0,1\}$ is equipped with the discrete order, then Definitions \ref{def:Pequignot-1} and \ref{def:Pequignot-2} for $Q$ coincide.
Diagrammatically, the definition of $f\sqleq_Wg$ may be described as follows:
\[
\xymatrix{
\Baire \ar[rr]^{\delta_X} \ar[d]_\theta & & X \ar[rr]^{f} & & Q \ar[d]^{\leq_Q} \\
\Baire \ar[rr]_{\delta_Y} & & Y \ar[rr]_{g} & & Q
}
\]

It is easy to see that the definition of $\sqleq_W$ is independent of the choice of the admissible representations.
Hence, if a given space $Z$ is quasi-Polish, as in de Brecht \cite[Theorem 68]{deB13}, one can always assume that a representation of $Z$ is an open function and has Polish fibers.

Recall from Facts \ref{fact:KM-main} and \ref{fact:KM-main2}, the Wadge classes of Borel functions $\om^\om\to Q$ coincide with the classes $\tpbf{\Sigma}^{W}_t(\Baire)$ for $\mathcal{L}(Q)$-terms $t$.
Recall also that $\treeleq$ is the nested homomorphic quasi-order on the normal well-formed $\lang(Q)$-terms introduced by Kihara-Montalb\'an \cite{KM19}.
By Theorems \ref{prop:matryo-flow} and \ref{thm:Vaught-transform-main}, we obtain the following:

\begin{cor}\label{cor:symbolic-Wadge}
Let $X$ and $Y$ be Polish spaces, $Q$ be a better-quasi-ordered set, and $t$ be a well-formed $\lang(Q)$-term.
Then, for any function $f\colon X\to Q$, if $g\colon Y\to Q$ is $\tpbf{\Sigma}_t(Y)$,
\[f\sqleq_W g\iff f\in\tpbf{\Sigma}_s(X)\mbox{ for some $s\treeleq t$}.\]
\end{cor}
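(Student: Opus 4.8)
The plan is to reduce Corollary~\ref{cor:symbolic-Wadge} to the known zero-dimensional case (Facts~\ref{fact:KM-main} and~\ref{fact:KM-main2}) by pulling everything back along admissible representations, using Selivanov's Theorem~\ref{thm:Vaught-transform-main} to translate between $\tpbf{\Sigma}_t(X)$ and $\tpbf{\Sigma}_t(\|X\|)$. First I would fix total admissible representations $\delta_X\colon\Baire\to X$ and $\delta_Y\colon\Baire\to Y$, which exist by Fact~\ref{fact:total-representation}; by the remark following Definition~\ref{def:Pequignot-2}, the relation $\sqleq_W$ does not depend on this choice, so for $Y$ I may further assume (via de Brecht~\cite{deB13}) that $\delta_Y$ is open with Polish fibers. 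Set $\tilde f=f\circ\delta_X\colon\Baire\to Q$ and $\tilde g=g\circ\delta_Y\colon\Baire\to Q$. Unwinding Definition~\ref{def:Pequignot-2}, $f\sqleq_W g$ is literally the statement that $\tilde f\leq_W\tilde g$ as $Q$-valued functions on $\Baire$ (a continuous $\theta$ on $\Baire$ witnessing the former is exactly a Wadge reduction of the latter, and conversely).

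Next I would handle $\tilde g$. Since $g\in\tpbf{\Sigma}_t(Y)$ and $\delta_Y$ is an open admissible representation of the second-countable $T_0$ space $Y$ with domain $\|Y\|=\Baire$ (totality), Theorem~\ref{thm:Vaught-transform-main} gives that $\tilde g=g\circ\delta_Y\in\tpbf{\Sigma}_t(\Baire)$. (Here I use that $t$ is well-formed, hence, by the equational-theory remark after the normality definition, may be taken normal, so Theorem~\ref{thm:Vaught-transform-main} applies; one should note that replacing $t$ by a normal form equal to it preserves both $\tpbf{\Sigma}_t$ and the relevant initial segment under $\treeleq$.) Now $\tilde g$ is a $\tpbf{\Sigma}_t^W(\Baire)$-function by the equivalence of definitions established via Theorem~\ref{prop:matryo-flow}, i.e.\ $\tpbf{\Sigma}_t(\Baire)=\tpbf{\Sigma}_t^W(\Baire)$. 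Thus Fact~\ref{fact:KM-main2} applies to $\tilde g$ on $\Baire$: for any $h\colon\Baire\to Q$,
\[
h\leq_W\tilde g\iff h\in\tpbf{\Sigma}_s^W(\Baire)\text{ for some }s\treeleq t.
\]

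Applying this with $h=\tilde f$, we get $f\sqleq_W g\iff\tilde f\leq_W\tilde g\iff\tilde f\in\tpbf{\Sigma}_s^W(\Baire)$ for some $s\treeleq t$. To finish, I would run Theorem~\ref{thm:Vaught-transform-main} in the other direction: $\tilde f\in\tpbf{\Sigma}_s^W(\Baire)=\tpbf{\Sigma}_s(\Baire)$ means $f\circ\delta_X\in\tpbf{\Sigma}_s(\|X\|)$ with $\|X\|=\Baire$, which by Theorem~\ref{thm:Vaught-transform-main} (applied to the term $s$, again passed to normal form) is equivalent to $f\in\tpbf{\Sigma}_s(X)$. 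Chaining the equivalences yields exactly the claimed biconditional.

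\textbf{Main obstacle.} The routine part is the diagram-chasing with $\delta_X,\delta_Y$; the delicate point is bookkeeping about \emph{normality and well-formedness of the terms}. Theorem~\ref{thm:Vaught-transform-main} is stated for normal terms, and Fact~\ref{fact:KM-main2} for well-formed terms, while $\treeleq$ is the order on normal well-formed terms; so I must check that (i) replacing $t$ and $s$ by equal normal forms changes neither $\tpbf{\Sigma}_{(-)}$ nor the truth of $s\treeleq t$, and (ii) when Fact~\ref{fact:KM-main2} produces some $s$, that $s$ is (or may be taken) well-formed so that $s\in\tpbf{\Sigma}_s(X)$ makes sense in the stated form. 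A secondary subtlety is confirming that "$\sqleq_W$ is independent of the admissible representation" is genuinely available for $Q$-valued functions and not merely for sets; this follows because a change of admissible representation is realized by a continuous map on $\Baire$ in both directions (admissibility), which composes harmlessly with a Wadge reduction, but it is worth spelling out since $Q$ is only quasi-ordered.
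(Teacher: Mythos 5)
Your proposal is correct and follows essentially the same route as the paper's proof: fix total admissible representations, observe that $f\sqleq_W g$ unwinds to $f\circ\delta_X\leq_W g\circ\delta_Y$, pass through Theorem~\ref{thm:Vaught-transform-main} and Theorem~\ref{prop:matryo-flow} to apply Fact~\ref{fact:KM-main2} on $\Baire$, and then run the same translations in reverse for $f$. The normality issue you flag as the "main obstacle" is handled in the paper by the same one-line reduction you propose (every well-formed term is $\treeleq$-equivalent to a normal well-formed one, by \cite{KM19}).
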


\begin{proof}
Every well-formed $\lang(Q)$-term $t$ is equivalent to a normal well-formed $\lang(Q)$-term with respect to $\treeleq$, see \cite{KM19}; hence, one can assume that $t$ is normal.
Let $\delta_X$ and $\delta_Y$ be total admissible representations of $X$ and $Y$, respectively.
By definition, $f\sqleq_{W}g$ if and only if $f\circ\delta_X\leq_Wg\circ\delta_Y$.
By Selivanov's Theorem \ref{thm:Vaught-transform-main}, $g\in\tpbf{\Sigma}_t(Y)$ if and only if $g\circ\delta\in\tpbf{\Sigma}_t(\Baire)$.
By Theorem \ref{prop:matryo-flow}, the latter condition is equivalent to $g\circ\delta\in\tpbf{\Sigma}_t^W(\Baire)$.
Hence, by Kihara-Montalb\'an's Theorem (Fact \ref{fact:KM-main2}),
\[f\circ\delta_X\leq_W g\circ\delta_Y\iff f\circ\delta_X\in\tpbf{\Sigma}^W_s(\Baire)\mbox{ for some $s\treeleq t$}.\]

Again by Theorem \ref{prop:matryo-flow}, $f\circ\delta_X\in\tpbf{\Sigma}^W_s(\Baire)$ if and only if $f\circ\delta_X\in\tpbf{\Sigma}_s(\Baire)$.
Hence, by Selivanov's Theorem \ref{thm:Vaught-transform-main}, the latter condition is equivalent to $f\in\tpbf{\Sigma}_s(X)$.
This concludes the proof.
\end{proof}

Let us describe a conclusion derived from Corollary \ref{cor:symbolic-Wadge}.
First, without knowing Corollary \ref{cor:symbolic-Wadge}, it is easy to see that the subsets of Polish spaces is semi-well-ordered under symbolic Wadge reducibility; see also Pequignot \cite{Peq15}.
Therefore, one can assign an ordinal rank to each subset of a Polish space $X$.
However, it is not immediately obvious what a pointclass in $X$ corresponding to a given ordinal rank $\alpha$ is.
Now, Corollary \ref{cor:symbolic-Wadge} makes this clear:
The pointclass in $X$ corresponding to the ordinal rank $\alpha$ is exactly what we expect it to be.

\subsection{Vaught transform of a flowchart}\label{sec:Vaught-transform}

Let $\delta$ be an admissible representation for a second-countable $T_0$-space $X$.
Then, for $x\in X$ we use $\|x\|$ to denote the set $\delta^{-1}\{x\}$ of all $\delta$-names of $x$.
As in de Brecht \cite[Theorem 68]{deB13}, one can assume that $\delta$ is an open function and has Polish fibers.
For a set $A\subseteq\om^\om$, define the {\em $\delta$-Vaught transform} $\delta^\ast[A]$ of $A$ (cf.~Saint-Raymond \cite[Lemma 17]{SR07}, de Brecht \cite[Theorem 68]{deB13} and Callard-Hoyrup \cite[Theorem 4.1]{CaHo20}) as follows:
\[\delta^\ast[A]=\{x\in X:\|x\|\cap A\mbox{ is not meager in }\|x\|\}.\]

\begin{fact}[Hyperarithmetical complexity of the forcing relation; cf.~\cite{SR07,deB13,CaHo20}]\label{dBCH}
Let $X$ be a second countable $T_0$-space with an admissible representation $\delta$.
Then, for any set $A\subseteq X$,
\[A\mbox{ is $\tpbf{\Sigma}^0_\alpha$}\implies \delta^\ast[A] \mbox{ is $\tpbf{\Sigma}^0_\alpha$}.\]

Indeed, given a $\tpbf{\Sigma}^0_\alpha$-code of $A$, one can effectively find a $\tpbf{\Sigma}^0_\alpha$-code of $\delta^\ast[A]$.
\end{fact}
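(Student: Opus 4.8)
Since $\delta^\ast[A]$ is defined only for subsets $A$ of the name space $\om^\om=Z$, I read the statement as being about such $A$. The plan is to prove, by transfinite induction on $\alpha\geq 1$ and simultaneously for every representation that is an open map with Polish fibres, the following two-sided strengthening: if $A\subseteq\om^\om$ is $\tpbf{\Sigma}^0_\alpha$ then $\delta^\ast[A]$ is $\tpbf{\Sigma}^0_\alpha$, and if $A$ is $\tpbf{\Pi}^0_\alpha$ then the dual transform $\delta^\triangle[A]:=\{x\in X:\|x\|\setminus A\text{ is meager in }\|x\|\}$ is $\tpbf{\Pi}^0_\alpha$; and all the codes are obtained effectively. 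The two halves are interchangeable through $\delta^\ast[A]=X\setminus\delta^\triangle[\om^\om\setminus A]$, so in the induction one feeds the $\delta^\triangle$-version at levels below $\alpha$ into the $\delta^\ast$-version at level $\alpha$. Two properties of $\delta$, available by the reduction recalled just before the statement (de Brecht \cite[Theorem 68]{deB13}), are used throughout: $\delta$ is open, so $\delta[U]$ is open, uniformly in a code of $U$, for every open $U$; and every fibre $\|x\|$ is Polish, so the Baire category theorem holds inside each fibre, and, since every Borel set has the Baire property in $\|x\|$, a Borel set $\|x\|\cap A$ is non-meager in $\|x\|$ precisely when it is comeager in $\|x\|\cap[s]$ for some $s$ with $\|x\|\cap[s]\neq\emptyset$.

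The base case $\alpha=1$ is immediate: if $U$ is open then $\|x\|\cap U$ is open in $\|x\|$, hence non-meager iff non-empty, so $\delta^\ast[U]=\delta[U]$ is open; dually $\delta^\triangle[C]=X\setminus\delta[\om^\om\setminus C]$ is closed for closed $C$, since an open subset of $\|x\|$ is meager iff empty. For the inductive step I would write a $\tpbf{\Sigma}^0_\alpha$ set as $A=\bigcup_n A_n$ with $A_n\in\tpbf{\Pi}^0_{\beta_n}$ and $\beta_n<\alpha$. A countable union of meager sets is meager, so $\delta^\ast[A]=\bigcup_n\delta^\ast[A_n]$, and it suffices to bound each $\delta^\ast[A_n]$. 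The central device is \emph{localisation}: by the Baire-property fact above, $\delta^\ast[A_n]=\bigcup_{s\in\om^{<\om}}\delta_s^\triangle[A_n]$, where $\delta_s:=\delta\upto[s]$ is the restriction of $\delta$ to the clopen set $[s]$; this is a representation of the open set $\delta[[s]]\subseteq X$ whose fibre over $x$ is the open subspace $\|x\|\cap[s]$ of $\|x\|$ (again Polish) and which is again an open map, so the inductive hypothesis at level $\beta_n$ applies to it. Hence $\delta_s^\triangle[A_n]$ is $\tpbf{\Pi}^0_{\beta_n}$ in $\delta[[s]]$, and therefore, being the intersection of a $\tpbf{\Pi}^0_{\beta_n}$ subset of $X$ with the open set $\delta[[s]]$, it is a $\tpbf{\Sigma}^0_{\beta_n+1}$ subset of $X$. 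Since $\beta_n<\alpha$ gives $\beta_n+1\leq\alpha$, the union over $n$ and $s$ lies in $\tpbf{\Sigma}^0_\alpha$; the dual statement at level $\alpha$ follows by complementation, which closes the induction.

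This computation is essentially that of the complexity of the Cohen forcing relation over the fibres, carried out by recursion on the complexity of the set, so every manipulation — forming $\delta[[s]]$, complementation, localisation, countable unions and intersections — is uniform in codes, and running the recursion along the tree of a given $\tpbf{\Sigma}^0_\alpha$-code of $A$ yields a $\tpbf{\Sigma}^0_\alpha$-code of $\delta^\ast[A]$; this effective refinement is due to Callard and Hoyrup \cite{CaHo20}.

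The main obstacle will be the complexity bookkeeping in the inductive step, i.e.\ preventing the pointclass from drifting upward over transfinitely many applications of the transforms. This is exactly why one must carry the dual transform along — the identity $\delta^\ast[\bigcap_nA_n]=\bigcap_n\delta^\ast[A_n]$ is false, so $\tpbf{\Pi}^0_{\beta}$ sets cannot be treated directly by the $\delta^\ast$-induction and must be reached through $\delta^\triangle$ — and why the localised transforms $\delta_s^\triangle$, rather than $\delta^\triangle$ itself, must be used. It is also where the two hypotheses on $\delta$ are indispensable: openness both in the base case and for the fact that $\delta$ sends open sets to open sets inside the localisation, and Polish fibres for the Baire category and Baire-property arguments; without them the meager-based transforms misbehave. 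The passage from a $\tpbf{\Pi}^0_{\beta_n}$ set in the open subspace $\delta[[s]]$ to one in $X$ is the only place the level is allowed to increase, and it is harmless precisely because $\beta_n<\alpha$. The effective version adds only the routine task of checking that each of these steps is uniform in the relevant codes.
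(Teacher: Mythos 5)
Your argument is correct: the paper states this as a Fact and does not prove it, deferring to Saint-Raymond, de Brecht, and Callard--Hoyrup, and your simultaneous induction on $\alpha$ carrying the dual transform $\delta^\triangle$ and the localised restrictions $\delta_s=\delta\upto[s]$ is exactly the argument in those references (with the effective uniformity being Callard--Hoyrup's refinement). The only point worth flagging is that for a non-metrizable second-countable $T_0$ target $X$ one must read $\tpbf{\Sigma}^0_{\beta_n+1}$ in de Brecht's modified hierarchy (countable unions of differences of $\tpbf{\Sigma}^0_{\beta_n}$ sets) for the step ``$\tpbf{\Pi}^0_{\beta_n}$ intersected with open is $\tpbf{\Sigma}^0_{\beta_n+1}$'' to be valid, which it is.
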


Let $\bfS$ be a normal flowchart on an $\lang(Q)$-term $t$ over $\Baire$.
Then, we define the {\em $\delta$-Vaught transform of $\bfS$} as a flowchart $\bfS^\delta=(S^\delta_\sigma)_{\sigma\in\syn_t}$ on $t$ over $X$ as follows:
\begin{enumerate}
\item For a leaf $\sigma\in\syn_t$, $S^\delta_\sigma=X$.
\item If $\sigma\in\syn_t$ is labeled by $\mul$, then define $S_\sigma^\delta=\delta^\ast[S_\sigma]$.
\item If $\sigma\in\syn_t$ is labeled by $\ad$, then define $S_{\sigma,n}^\delta=\delta^\ast[S_{\sigma,n}]$ for each $n\in\om$.
\item If $\sigma\in\syn_t$ is labeled by $\phi_\alpha$, then $S^\delta_\sigma=X$.
\end{enumerate}

By Fact \ref{dBCH}, $\bfS^\delta$ is also a flowchart on $t$.
By Lemma \ref{lem:monotone}, one can assume that $\bfS$ is monotone.

\begin{prop}[Selivanov {\cite[Lemma 4.5]{Sel19b}}]\label{thm:vaught-transform}
Let $X$ be a second-countable $T_0$ space with an admissible representation $\delta$, and $f\colon X\to Q$ be a function.
If $\bfS$ is a monotone flowchart determining $f\circ\delta$, then $\bfS^\delta$ is a flowchart determining $f$.
\end{prop}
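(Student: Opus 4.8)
The plan is to trace the flow of a name through $\bfS$ and the flow of the corresponding point through $\bfS^\delta$ in parallel, showing that the true positions match up level by level in the syntax tree $\syn_t$. The key mediating fact is the defining property of the Vaught transform together with the assumption that $\delta$ is open with Polish (in particular Baire) fibers: for a point $x\in X$ and a name $p\in\|x\|$, we want to compare the event ``$p\in S_\sigma$'' (which governs branching in $\bfS$) with the event ``$x\in S_\sigma^\delta=\delta^\ast[S_\sigma]$'' (which governs branching in $\bfS^\delta$). These are not literally equivalent for a fixed name $p$, so the right statement to prove by induction is about the domain assignments: writing $(D_\sigma)_{\sigma\in\syn_t}$ for the domain assignment of $\bfS$ and $(D_\sigma^\delta)_{\sigma\in\syn_t}$ for that of $\bfS^\delta$, I would show by induction along $\syn_t$ that for every $\sigma$,
\[
\delta^\ast[D_\sigma]=D_\sigma^\delta,\qquad\text{and}\qquad \delta^{-1}[D_\sigma^\delta]\cap\mathrm{dom}(\delta)\text{ is comeager, in each fiber, exactly where }D_\sigma\text{ is}.
\]
Concretely the useful inductive invariant is: for each $x\in X$, $x\in D_\sigma^\delta$ iff $\|x\|\cap D_\sigma$ is non-meager in $\|x\|$, and in that case $\|x\|\cap D_\sigma$ is in fact comeager in $\|x\|$; moreover $\eval{\bfS}(p)$ is constant on a comeager subset of $\|x\|\cap D_\sigma$.

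The induction steps are then routine given monotonicity of $\bfS$. For the root, $D_{\langle\rangle}=\Baire$ and $D^\delta_{\langle\rangle}=X$, and $\|x\|$ is comeager in itself. For a node $\sigma$ labeled by $\phi_\alpha$, nothing branches: $D_{\sigma\fr 0}=D_\sigma$, $D^\delta_{\sigma\fr 0}=D^\delta_\sigma$, and the invariant passes through verbatim. For a node $\sigma$ labeled by $\mul$: by monotonicity $D_{\sigma\fr 1}=S_\sigma$ and $D_{\sigma\fr 0}=D_\sigma\setminus S_\sigma$, while $D^\delta_{\sigma\fr 1}=S^\delta_\sigma=\delta^\ast[S_\sigma]$ and $D^\delta_{\sigma\fr 0}=D^\delta_\sigma\setminus S^\delta_\sigma$. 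By the inductive hypothesis $\|x\|\cap D_\sigma$ is comeager in $\|x\|$ when $x\in D^\delta_\sigma$; intersecting with the Borel set $S_\sigma$, which is relatively Borel in the Polish space $\|x\|$, the Baire property gives that either $\|x\|\cap S_\sigma$ is comeager in some relative basic open set of $\|x\|$ or it is meager; the point is that it is non-meager in $\|x\|$ precisely when $x\in\delta^\ast[S_\sigma]$, and a standard $0$–$1$-type argument (using that $S_\sigma$ pulls back from $X$ only up to the fiber structure — this is where openness of $\delta$ enters, cf.\ the proof of Fact~\ref{dBCH}) upgrades ``non-meager'' to ``comeager in $\|x\|$''. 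This yields the invariant for $\sigma\fr 1$ when $x\in D^\delta_{\sigma\fr 1}$, and the complementary statement for $\sigma\fr 0$: if $x\notin\delta^\ast[S_\sigma]$ then $\|x\|\cap S_\sigma$ is meager in $\|x\|$, so $\|x\|\cap D_\sigma\setminus S_\sigma$ is still comeager in $\|x\|$. The node labeled by $\ad$ is handled the same way using countable additivity of the meager ideal: since $\bfS$ is total on the relevant fiber, $(S_{\sigma,n})_n$ covers $D_\sigma$, hence covers the comeager set $\|x\|\cap D_\sigma$, so by the Baire category theorem at least one $S_{\sigma,n}$ meets $\|x\|\cap D_\sigma$ non-meagerly, i.e.\ $x\in\bigcup_n S^\delta_{\sigma,n}$, giving totality of $\bfS^\delta$; and for the unique (or each) $n$ with $x\in S^\delta_{\sigma,n}$ the same comeagerness upgrade applies.

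With the invariant in hand, the conclusion follows at the leaves: if $\rho$ is a leaf that is a true position for $x$ with respect to $\bfS^\delta$, then $x\in D^\delta_\rho$, so $\|x\|\cap D_\rho$ is comeager, hence non-empty, so there is a name $p$ of $x$ reaching the leaf $\rho$ with respect to $\bfS$; thus $\eval{\bfS^\delta}(x)=\ell_t(\rho)=\eval{\bfS}(p)=f(\delta(p))=f(x)$. Conversely, determinism of $\bfS^\delta$ follows because two distinct true leaves $\rho,\rho'$ for $x$ would force $\|x\|\cap D_\rho$ and $\|x\|\cap D_{\rho'}$ to both be comeager in $\|x\|$ while $D_\rho\cap D_{\rho'}=\emptyset$, contradicting the Baire category theorem — and this also shows $\bfS$ gives a well-defined value on a comeager subset of each fiber, matching $f$. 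Totality of $\bfS^\delta$ was already observed at the $\ad$-step. Hence $\bfS^\delta$ is a (deterministic total) flowchart and $\eval{\bfS^\delta}=f$, as claimed.

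I expect the main obstacle to be the ``comeagerness upgrade'' at the $\mul$- and $\ad$-steps: arguing cleanly that non-meagerness of $\|x\|\cap S_\sigma$ in $\|x\|$ actually promotes to comeagerness of $\|x\|\cap S_\sigma$ relative to the comeager set $\|x\|\cap D_\sigma$. This is exactly the content that makes the Vaught transform behave functorially along branching, and it relies on $\delta$ being open (so that $S_\sigma=\delta^{-1}[\text{something}]$-type localization behaves well on fibers) together with the Baire property of Borel sets in the Polish fiber $\|x\|$ — essentially the same mechanism underlying Fact~\ref{dBCH} and Saint-Raymond's lemma. Everything else is bookkeeping along $\syn_t$, and monotonicity of $\bfS$ (available by Lemma~\ref{lem:monotone}) is what keeps the domains $D_\sigma$ equal to the assigned sets, so that the Vaught transform of the assigned set is literally the assigned set of $\bfS^\delta$.
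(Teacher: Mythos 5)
Your overall strategy --- induct along $\syn_t$ on the domain assignments and use the Baire category theorem in the Polish fibers $\|x\|$ --- is indeed the paper's strategy, but the inductive invariant you propose is too strong, and the step you yourself flag as the main obstacle, the ``comeagerness upgrade,'' is genuinely false. The sets $S_\sigma$ attached to $\bfS$ are arbitrary Borel subsets of $\om^\om$; they are not $\delta$-saturated and do not ``pull back from $X$,'' so there is no fiberwise zero--one law: $\|x\|\cap S_\sigma$ can perfectly well be non-meager yet not comeager in $\|x\|$ (already for a constant $f$ on a one-point space $X$ with $\delta$ the constant map, take $S_\sigma=[\langle 0\rangle]$ and give both leaves the same label). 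For the same reason your claimed equality $\delta^\ast[D_\sigma]=D^\delta_\sigma$ fails in the $\sigma\fr 0$ direction: that $\|x\|\cap(D_\sigma\setminus S_\sigma)$ is non-meager does not imply that $\|x\|\cap S_\sigma$ is meager. And since your determinism argument rests on two \emph{comeager} traces being forced to intersect, it collapses once the upgrade is gone --- two disjoint non-meager traces can coexist.

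What suffices, and what the paper actually proves, is only the one-directional inclusion $E_\sigma\subseteq\delta^\ast[D_\sigma]$, where $(E_\sigma)$ is the domain assignment of $\bfS^\delta$. The only category fact needed is $\delta^\ast[A]\setminus\delta^\ast[B]\subseteq\delta^\ast[A\setminus B]$ (non-meager minus meager is still non-meager in the Polish fiber), applied at the $\sigma\fr 0$ branch; the $\sigma\fr 1$ and $\ad$ branches use monotonicity ($D_{\sigma\fr 1}=S_\sigma$, $D_{\sigma\fr n}=S_{\sigma,n}$) so that the Vaught transform of the assigned set is literally $\delta^\ast[D_{\sigma\fr n}]$. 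Totality of $\bfS^\delta$ follows, as you say, from countable additivity of the meager ideal together with totality of $\bfS$ on $D_\sigma$. Determinism and the value at a leaf $\rho$ then come for free, with no disjointness argument: $x\in E_\rho\subseteq\delta^\ast[D_\rho]$ gives some name $p\in\|x\|\cap D_\rho$ (non-meager, hence non-empty), and since $\bfS$ determines the single-valued function $f\circ\delta$, the label of $\rho$ must equal $\eval{\bfS}(p)=f(\delta(p))=f(x)$; hence every true leaf for $x$ carries the same label $f(x)$. Replacing your two-sided comeager invariant by this one-sided non-meager inclusion is what closes the gap.
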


\begin{proof}
As above, one can assume that $\delta$ has Polish fibers, that is, $\|x\|$ is Polish for any $x\in X$.
Let $(D_\sigma)_{\sigma\in\syn_t}$ and $(E_\sigma)_{\sigma\in\syn_t}$ be the domain assignments to $\bfS$ and $\bfS^\delta$, respectively.
We first show the following claim:

\begin{claim}\label{claim:vaught-transform}
$E_\sigma\subseteq \delta^\ast[D_\sigma]$.
\end{claim}

\begin{proof}
We first observe that $\delta^\ast[A]\setminus\delta^\ast[B]\subseteq\delta^\ast[A\setminus B]$.
To see this, assume that $x\in\delta^\ast[A]\setminus\delta^\ast[B]$.
Then $x\in\delta^\ast[A]$ means that $\|x\|\cap A$ is not meager in $\|x\|$, and $x\not\in\delta^\ast[B]$ means that $\|x\|\cap B$ is meager, so $\|x\|\setminus B$ is comeager, in $\|x\|$.
Hence, by the Baire category theorem on the Polish fiber $\|x\|$, we see that $\|x\|\cap(A\setminus B)$ is not meager in $\|x\|$.
Therefore, we have $x\in\delta^\ast[A\setminus B]$.


By induction on the syntax tree $\syn_t$.
If $\sigma$ is labeled by $\mul$, then by induction hypothesis and the above observation on the difference of sets,
\[E_{\sigma\fr 0}=E_\sigma\setminus\delta^\ast[S_\sigma]\subseteq\delta^\ast[D_\sigma]\setminus\delta^\ast[S_\sigma]\subseteq\delta^\ast[D_\sigma\setminus S_\sigma]=\delta^\ast[D_{\sigma\fr 0}].\]

By monotonicity of $\bfS$, we have $D_{\sigma\fr 1}=S_\sigma$.
Hence, by induction hypothesis, 
\[E_{\sigma\fr 1}=E_\sigma\cap\delta^\ast[S_\sigma]\subseteq\delta^\ast[D_\sigma]\cap\delta^\ast[S_\sigma]=\delta^\ast[D_\sigma]\cap\delta^\ast[D_{\sigma\fr 1}]=\delta^\ast[D_{\sigma\fr 1}]\]
since $D_{\sigma\fr 1}\subseteq D_\sigma$.
The same argument applies to the case that $\sigma$ is labeled by $\ad$.
\end{proof}

Next, note that $\bfS$ must be a total flowchart over $X$ since $f$ is total.
Again, let $(D_\sigma)_{\sigma\in\syn_t}$ and $(E_\sigma)_{\sigma\in\syn_t}$ be the domain assignments to $\bfS$ and $\bfS^\delta$, respectively.
To show that $\bfS^\delta$ is also total, it suffices to show that $E_\sigma\subseteq\bigcup_{n\in\om}\delta^\ast[S_{\sigma,n}]$ whenever $\sigma$ is labeled by $\ad$.
By Claim \ref{claim:vaught-transform}, $x\in E_\sigma$ implies $x\in\delta^\ast[D_\sigma]$.
Hence, $\|x\|\cap D_\sigma$ is not meager in $\|x\|$.
By totality of $\bfS$, we have $D_\sigma\subseteq\bigcup_{n\in\om}S_{\sigma,n}$.
Therefore, by the Baire category theorem on $\|x\|$, there exists $n\in\om$ such that $\|x\|\cap S_{\sigma,n}$ is not meager in $\|x\|$.
This means that $x\in\delta^\ast[S_{\sigma,n}]$.
This concludes that $\bfS^\delta$ is total.


Let $\rho$ be a true path for $x$ with respect to $\bf{S}^\delta$; that is, $x\in E_\rho$.
If the leaf $\rho$ is labeled by $q_\rho$, then we have $\eval{\bfS^\delta}(x)=q_\rho$.
By the above claim, we also have $x\in\delta^\ast[D_\rho]$.
In particular, there is $p\in\delta^{-1}\{x\}$ such that $p\in D_\rho$.
Thus, $\eval{\bfS}(p)=q_\rho$.
By our assumption, $\bfS$ is a flowchart determining $f\circ\delta$, and therefore, $f\circ\delta(p)=\eval{\bfS}(p)$.
Hence,
\[\eval{\bfS^\delta}(x)=q_\rho=\eval{\bfS}(p)=f\circ\delta(p)=f(x).\]

This shows that $\bfS^\delta$ is a flowchart determining $f$.
\end{proof}

\begin{proof}[Proof of Theorem \ref{thm:Vaught-transform-main}]
It is not hard to check that the class $\tpbf{\Sigma}_t$ is closed under continuous substitution; that is, if $f\in\tpbf{\Sigma}_t(X)$, then $f\circ\delta\in\tpbf{\Sigma}_t(\|X\|)$.
For the converse direction, assume $f\circ\delta\in\tpbf{\Sigma}_t(\|X\|)$.
Then we have a flowchart $\bfS$ on $t$ determining $f\circ\delta$.
By Proposition \ref{thm:vaught-transform}, its $\delta$-Vaught transform $\bfS^\delta$ is a flowchart on $t$ determining $f$.
Consequently, $f$ is a $\tpbf{\Sigma}_t$-function.
\end{proof}

\section{Louveau-type effectivization}\label{sec:main}

\subsection{Main results}

In this section, we show that, if a Borel function between Polish spaces happens to be a $\tpbf{\Sigma}_t$ function, then its $\tpbf{\Sigma}_t$-code can be obtained from its Borel code in a hyperarithmetical manner.

\begin{theorem}\label{thm:Louveau-for-functions}
Let $X$ be a computable Polish space, $Q$ be a computable Polish space, and $t$ be a hyperarithmetical $\lang(Q)$-term.
Then, $\tpbf{\Sigma}_t(X)\cap\Delta^1_1=\Sigma_t(\Delta^1_1;X)$.
\end{theorem}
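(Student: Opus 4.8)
The plan is to prove the two inclusions separately; the substance is the Louveau-type direction $\tpbf{\Sigma}_t(X)\cap\Delta^1_1\subseteq\Sigma_t(\Delta^1_1;X)$, while the reverse inclusion is immediate. For the reverse inclusion, if $g=\eval{\bfS}$ for a $\Delta^1_1$ flowchart $\bfS$ on $t$, then $g\in\tpbf{\Sigma}_t(X)$ by definition, and $g$ is $\Delta^1_1$-measurable by Lemma~\ref{lem:complexity-flowchart}(1): there $g(x)=q_{G(x)}$ for a $\Delta^1_1$-measurable $G\colon X\to\Baire$, and ``$q_z\in B_e$'' is $\Delta^1_1$ in $(z,e)$ since $Q$ is computable Polish and every constant occurring in the hyperarithmetical term $t$ has a $\Delta^1_1$ index.

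For the nontrivial direction, fix a $\Delta^1_1$-measurable $g\colon X\to Q$ with $g\in\tpbf{\Sigma}_t(X)$. Replacing $t$ by an equivalent normal term (effectively, without changing $\tpbf{\Sigma}_t$), I assume $t$ normal; note $\rank(\sigma)<\om_1^{\mathrm{CK}}$ for every $\sigma\in\syn_t$, since the ordinals labelling the Veblen symbols of a hyperarithmetical term are below $\om_1^{\mathrm{CK}}$. Fix a deterministic total boldface flowchart $\bfS$ with $\eval{\bfS}=g$ and, by Lemma~\ref{lem:monotone}, take it monotone. (One may also first reduce to $X=\Baire$: by Fact~\ref{fact:total-representation} fix a computable total admissible $\delta\colon\Baire\to X$ with Polish fibres; then $g\circ\delta$ is a $\Delta^1_1$-measurable $\tpbf{\Sigma}_t(\Baire)$-function by Theorem~\ref{thm:Vaught-transform-main}, and a $\Delta^1_1$ flowchart for $g\circ\delta$ yields one for $g$ via its $\delta$-Vaught transform, using Lemma~\ref{lem:monotone}, Proposition~\ref{thm:vaught-transform} and Fact~\ref{dBCH}; over $\Baire$ one also gains Proposition~\ref{prop:zero-dim-partition}.)

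The core is to build a $\Delta^1_1$ flowchart $\bfS'$ on $t$ over $X$ with $\eval{\bfS'}=g$ by effective transfinite recursion down $\syn_t$, producing its monotone domain assignment $(E_\sigma)_{\sigma\in\syn_t}$ together with uniformly $\Delta^1_1$ codes. The invariant is: $E_{\langle\rangle}=X$; for non-leaf $\sigma$, $E_\sigma$ is $\tpbf{\Sigma}^0_{\rank(\sigma)}$ with a uniformly obtained $\Delta^1_1$ code; the $E$'s at the children of $\sigma$ cover $E_\sigma$; and $g\upto E_\sigma\in\tpbf{\Sigma}_{t_\sigma}(E_\sigma)$, where $t_\sigma$ is the subterm rooted at $\sigma$ (Borel ranks measured in $t$). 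At a leaf $\rho$ the last clause forces $g\equiv q_\rho$ on $E_\rho$, so a flowchart with this monotone domain assignment is total, has single-valued hence deterministic evaluation equal to $g$, and is $\Delta^1_1$, giving $g\in\Sigma_t(\Delta^1_1;X)$. A $\phi_\alpha$-node does nothing but raise the budget $\rank(\sigma)\mapsto\rank(\sigma)+\om^\alpha$, which by normality absorbs the single extra Borel level created at the adjacent $\mul$-node. At a $\mul$-node $\sigma$ with $t_\sigma=s\mul u$, the invariant provides a boldface $\tpbf{\Sigma}^0_{\rank(\sigma)}$ set $V\subseteq E_\sigma$ with $g\upto V\in\tpbf{\Sigma}_u(V)$ and $g\upto(E_\sigma\setminus V)\in\tpbf{\Sigma}_s(E_\sigma\setminus V)$. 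Since a $\tpbf{\Sigma}_u$- (resp.\ $\tpbf{\Sigma}_s$-) function takes values only among the labels $\bar L^u$ (resp.\ $\bar L^s$) of the leaves of $u$ (resp.\ $s$), we get $V\subseteq g^{-1}[\bar L^u]$, $E_\sigma\setminus V\subseteq g^{-1}[\bar L^s]$, and $g[E_\sigma]\subseteq\bar L^u\cup\bar L^s$; hence the $\Delta^1_1$ sets $P:=E_\sigma\setminus g^{-1}[\bar L^s]$ and $R:=E_\sigma\setminus g^{-1}[\bar L^u]$ are disjoint with $P\subseteq V\subseteq E_\sigma\setminus R$. Applying Louveau's separation theorem \cite{Lou80} (legitimate as $\rank(\sigma)<\om_1^{\mathrm{CK}}$) to the disjoint $\Sigma^1_1$ sets $P,R$ separated by the $\tpbf{\Sigma}^0_{\rank(\sigma)}$ set $V$ yields a $\tpbf{\Sigma}^0_{\rank(\sigma)}$ set $E_{\sigma\fr1}$ with a $\Delta^1_1$ code and $P\subseteq E_{\sigma\fr1}\subseteq E_\sigma\setminus R$; set $E_{\sigma\fr0}=E_\sigma\setminus E_{\sigma\fr1}$. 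The $\ad$-nodes are handled identically using the countable reduction form of Louveau's theorem, as in the proof of Proposition~\ref{prop:zero-dim-partition}.

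The hard part is the point just glossed over: when $\bar L^s$ and $\bar L^u$ overlap, $g$ does not reveal which branch of the original flowchart a point takes, and the crude $P,R$ above do not by themselves guarantee the invariant at the children — $E_{\sigma\fr1}$ may meet $E_\sigma\setminus V$ over the region $g^{-1}[\bar L^s\cap\bar L^u]$, where $g\upto E_{\sigma\fr1}\in\tpbf{\Sigma}_u$ can fail. Closing this is the technical heart: one must either pre-process $(t,\bfS)$ so that at every branching node the two sides carry disjoint label sets — without breaking normality, the rank bookkeeping, or the $\Delta^1_1$-measurability of $g$ — or else refine $P$ and $R$ into $\Sigma^1_1$ sets defined by a simultaneous recursion that also records the choices already fixed higher in $\syn_t$, so that Louveau's theorem still applies level by level while every $E_\sigma$ stays of Borel rank exactly $\rank(\sigma)$.
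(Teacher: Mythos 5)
You have correctly located the obstruction, but the proposal stops exactly where the paper's proof has to begin, so as it stands there is a genuine gap at what you yourself call the technical heart. Recovering the branching sets of a witness flowchart from the \emph{values} of $g$ cannot work: when the leaf-label sets of $s$ and $u$ overlap, your sets $P=E_\sigma\setminus g^{-1}[\bar L^s]$ and $R=E_\sigma\setminus g^{-1}[\bar L^u]$ carry no information on $g^{-1}[\bar L^s\cap\bar L^u]$, and a separator produced from these data may put points into $E_{\sigma\fr 1}$ on which $g$ is not $\tpbf{\Sigma}_u$. Neither of your two suggested repairs is carried out, and the first (forcing disjoint labels at every branching node) changes the class $\tpbf{\Sigma}_t$ in general, so it is not available. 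The paper closes precisely this gap by changing what is separated: instead of sets defined from the values of $g$, it introduces the $T_\xi$-open set $U^+$, the union of all $O\in\Sigma^1_1\cap\tpbf{\Sigma}^0_\xi$ such that \emph{some} $\tpbf{\Sigma}_{\phi_t(u)}$-function on $H\cap O$ dominates $f$ $\infty$-a.e.\ on $O$ --- i.e.\ it quantifies existentially over candidate witnesses on lightface neighborhoods rather than trying to read the witness off $g$. The induction hypothesis then converts that existential into one over $\Delta^1_1$ codes, making membership in $U^+$ a $\Pi^1_1$ condition on indices (Claim \ref{sublem1}); Louveau's approximation lemma (Fact \ref{approximate}) shows the true branching set is $\infty$-a.e.\ contained in $U^+$; and $\Pi^1_1$-reflection (Fact \ref{fact:reflection}) together with $\Delta^1_1$-selection (Fact \ref{fact:selection}) extract a $\Delta^1_1$ index set and glue the resulting $\Delta^1_1$ flowcharts. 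None of this machinery --- the Gandy--Harrington topologies $T_\xi$, reflection, selection --- appears in your recursion, and without it the step from $\sigma$ to its children cannot be completed.

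A second, related defect is in the formulation of your invariant. For the recursion to close, the statement proved by induction must be strengthened in two ways that the paper builds into Lemma \ref{thm:main-theorem}: it must be a \emph{domination} statement for partial functions on arbitrary $\Sigma^1_1$ domains $H$ (so that it can be applied uniformly to every candidate neighborhood $H\cap O$ inside a single $\Pi^1_1$ formula), and its hypothesis must be weakened to $\infty$-a.e.\ domination (since Fact \ref{approximate} only gives agreement up to a $T_\infty$-meager set). Your invariant ``$g\upto E_\sigma\in\tpbf{\Sigma}_{t_\sigma}(E_\sigma)$'' has neither feature, so even granting a fix at a single node, the induction would not propagate. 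Your easy direction, the normalization of $t$, and the reduction of general computable Polish $X$ to $\Baire$ via the Vaught transform do match the paper.
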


This can be viewed as a generalization of Louveau's theorem \cite[Theorem A]{Lou80}, which states that, if $\tpbf{\Gamma}$ is a Wadge class of $\Baire$ which has some $\Delta^1_1$-description, then $\Gamma(\Delta^1_1)=\tpbf{\Gamma}\cap \Delta^1_1$.
Essentially this is a special case of Theorem \ref{thm:Louveau-for-functions} with $Q=\{0,1\}$, although our term-description is different from Louveau's one.
One can also show that Theorem \ref{thm:Louveau-for-functions} holds for computable quasi-Polish spaces.
Here, roughly speaking, a computable quasi-Polish space is a represented second countable $T_0$ space which is the image of a computable open surjection from $\Baire$ (which almost corresponds to the effective version of Fact \ref{fact:total-representation}; see also \cite{BPS,HRSS}).

To further generalize Louveau's theorem, let us note that separating pairwise disjoint sets is a special case of extending a partial function.
To see this, let us identify a subset $S$ of $X$ with its characteristic function $\chi_S\colon X\to\{0,1\}$.
Then, for example, the Lusin's separation theorem for $\tpbf{\Sigma}^1_1$ sets is interpreted as the statement saying that there exists a total $2$-valued $\tpbf{\Delta}^1_1$-function which extends a given partial $2$-valued $\tpbf{\Sigma}^1_1$-function. 

Note that if the codomain $Q$ is countable, $\tpbf{\Sigma}^1_1$-functions and $\tpbf{\Pi}^1_1$-functions are in fact the same class since $f(x)=q$  if and only if, for any $p\in Q$, $p\not=q$ implies $f(x)\not=q$.
Hence, instead of $\tpbf{\Sigma}^1_1$-functions, one may consider $\tpbf{\Pi}^1_1$-functions with $\tpbf{\Sigma}^1_1$-domains.
We show the following generalization of Louveau's separation theorem (see Fact \ref{fact:Louveau-sep}):


\begin{theorem}[Effective extension]\label{thm:main-theorem-extension}
Let $Q$ be a computable Polish space, $t(\bar{x})$ be a hyperarithmetical $\lang(Q)$-term, and $f\pcolon\Baire\to Q$ be a partial $\Pi^1_1$-measurable function with a $\Sigma^1_1$ domain.
Suppose that $f$ can be extended to a total $\tpbf{\Sigma}_{t}$ function $g\colon\Baire\to Q$.
Then, $f$ can be extended to a total ${\Sigma}_{t}(\Delta^1_1)$ function $g^\star\colon\Baire\to Q$.
\end{theorem}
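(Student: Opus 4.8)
The plan is to reduce everything to Louveau's separation theorem (Fact~\ref{fact:Louveau-sep}) and to build a $\Delta^1_1$ flowchart realizing $g^\star$ by an effective well-founded recursion along the syntax tree $\syn_t$; this mirrors the structure of the proof of Proposition~\ref{thm:vaught-transform} (transforming the sets attached to a flowchart) and of Louveau's original argument. First I would make the standard reductions: after replacing $t$ by an equivalent normal term (effectively in its code), Lemma~\ref{lem:monotone} and Proposition~\ref{prop:zero-dim-partition} let me assume that the hypothesis ``$f$ extends to a total $\tpbf{\Sigma}_t$-function $g$'' is witnessed by a \emph{monotone reduced} total flowchart $\bfS=(S_\sigma)_{\sigma\in\syn_t}$ over $\Baire$ (so its branches at $\ad$-nodes partition the corresponding domain, hence $\bfS$ is deterministic), with domain assignment $(D_\sigma)_{\sigma\in\syn_t}$. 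Write $A=\mathrm{dom}(f)$, a $\Sigma^1_1$ set; since $g\upto A=f$ and $\bfS$ is deterministic, for each $x\in A$ the unique true path of $x$ with respect to $\bfS$ is a leaf labelled by $f(x)$.

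Next I would construct $\bfS^\star=(S^\star_\sigma)_{\sigma\in\syn_t}$, with domain assignment $(D^\star_\sigma)_{\sigma\in\syn_t}$, by recursion on $\sigma\in\syn_t$ along its well-founded tree order, maintaining: \emph{(i)} $D^\star_\sigma$ is $\Delta^1_1$ and $\tpbf{\Sigma}^0_{\rank(\sigma)}$, with the single exception that $D^\star_{\sigma\fr 0}$ may only be $\tpbf{\Sigma}^0_{\rank(\sigma)+1}$ when $\sigma$ is labelled $\mul$ --- which by normality of $t$ is harmless, since $\sigma\fr 0$ is then a leaf or a $\phi_\alpha$-node and in the latter case $\rank$ jumps by $\om^\alpha\ge 1$ below it, absorbing the extra level; \emph{(ii)} $A\cap D^\star_\sigma=A\cap D_\sigma$; and \emph{(iii)} $D^\star_\sigma\subseteq D_\sigma$. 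At a leaf $\rho$ I put $S^\star_\rho=\Baire$ and relabel $\rho$ by the point with the Cauchy name obtained by greedily picking shrinking rational balls $B_e$ with $A\cap D^\star_\rho\subseteq f^{-1}[B_e]$; since this condition is $\Pi^1_1$ with $\Sigma^1_1$ negation the name is $\Delta^1_1$, and when $A\cap D^\star_\rho\neq\emptyset$ --- which by \emph{(ii)}, \emph{(iii)} and determinism of $\bfS$ is the only relevant case --- it names the constant value $f$ takes there. At a $\phi_\alpha$-node I put $S^\star_\sigma=\Baire$, $D^\star_{\sigma\fr 0}=D^\star_\sigma$. At a $\mul$-node I split $D^\star_\sigma$ into a $\Delta^1_1$-$\tpbf{\Sigma}^0_{\rank(\sigma)}$ set $S^\star_\sigma$ and its relative complement so that the $A$-part is routed exactly as by $\bfS$ while staying inside $D_\sigma$; at a $\ad$-node I produce a $\Delta^1_1$ partition of $D^\star_\sigma$ into $\tpbf{\Sigma}^0_{\rank(\sigma)}$ pieces $(S^\star_{\sigma,n})_n$ extending the partition of $A\cap D^\star_\sigma$ induced by $\bfS$. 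In both cases the needed set (resp.\ reducing family) is a $\tpbf{\Sigma}^0_{\rank(\sigma)}$ object whose existence is guaranteed --- non-uniformly --- by $\bfS$, and Louveau's separation theorem converts this existence into one with a $\Delta^1_1$ code (for the $\ad$-case, in its ``reduction'' form, using the reduction property of $\tpbf{\Sigma}^0_{\rank(\sigma)}$ on $\Baire$); intersecting with $D^\star_\sigma$ restores \emph{(iii)}, and the partition property restores totality. Diagram chases as in the proof of Theorem~\ref{prop:matryo-flow} then give \emph{(i)}--\emph{(iii)} at the children.

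Finally I would check that $\bfS^\star$ is a genuine flowchart on $t$ over $\Baire$, that it is total and deterministic (the $\ad$-branches being genuine partitions), that it is $\Delta^1_1$ --- because $\syn_t$ is a $\Delta^1_1$ well-founded tree, the recursion has length below $\om_1^{\mathrm{CK}}$, and each step is a uniform application of the effective form of Louveau's separation theorem plus a uniformly $\Delta^1_1$ leaf-relabelling --- and that $\eval{\bfS^\star}$ extends $f$: by \emph{(ii)}, for $x\in A$ the true positions of $x$ with respect to $\bfS^\star$ are exactly those with respect to $\bfS$, so the true paths coincide and their common terminal leaf is labelled by $f(x)$ in both flowcharts. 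Hence $g^\star:=\eval{\bfS^\star}$ is a total $\Sigma_t(\Delta^1_1)$-function extending $f$, and (via Lemma~\ref{lem:complexity-flowchart}) a $\Delta^1_1$ code for it is recovered from the construction.

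The step I expect to be the real obstacle is the $\mul$- and $\ad$-steps, precisely because the witnessing flowchart $\bfS$ is given only non-uniformly: its sets $S_\sigma$ are boldface Borel with no effective code, so the construction cannot literally mention them. The content of Louveau's separation theorem is exactly that, from the \emph{existence} of a $\tpbf{\Sigma}^0_\xi$ set (or reducing family) satisfying conditions expressed through the $\Sigma^1_1$ set $A$ and the $\Pi^1_1$-measurable data of $f$, one extracts one with a $\Delta^1_1$ code depending only on those data; the delicate part is to phrase the ``first step'' at each node of $\syn_t$ as such an instance, while simultaneously keeping $\bfS^\star$ total away from $A$ and controlling Borel ranks through the $\phi_\alpha$-nodes --- and it is here that the hypothesis $X=\Baire$ is used, via zero-dimensionality and the reduction property of $\tpbf{\Sigma}^0_\xi$.
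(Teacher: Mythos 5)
You have located the right pressure point, but the step you flag at the end is not a technical nuisance that Louveau's separation theorem quietly absorbs; it is a genuine gap, and the paper uses a different mechanism to get around it.

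Your plan rests on the invariant $A\cap D^\star_\sigma=A\cap D_\sigma$, i.e.\ on making $\bfS^\star$ route the points of $A$ through exactly the same branches as the given (non-effective) witnessing flowchart $\bfS$. To maintain this at a $\mul$-node $\sigma$ you need a $\Delta^1_1$, $\tpbf{\Sigma}^0_{\rank(\sigma)}$ set $S^\star_\sigma$ separating $A\cap D_\sigma\cap S_\sigma$ from $A\cap D_\sigma\setminus S_\sigma$. Louveau's separation theorem (Fact~\ref{fact:Louveau-sep}) applies to disjoint $\Sigma^1_1$ sets; but $A\cap D_\sigma\cap S_\sigma$ and $A\cap D_\sigma\setminus S_\sigma$ are $\Sigma^1_1$-intersected-with-boldface-Borel, and in general they are not $\Sigma^1_1$. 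Worse, they cannot even be \emph{defined} from the data you are allowed to use ($A$ and the $\Pi^1_1$-measurability of $f$): the routing of $\bfS$ is not determined by $f$. Concretely, for $t=0\mul(0\ad 1)$ the witness $g$ may take the value $0$ both on the $\mul$-reject side and inside the $\ad$-branch; then no formula mentioning only $A$ and $f$ pins down which side of $S_{\sigma'}$ a given $x\in A\cap f^{-1}(0)$ lies on, so the ``two $\Sigma^1_1$ sets to be separated'' simply do not exist. Your closing remark that Louveau's theorem effectivizes ``existence of a $\tpbf{\Sigma}^0_\xi$ object satisfying conditions expressed through $A$ and $f$'' overstates Fact~\ref{fact:Louveau-sep}: that fact is exactly about separating two already-given $\Sigma^1_1$ sets, not about extracting $\Delta^1_1$ witnesses from a general $\Sigma^1_2$-looking existential.

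The paper's proof replaces your matching invariant with one that \emph{is} expressible from $(A,f)$: for the relevant subterm it asks whether some $\tpbf{\Sigma}_{\phi_t(u)}(\Delta^1_1)$-function $\lhd$-dominates $f$ on a piece, and takes the $T_\xi$-open set $U^+$ of all points with a $\Sigma^1_1\cap\tpbf{\Sigma}^0_\xi$ neighbourhood where this happens. It is not required that $U^+$ equal (or track) the boldface $S_{\sigma'}$ of $\bfS$; one only needs $S_{\sigma'}\subseteq U^+$ modulo a $T_\infty$-meager set, which follows from Louveau's approximation lemma (Fact~\ref{approximate}) plus the Baire category theorem for $T_\infty$ (Fact~\ref{fact:GH-Baire}). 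Passing from $U^+$ (a $\Pi^1_1$-parametrized union of $\Delta^1_1$-coded $\tpbf{\Sigma}^0_\xi$ sets) to a genuinely $\Delta^1_1$ witness is then done by $\Pi^1_1$-reflection (Fact~\ref{fact:reflection}), not by Louveau separation, and the resulting countably many subproblems are uniformized by $\Delta^1_1$-selection (Fact~\ref{fact:selection}). Louveau separation is used, but only as a small auxiliary step inside the equivalences defining $U^+$. Note also that to make the induction close, the paper has to strengthen the statement to $\infty$-a.e.\ $\lhd$-domination on an arbitrary $\Sigma^1_1$ subdomain (Lemma~\ref{thm:main-theorem}); your version, which keeps ``extends $f$ exactly on $A$'' as the loop invariant, does not give a self-sustaining inductive hypothesis once you restrict to the pieces carved out at the neck node. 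These four ingredients --- the $T_\xi$ approximation, the ``best-possible'' set $U^+$, $\Pi^1_1$-reflection, and $\Delta^1_1$-selection --- are exactly what is missing from the proposal.
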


Indeed, Theorem \ref{thm:main-theorem-extension} can be viewed as a functional version of Louveau's theorem \cite[Theorem 2.7]{Lou80} for Borel Wadge classes.
However, our language $\lang(Q)$ is designed as a tool to discuss the case where $Q$ is a quasi-ordered set.
In such a case, it is natural to deal with the domination theorem rather than the extension theorem.
Let $\leq_Q$ be a quasi-order on $Q$.
For sets $A\subseteq B$ and functions $f\colon A\to Q$ and $g\colon B\to Q$, we say that {\em $f$ is $\leq_Q$-dominated by $g$} if $f(x)\leq_Qg(x)$ for any $x\in A$.

\begin{theorem}[Effective domination]\label{thm:main-theorem-domination}
Let $\leq_Q$ be a $\Pi^1_1$ quasi-order on a computable Polish space $Q$, $t(\bar{x})$ be a hyperarithmetical $\lang(Q)$-term, and $f\pcolon\om^\om\to Q$ be a partial $\Pi^1_1$-measurable function with a $\Sigma^1_1$ domain.
Suppose that $f$ is $\leq_Q$-dominated by some total $\tpbf{\Sigma}_{t}$ function $g\colon\om^\om\to Q$.
Then, $f$ is $\leq_Q$-dominated by some total ${\Sigma}_{t}(\Delta^1_1)$ function $g^\star\colon\om^\om\to Q$.
\end{theorem}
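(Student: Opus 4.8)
The plan is to adapt the argument behind Theorem~\ref{thm:main-theorem-extension} --- of which the present statement is the instance in which $\leq_Q$ is equality --- the two ingredients being the flowchart representation of $\tpbf{\Sigma}_t$ and Louveau's separation theorem (Fact~\ref{fact:Louveau-sep}). We may assume $t$ is normal (every term is equivalent, effectively, to a normal one). First I would realize the dominating function $g$ as the evaluation $\eval{\bfS}$ of a total deterministic flowchart $\bfS=(S_\sigma)_{\sigma\in\syn_t}$ on $t$ over $\Baire$ with $S_\sigma\in\tpbf{\Sigma}^0_{\rank(\sigma)}$, and by Lemma~\ref{lem:monotone} and Proposition~\ref{prop:zero-dim-partition} I may take $\bfS$ monotone with pairwise disjoint $\ad$-branches. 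Put $F={\rm dom}(f)$, a $\Sigma^1_1$ set; for $x\in\Baire$ write $\rho_\bfS(x)$ for the leaf to which $\bfS$ routes $x$ and $q_\rho$ for the label of a leaf $\rho$. The hypothesis then says $f(x)\leq_Q q_{\rho_\bfS(x)}$ for every $x\in F$. (If $t$ carries free variables, I also fix the substitution of constants from $Q$ witnessing $g\in\tpbf{\Sigma}_t$; the construction will yield a $\Delta^1_1$ code of a suitable substitution along with everything else.)

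The core is a recursion along $\syn_t$ from the root downward, producing a flowchart $\bfS^\star=(S^\star_\sigma)_{\sigma\in\syn_t}$ in which the set attached to $\sigma$ has a $\Delta^1_1$ $\tpbf{\Sigma}^0_{\rank(\sigma)}$-code; this is uniform because $\rank(\sigma)$ is uniformly $\Delta^1_1$ in the (hyperarithmetic) code of $t$. Letting $(D_\sigma)$ and $(D^\star_\sigma)$ be the domain assignments of $\bfS$ and $\bfS^\star$, the invariant maintained at $\sigma$ is: $D^\star_\sigma$ is hyperarithmetic, $D_\sigma\cap F\subseteq D^\star_\sigma$, and $f\upto(D^\star_\sigma\cap F)$ is still $\leq_Q$-dominated by some $\tpbf{\Sigma}_{t_\sigma}$-function, where $t_\sigma$ is the subterm of $t$ rooted at $\sigma$. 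At a $\mul$- or $\ad$-node the Borel set(s) must be replaced by $\tpbf{\Sigma}^0_{\rank(\sigma)}$-sets with $\Delta^1_1$ codes: I would feed to Louveau's separation theorem a pair of disjoint $\Sigma^1_1$ sets encoding ``the $F$-points below $\sigma$ that must be routed into this branch'' and ``the $F$-points below $\sigma$ that must be routed away from it'', built from $F$, the $\Pi^1_1$-measurable graph of $f$, the quasi-order $\leq_Q$, and the already-fixed hyperarithmetic sets $D^\star_\tau$ $(\tau\preceq\sigma)$; the original Borel set witnesses that a $\tpbf{\Sigma}^0_{\rank(\sigma)}$ separator exists, so the theorem returns one with a $\Delta^1_1$ code. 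A $\phi_\alpha$-node needs no replacement (its set is the whole space), but the Borel rank available below it jumps, matching the ranks of the separators created further down. At a leaf $\rho$ the invariant forces $f(x)\leq_Q q_\rho$ for all $x\in D^\star_\rho\cap F$, which is exactly $\leq_Q$-domination of $f$ by $\eval{\bfS^\star}$.

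I expect the main obstacle to be ensuring that these separation instances are genuinely $\Sigma^1_1$ --- which is precisely where the hypotheses that $\leq_Q$ is $\Pi^1_1$ and that $f$ is $\Pi^1_1$-measurable with $\Sigma^1_1$ domain are used. The naive description of ``the $F$-points routed here'' involves a quantifier over the value $f(x)$ and over what is $\leq_Q$-achievable in the several subtrees, which is not visibly $\Sigma^1_1$; I would tame it using that on $F$ the relation ``$f(x)\in B_e$'' is $\Pi^1_1$ with $\Sigma^1_1$ complement, that $\leq_Q$ and the graph of $f$ are $\Pi^1_1$, and an application of $\Sigma^1_1$-uniformization/selection (the tool already used in Proposition~\ref{prop:zero-dim-partition}) to pin down, for each such $x$, a single branch into which it is routed; the hyperarithmeticity of the $D^\star_\tau$ then keeps the remaining set operations inside $\Sigma^1_1$. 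A second, more bookkeeping-like point of care is totality: $\bfS$ is total over all of $\Baire$ and we need $\bfS^\star$ total so that $\eval{\bfS^\star}$ is a genuine total function, yet Louveau's theorem only controls the separators on the $F$-points, not on the off-$F$ residue at $\ad$-nodes. Since that residue $D^\star_\sigma\setminus\bigcup_{n\geq1}S^\star_{\sigma,n}$ is itself hyperarithmetic, it can be absorbed into the positive side of the separation problem for the zeroth branch, and Proposition~\ref{prop:zero-dim-partition} --- which disjointifies a $\Delta^1_1$-sequence of $\tpbf{\Sigma}^0_{\rank(\sigma)}$-sets while preserving the union and remaining in $\tpbf{\Sigma}^0_{\rank(\sigma)}$ --- is used both to recover disjointness and to keep the Borel rank under control; the separators are arranged so that each $F$-point lies in a unique branch, so the disjointification does not disturb the routing of $F$. (Alternatively one could first build a flowchart total on $F$, observe by Lemma~\ref{lem:complexity-flowchart} that ``deterministic and total with domain $\supseteq F$'' is a $\Pi^1_1$ condition on codes, and extract a $\Delta^1_1$ such code by $\Sigma^1_1$-bounding.)

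Finally, $\eval{\bfS^\star}$ is then a single-valued total function which is $\Sigma_t(\Delta^1_1)$ by definition and $\leq_Q$-dominates $f$, as required; since every step is uniform, a $\Delta^1_1$ code of $g^\star$ is produced hyperarithmetically from a Borel code of $g$ together with codes for $\leq_Q$ and $f$. Specializing $\leq_Q$ to equality recovers Theorem~\ref{thm:main-theorem-extension}, and further specializing to $Q=\{0,1\}$ with the discrete order recovers Louveau's theorem.
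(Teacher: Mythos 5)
Your overall architecture (recurse along $\syn_t$, replace each boldface $\tpbf{\Sigma}^0_{\rank(\sigma)}$ set by a $\Delta^1_1$-coded one, keep an invariant relating the new domain assignment to the old one on ${\rm dom}(f)$) points in the right direction, but the step you yourself flag as ``the main obstacle'' is a genuine gap, and the tools you propose do not close it. To apply Louveau's separation theorem (Fact \ref{fact:Louveau-sep}) at a $\mul$-node you must exhibit two disjoint \emph{lightface} $\Sigma^1_1$ sets. The natural candidates --- ``the $F$-points that must be routed into the $u$-branch'' and ``\dots into the $s$-branch'' --- are not lightface: read off the original flowchart, they involve the boldface sets $S_\sigma$ and $D_\sigma$; read semantically, they involve a quantifier over all $\tpbf{\Sigma}_{t_{\sigma\fr i}}$-functions, i.e.\ over arbitrary boldface codes, which lands you around $\Sigma^1_2/\Pi^1_2$. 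Neither $\Pi^1_1$-measurability of $f$, nor $\Pi^1_1$-ness of $\leq_Q$, nor $\Delta^1_1$-selection reduces this to $\Sigma^1_1$; selection pins down a value of a \emph{total $\Pi^1_1$ relation on $X\times\om$}, not a branch chosen by a boldface Borel condition. The paper's proof exists precisely to get around this: it strengthens the inductive statement to an ``$\infty$-almost-everywhere domination'' version (Lemma \ref{thm:main-theorem}) formulated with the Gandy--Harrington topology $T_\infty$ and its subtopologies $T_\xi$, defines $U^+$ as the union of all lightface $\Sigma^1_1\cap\tpbf{\Sigma}^0_\xi$ sets $O$ on which $f$ is $\infty$-a.e.\ dominated by the relevant subterm class, uses the \emph{induction hypothesis} to convert the boldface existential ``some $\tpbf{\Sigma}_{\phi_t(u)}$-function'' into a lightface existential over $\Delta^1_1$-coded flowcharts (making membership in $U^+$ a $\Pi^1_1$ condition on indices, Claim \ref{sublem1}), and then extracts the $\Delta^1_1$ object by $\Pi^1_1$-\emph{reflection} (Fact \ref{fact:reflection}), not by Louveau separation, which appears only in a subsidiary role. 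Louveau's approximation lemma (Fact \ref{approximate}) and the Baire category theorem for $T_\infty$ are what guarantee $U\subseteq U^+$ up to a $T_\infty$-meager set, and this is exactly why the inductive statement must carry the $\infty$-a.e.\ weakening: after passing to $H\setminus U^+$ the domination by the $s$-branch only holds $\infty$-a.e. Your invariant, stated with genuine (everywhere) domination, cannot be propagated through a $\mul$-node.

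A second, smaller issue: at an $\ad$-node the problem is not a two-set separation but a countable covering problem, so even granting lightface inputs, Fact \ref{fact:Louveau-sep} is the wrong instrument there; the paper instead reflects the $\Pi^1_1$-on-$\Pi^1_1$ statement ``the sets indexed by $Y$ cover $H$'' and then uses $\Delta^1_1$-selection plus the disjointification of Proposition \ref{prop:zero-dim-partition} to assemble a single deterministic $\Delta^1_1$ flowchart from the countably many pieces. Your totality fix (absorbing the residue into the zeroth branch) is not how the paper argues either, but that part is repairable; the missing $T_\infty$/reflection machinery and the missing $\infty$-a.e.\ strengthening of the induction are not.
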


Note that $g$ extends $f$ if and only if $f$ is $=$-dominated by $g$.
Thus, Theorem \ref{thm:main-theorem-domination} is more general than Theorem \ref{thm:main-theorem-extension}.
Effective Domination Theorem \ref{thm:main-theorem-domination} can be applied, for example, to ordinal-valued functions; see also Example \ref{ex:ordinal-valued-func}.
For studies on ordinal-valued functions in the context of Wadge degrees, see also \cite{Dup03,Blo14}.

As another different type of application of our main techniques, we next consider decomposability of Borel functions.
The study of decomposability of Borel functions originates from Lusin's old question asking whether any Borel function can be written as a union of countably many partial continuous functions.
In the study of decomposability of Borel function, it is known that computability-theoretic analysis plays an important role, as shown in Kihara \cite{Kih15} and Gregoriades-Kihara-Ng \cite{GKN21}.
The decomposition of a Borel function into partial continuous functions along a flowchart has been studied in \cite{Kih16}.
One can express such a decomposition as an $\lang(\mathcal{C})$-term where each partial continuous function is considered as a constant symbol, i.e., $\mathcal{C}$ is the set of partial continuous functions.

More precisely, for an $\lang(\mathcal{C})$-term, a function $f\colon X\to Y$ is in $\tpbf{\Sigma}_t$ if there exists a flowchart $\bfS$ on $t$ such that, for any $x\in X$ and true path $\rho$ for $x$ w.r.t.~$\bfS$, if $\rho$ is labeled by $h$, then $f(x)=h(x)$.
Note that $\bfS$ is not necessarily deterministic in the sense of Section \ref{sec:flowchart}, but still deterministic in a certain sense.
A $\tpbf{\Sigma}_t$-function is clearly Borel-piecewise continuous.
However, this definition is too restrictive as the constant symbols in the term $t$ already tell us what partial continuous functions it can be decomposed into.
To weaken this restriction, we give another definition of $\tpbf{\Sigma}_t$-piecewise continuity using an open term, which make the part of partial continuous functions undetermined.

Recall that variable symbols are indexed as $(x_j)_{j\in\Baire}$.
For an open $\lang$-term $t(\bar{x})$ and a set $Q$, a {\em $Q$-valuation of $t(\bar{x})$} is a sequence $\bar{a}=(a_j)_{j\in\om^\om}$ where $a_j\in Q$ for any $j\in\Baire$.
Then, by $t(\bar{a})$, we denote the result of substituting $a_j$ for $x_j$ in $t(\bar{x})$ for each $j\in\Baire$.
We say that a function $f\pcolon X\to Y$ is {\em $\tpbf{\Sigma}_{t(\bar{x})}$-piecewise continuous} if there exists a $\mathcal{C}$-valuation $\bar{a}$ such that $f$ is in $\tpbf{\Sigma}_{t(\bar{a})}$.
Then, $f$ is {\em $\tpbf{\Sigma}_{t(\bar{x})}$-piecewise $\Delta^1_1$-continuous} if such an $a_j$ is a $\Delta^1_1$-continuous function with a $\Delta^1_1$-domain for any $j\in\Baire$.
We say that $f$ is {\em ${\Sigma}_{t(\bar{x})}(\Delta^1_1)$-piecewise continuous} if such a $\mathcal{C}$-valuation $\bar{a}\colon\Baire\to\mathcal{C}$ is $\Delta^1_1$-measurable and $f$ is in $\Sigma_{t(\bar{a})}(\Delta^1_1)$, where $\mathcal{C}$ is considered as a represented space as usual (see \cite{CCA}).

\begin{theorem}[Effective decomposition]\label{thm:decomposition}
Let $X$ be a computable Polish space and $t(\bar{x})$ be a hyperarithmetical $\lang$-term.
If $f\colon X\to Y$ is $\Delta^1_1$-measurable and $\tpbf{\Sigma}_{t(\bar{x})}$-piecewise $\Delta^1_1$-continuous, then it is ${\Sigma}_{t(\bar{x})}(\Delta^1_1)$-piecewise continuous.
\end{theorem}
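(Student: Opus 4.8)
The plan is to reduce Theorem~\ref{thm:decomposition} to the Effective Extension Theorem~\ref{thm:main-theorem-extension} (or to an appropriate variant of it for $\lang(\mathcal{C})$-terms) by treating a $\mathcal{C}$-valuation as data that can itself be recovered hyperarithmetically from the hypothesis. First I would unpack the hypothesis: since $f$ is $\tpbf{\Sigma}_{t(\bar{x})}$-piecewise $\Delta^1_1$-continuous, there is a $\mathcal{C}$-valuation $\bar{a}=(a_j)_{j\in\Baire}$ with each $a_j$ a $\Delta^1_1$-continuous partial function (on a $\Delta^1_1$ domain) such that $f\in\tpbf{\Sigma}_{t(\bar{a})}(X)$; equivalently, identifying the constant symbols, $t(\bar{a})$ is a closed $\lang(\mathcal{C})$-term and $f$ is determined by some total flowchart $\bfS$ on $t(\bar{a})$. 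Because $t(\bar{x})$ is a hyperarithmetical $\lang$-term, only countably many variable symbols $x_{j}$ actually occur in $\syn_{t(\bar{x})}$, and they occur at $\Delta^1_1$-identifiable leaf positions, so we are really only substituting countably many partial continuous functions; the remaining $a_j$ are irrelevant. This localizes the problem to finitely/countably much data.

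Next I would set up the induced extension problem. Working over $\Baire$ (first pulling back along a total admissible representation $\delta_X$ as in Corollary~\ref{cor:symbolic-Wadge} and Theorem~\ref{thm:Vaught-transform-main}, so that $X$ may be replaced by $\om^\om$), I want to combine the ``shape'' datum $\bfS$ on $t(\bar x)$ with the ``label'' datum $\bar a$ into a single function whose being $\tpbf{\Sigma}_t$-ish is exactly the hypothesis, and then apply effectivization. Concretely: run the flowchart $\bfS$ on $t(\bar x)$; each true path ends at a leaf $\rho$ labeled by a variable $x_{j(\rho)}$, and the value of $f$ there is $a_{j(\rho)}(x)$. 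So I would code, along the flowchart, the pair $(x,\ \text{index of the true leaf})$; this produces a partial $\Delta^1_1$-measurable map $x\mapsto (j(\rho), x)$, and composing with the evaluation of the $\mathcal{C}$-valuation recovers $f$. The point is to view ``$\bar a$ is a $\mathcal C$-valuation witnessing $f\in\tpbf\Sigma_{t(\bar a)}$'' as a $\tpbf\Sigma_t$-type (or flowchart-with-continuous-labels-type) statement about a $\Delta^1_1$ object, so that Theorem~\ref{thm:main-theorem-extension}'s machinery — the transformation of the sets assigned to a flowchart, together with Louveau-style selection of hyperarithmetical codes separating $\Sigma^1_1$ sets — applies. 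The conclusion to extract is a $\Sigma_t(\Delta^1_1)$-flowchart on $t(\bar x)$ together with a $\Delta^1_1$-measurable $\mathcal{C}$-valuation $\bar a^\star\colon\Baire\to\mathcal C$ such that $f\in\Sigma_{t(\bar a^\star)}(\Delta^1_1)$, which is precisely ${\Sigma}_{t(\bar x)}(\Delta^1_1)$-piecewise continuity.

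The step I expect to be the main obstacle is the non-determinism and the ``still deterministic in a certain sense'' clause: unlike the $Q$-valued setting, here two true paths for the same $x$ may be labeled by different variables $x_j, x_{j'}$, and consistency requires $a_j(x)=a_{j'}(x)$ on the overlap rather than $x_j=x_{j'}$ as symbols. So when I replace $\bar a$ by an effective $\bar a^\star$ via a uniformization/Louveau-selection argument, I must do it so that these coherence conditions are preserved. I would handle this by first applying Proposition~\ref{prop:zero-dim-partition} to make the $\ad$-branches pairwise disjoint and Lemma~\ref{lem:monotone} to make the flowchart monotone, which reduces the overlaps to a controlled situation along the tree, and then run the selection ``leaf by leaf'' down the (countable, $\Delta^1_1$) syntax tree, choosing the $\Delta^1_1$-continuous approximant $a^\star_{j(\rho)}$ last, on the $\Sigma^1_1$ set of $x$'s actually routed to $\rho$, so that the required agreements are automatic. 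A secondary technical point is checking that ``$a_j$ is $\Delta^1_1$-continuous with $\Delta^1_1$ domain'' is itself a $\Pi^1_1$-on-$\Sigma^1_1$ condition amenable to the complexity bookkeeping in Lemma~\ref{lem:complexity-flowchart}, so that the Kreisel-selection/$\Sigma^1_1$-bounding core of Theorem~\ref{thm:main-theorem-extension} can be invoked verbatim; I expect this to be routine given the coding conventions for $\mathcal C$ as a represented space already fixed in the paper.
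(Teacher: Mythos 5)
Your overall direction is right---this is a domination-over-codes problem to be fed into the effectivization lemma---but the concrete reduction you describe diverges from the paper's and has genuine gaps.

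What the paper actually does is the following. It picks, for each code $e\in\om$, the partial $\Pi^1_1$-continuous function $\varphi_e$ it codes, and lets $I$ be the $\Pi^1_1$ set of $e$ for which $\varphi_e$ is $\Delta^1_1$-continuous with $\Delta^1_1$-domain; replacing the $\mathcal{C}$-valuation $\bar a$ by the corresponding $\om$-valuation $\bar d$ (with $a_j=\varphi_{d(j)}$), the original (non-deterministic) flowchart becomes a multi-valued $\tpbf{\Sigma}_{t(\bar d)}$-function $g\colon X\rightrightarrows\om$ returning the codes at the true leaves. Then it sets up a \emph{single} domination problem: define $\lhd\subseteq(X\times Y)\times\om$ by $(x,y)\lhd e$ iff $e\in I$, $x\in{\rm dom}(\varphi_e)$, and $\varphi_e(x)=y$, and shows that the graph map $({\rm id},f)\colon X\to X\times Y$ is $\Pi^1_1$-measurable w.r.t.\ $\lhd$ and is $\lhd$-dominated by $g$. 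One application of the \emph{multi-valued} Lemma \ref{thm:main-theorem-multi} (valid for arbitrary computable Polish $X$) produces a multi-valued $\Sigma_{t(\bar d)}(\Delta^1_1)$-function $g^\star$ that $\lhd$-dominates $({\rm id},f)$; re-labeling its flowchart's leaves from codes $e$ to $\varphi_e$ gives the witness for ${\Sigma}_{t(\bar x)}(\Delta^1_1)$-piecewise continuity. Your proposal neither identifies the auxiliary relation $\lhd$ nor the role of $({\rm id},f)$, which is the device that makes this a clean black-box invocation.

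Two steps in your plan would fail. First, you propose to invoke the \emph{single-valued} Extension Theorem \ref{thm:main-theorem-extension}, but the object you need to dominate is intrinsically multi-valued: two true paths for the same $x$ can carry different variable labels $x_j\neq x_{j'}$, with only $a_j(x)=a_{j'}(x)$ holding. The single-valued theorem is not applicable without a prior $\Pi^1_1$-uniformization, which is exactly the difficulty to be avoided; the paper sidesteps it by going directly to the multi-valued Lemma \ref{thm:main-theorem-multi}. Second, your preprocessing-plus-leaf-by-leaf plan hits a two-sided wall: Proposition \ref{prop:zero-dim-partition} (pairwise-disjoint $\ad$-branches) needs zero-dimensionality, so it only works over $\Baire$, and the passage back to $X$ via $\delta_X$ does not preserve disjointness, so you reintroduce non-determinism; moreover, running $\Delta^1_1$-selection independently at each leaf does not explain how totality of the assembled flowchart is maintained---that coordination is exactly what the $\Pi^1_1$-reflection step inside the proof of the main lemma provides, and it cannot be recovered by leaf-local choices.
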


\subsection{Auxiliary tools}
As a common setting for dealing with our main results, we introduce a few auxiliary notions:
For sets $X$, $Y$ and $Z$, and a binary relation $\lhd\subseteq Y\times Z$, we say that a partial function $f\pcolon X\to Y$ is {\em $\lhd$-dominated by} $g\colon X\to Z$ if $f(x)\lhd g(x)$ whenever $x\in{\rm dom}(f)$.
Let $X$ be a computable Polish space, and $Q$ be a totally represented set, i.e., $Q=\{q_z\}_{z\in\Baire}$.
For a pointclass ${\Gamma}$, we say that $f$ is {\em ${\Gamma}$-measurable w.r.t. $\lhd$} if there exists a ${\Gamma}$ set $R\subseteq X\times\Baire$ such that for any $x\in X$,
\[x\in{\rm dom}(f)\implies(f(x)\lhd q_z\iff (x,z)\in R).\]

\begin{example}\label{exa:equality-complexity}
For Polish spaces $X$ and $Y$, if $f\colon X\to Y$ is a partial $\tpbf{\Sigma}^1_1$-measurable function, then $f$ is $\tpbf{\Pi}^1_1$-measurable w.r.t.~the equality.
To see this, note that $f(x)=y$ if and only if $x\in f^{-1}\{y\}$, which is $\tpbf{\Pi}^1_1$ since any singleton is closed.

If $f\colon X\to Y$ is a partial $\tpbf{\Pi}^1_1$-measurable function, then $f$ is $\tpbf{\Delta}^1_1$-measurable w.r.t.~the equality.
To see this, let $(B_e)_{e\in\om}$ be an enumeration of rational open balls in $X$.
Then, whenever $x\in{\rm dom}(f)$, we have the following:
\begin{align*}
f(x)=y&\iff(\forall e\in\om)\;[y\in B_e\implies f(x)\in B_e],\\
f(x)\not=y&\iff(\exists d,e\in\om)\;[B_d\cap B_e=\emptyset\;\land\;y\in B_d\;\land\;f(x)\in B_e].
\end{align*}

It is easy to see that the both formulas are $\tpbf{\Pi}^1_1$ since $f$ is $\tpbf{\Pi}^1_1$-measurable and $B_e$ is open.
\end{example}

\begin{example}\label{exa:order-complexity}
Let $X$ and $Q$ be Polish spaces.
Moreover, let $f\pcolon X\to Q$ be a partial $\tpbf{\Pi}^1_1$-measurable function, and $\lhd$ be a $\tpbf{\Pi}^1_1$ binary relation on $Q$.
Then, $f$ is $\tpbf{\Pi}^1_1$-measurable w.r.t.~$\lhd$, since whenever $x\in{\rm dom}(f)$ we have
\[f(x)\lhd z\iff(\forall q\in Q)\;[f(x)=q\implies q\lhd z].\]
%
\end{example}

\begin{example}\label{ex:ordinal-valued-func}
Let ${\sf WO}$ be the set of all well-orders on $\N$, and define a binary relation $\lhd$ on ${\sf WO}$ as follows:
\[\alpha\lhd\beta\iff\mbox{the order type of $\alpha$ is less than or equal to that of $\beta$.}\]

We think of ${\sf WO}$ as a subspace of the Polish space $2^{\N\times\N}$.
By the usual ordinal comparison argument (see \cite[Theorem 4A.2]{Mos09}), if $f\pcolon X\to{\sf WO}$ is a partial $\tpbf{\Pi}^1_1$-measurable function, then $f$ is $\tpbf{\Pi}^1_1$-measurable w.r.t.~$\lhd$.
\end{example}

As a tool for proving our main result, we use the following topology, which is known to be useful in transforming boldface arguments into lightface ones.

\begin{definition}
For a computable Polish space $X$, the {\it Gandy-Harrington topology} $T_\infty$ on $X$ is the topology generated by $\Sigma^1_1$ subsets of $X$.
We use $T_\xi$ to denote the subtopology generated by sets in $\bigcup_{\eta<\xi}\tpbf{\Pi}^0_\eta \cap \Sigma^1_1$.
\end{definition}

For the basics of the Gandy-Harrington topology $T_\infty$ and its subtopology $T_\xi$, we refer the reader to Louveau \cite{Lou80}.

\begin{fact}[see {\cite[Proposition 6]{Lou80}}]\label{fact:GH-Baire}
The Gandy-Harrington topology $T_\infty$ is Baire in the topological sense:
Every nonempty $T_\infty$-open subset of $X$ is $T_\infty$-nonmeager.
\end{fact}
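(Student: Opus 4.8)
The plan is to derive the Baire property from the stronger assertion that $(X,T_\infty)$ is a \emph{strong Choquet space}, since strong Choquet spaces are classically known to be Baire. I would first make two reductions. The nonempty $\Sigma^1_1$ subsets of $X$ form a basis for $T_\infty$ that is closed under finite intersection, and every $T_\infty$-open set is a union of such basic sets, hence contains one; so it suffices to show that each nonempty $\Sigma^1_1$ set is $T_\infty$-nonmeager, and in the strong Choquet game we may assume that Player~I always plays (lightface) $\Sigma^1_1$ sets. Second, I would keep at hand the Gandy Basis Theorem and its relativization: every nonempty $\Sigma^1_1(p)$ set contains a point $x$ with $\om_1^{x\oplus p}=\om_1^{p}$ and $x\le_T\mathcal{O}^p$; in particular $\Omega=\{x\in X:\om_1^x=\om_1^{\rm CK}\}$ meets every nonempty lightface $\Sigma^1_1$ set.

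I would then describe a winning strategy for the nonempty player that, at each round, shrinks Player~I's $\Sigma^1_1$ set $A_n$ (which contains Player~I's point $x_n$) to a nonempty $\Sigma^1_1$ set $B_n$ with $x_n\in B_n\subseteq A_n$ which is simultaneously (i) contained in a basic clopen set of diameter $<2^{-n}$, so that the points $x_i$ converge to some $z\in\Baire$, and (ii) cut down by an effective condition ($\Sigma^1_1$ over a fixed parameter absorbing Player~I's first move) that stays nonempty by the relativized Gandy Basis Theorem and that forces the resulting limit $z$ to be hyperarithmetically controlled by that parameter. One must then verify that $z$ lies in every $A_n$, not merely in its $T_\infty$-closure, so that $z\in\bigcap_nA_n$ and Player~II wins; this verification is the real content of the argument.

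I expect precisely that verification to be the main obstacle: a decreasing sequence of nonempty $\Sigma^1_1$ sets — even one whose ambient Polish closures shrink to a single point — can have empty intersection, so the Baire property of $T_\infty$ is genuinely an effective phenomenon (it amounts to Gandy forcing adding a real lying in every condition of a sufficiently generic filter), and it is the hyperarithmetic structure of analytic sets ($\Sigma^1_1$-uniformization, the Gandy Basis Theorem, the effective perfect set theorem) that makes the required shrinking delicate but possible. An equivalent and perhaps cleaner route, which I might present instead, is to prove directly that $(\Omega,T_\infty\upto\Omega)$ is strong Choquet, hence Baire, note that $\Omega$ is $T_\infty$-dense in $X$ by the Gandy Basis Theorem, and then transfer: given dense $T_\infty$-open sets $D_n\subseteq X$, each $D_n\cap\Omega$ is dense open in $\Omega$ (using $T_\infty$-density of $D_n$ together with the Gandy Basis Theorem), so $\bigcap_n(D_n\cap\Omega)$ is dense in $\Omega$, and since $\Omega$ is $T_\infty$-dense in $X$ this forces $\bigcap_nD_n$ to be dense in $X$, which is the Baire property.
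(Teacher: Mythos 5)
The paper does not prove this statement; it is imported verbatim from Louveau \cite[Proposition 6]{Lou80}, so I am comparing your proposal against the standard argument. Your skeleton is the right one: reduce to nonempty lightface $\Sigma^1_1$ basic open sets, prove the strong Choquet property, and invoke the classical implication from strong Choquet to Baire. You also correctly locate the crux, namely forcing the limit point $z$ to lie in every set $A_n$ played by Player I rather than merely in some closure of it. But you stop exactly there (``this verification is the real content of the argument \dots\ I expect precisely that verification to be the main obstacle''), and the tools you reach for --- the Gandy basis theorem, the set $\Omega=\{x:\om_1^x=\om_1^{\rm CK}\}$, and ``hyperarithmetic control'' of $z$ --- do not close the gap: knowing that $\om_1^z=\om_1^{\rm CK}$ or that $z\le_T\mathcal{O}$ says nothing about whether $z\in A_n$. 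As written, the proof is missing its engine.

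The missing idea is the tree representation of $\Sigma^1_1$ sets combined with witness bookkeeping. Write each $A_i$ as $p[T_i]$ for a recursive tree $T_i$ on $\om\times\om$. At round $m$, since $x_m\in A_m\subseteq A_i$ for all $i\le m$, choose witnesses $y^m_i$ with $(x_m,y^m_i)\in[T_i]$, chosen so that $y^m_i\upto(m-1)=y^{m-1}_i\upto(m-1)$ (possible because $x_m$ lies in Player II's previous move), and let Player II play the set $B_m$ of all $x$ such that for every $i\le m$ there is $y$ with $(x,y)\in[T_i]$, $x\upto m=x_m\upto m$ and $y\upto m=y^m_i\upto m$. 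This $B_m$ is $\Sigma^1_1$, contains $x_m$, and is contained in $A_m$; the frozen initial segments force $x_m\to z$ and, for each fixed $i$, $y^m_i\to y^\infty_i$ with every finite initial segment of $(z,y^\infty_i)$ in $T_i$, so $(z,y^\infty_i)\in[T_i]$ and $z\in A_i$ for all $i$ (and $z\in\bigcap_mB_m$). This is entirely elementary: no Gandy basis theorem, no $\Sigma^1_1$-uniformization, no effective perfect set theorem. Your alternative route through $(\Omega,T_\infty\upto\Omega)$ only relocates the difficulty, since showing that subspace is strong Choquet requires the identical witness argument; the Gandy basis theorem is needed for finer facts (e.g.\ that $\Omega$ is Polish in the Gandy--Harrington topology), not for Baireness. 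Finally, the statement is for a general computable Polish space $X$, so either run the argument with a good parametrization of $\Sigma^1_1$ subsets of $X$ or transfer along a computable surjection from $\Baire$.
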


In the following, we write $\forall^\infty x P(x)$ to denote that $\{x:\lnot P(x)\}$ is $T_\infty$-meager.
In this case, we also say that {$P$ holds $\infty$-a.e.}
The following fact plays a key role in our proof.

\begin{fact}[Louveau {\cite[Lemma 8]{Lou80}}]\label{approximate}
Let $A$ be a $\tpbf{\Sigma}^0_\xi$ set in a computable Polish space $X$.
Then, there exists a $T_\xi$-open set $A^\prime$ such that $A=A^\prime$ $\infty$-a.e.
\end{fact}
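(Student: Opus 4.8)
The plan is to prove this by induction on $\xi$. The skeleton of the argument is elementary -- bookkeeping with the $\sigma$-ideal of $T_\infty$-meager sets, with Borel ranks, and with the fact that $T_\xi$ is countably based, its generators being the (countably many) lightface $\Sigma^1_1$ sets of a fixed Borel rank $<\xi$ -- but the real content is a density statement powered by the Baire property of the Gandy--Harrington topology (Fact \ref{fact:GH-Baire}) together with $\Sigma^1_1$-reflection, in the spirit of the forcing-relation / Vaught-transform arguments of Section \ref{sec:Vaught-transform} (Proposition \ref{thm:vaught-transform}, Fact \ref{dBCH}). For $\xi=1$ there is nothing to prove: in $\Baire$ the basic clopen sets are recursive, hence are $T_1$-generators, so every open set is already $T_1$-open (for a general computable Polish space one reads $T_\xi$ as also containing the ground topology).

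For the inductive step, decompose a $\tpbf{\Sigma}^0_\xi$ set as $A=\bigcup_n A_n$ with $A_n\in\tpbf{\Pi}^0_{\eta_n}$, $\eta_n<\xi$. Since $A\triangle\bigcup_nB_n\subseteq\bigcup_n(A_n\triangle B_n)$ and a countable union of $T_\infty$-meager sets is $T_\infty$-meager, it suffices to attach to a single $C\in\tpbf{\Pi}^0_\eta$, $\eta<\xi$, a $T_\xi$-open $B$ with $C=B$ $\infty$-a.e., and then take $A'=\bigcup_nB_n$. Apply the induction hypothesis to $X\setminus C\in\tpbf{\Sigma}^0_\eta$ to get a $T_\eta$-open $V$ with $C=X\setminus V$ $\infty$-a.e.; writing $V=\bigcup_kG_k$ with $G_k\in\tpbf{\Pi}^0_{<\eta}\cap\Sigma^1_1$ and complementing, $C$ agrees $\infty$-a.e.\ with $\bigcap_k(X\setminus G_k)$, a countable intersection of $\tpbf{\Sigma}^0_{<\eta}$ sets; one more application of the induction hypothesis at these lower ranks, together with $\bigcap_kW_k\triangle\bigcap_kW'_k\subseteq\bigcup_k(W_k\triangle W'_k)$ and distributivity, shows that, modulo a $T_\infty$-meager error, $C$ is a union of countable intersections of sets in $\tpbf{\Pi}^0_{<\xi}\cap\Sigma^1_1$. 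The one non-formal point is that these intersections are assembled relative to a Borel code $z$ of $A$ and so are a priori only $\Sigma^1_1(z)$, whereas a $T_\xi$-generator must be \emph{lightface} $\Sigma^1_1$; equivalently, one has to know that the lightface $T_\xi$-generators lie $T_\infty$-densely inside a set such as $C$.

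This density is the crux, and it is where Fact \ref{fact:GH-Baire} is used, together with $\Sigma^1_1$-reflection. One argues, by a forcing-relation argument on the Gandy--Harrington topology analogous to the proof of Proposition \ref{thm:vaught-transform}, that for $T_\infty$-generic $x\in C$ some lightface $\Sigma^1_1$ neighbourhood of $x$ has intersection with $C$ that is $T_\infty$-comeager in it, and then -- exploiting that nonempty lightface $\Sigma^1_1$ sets have hyperarithmetical members, and the hyperarithmetic bound on the forcing relation of Fact \ref{dBCH} -- that this neighbourhood can be shrunk to a genuine lightface $T_\xi$-generator that sits inside $C$ off a $T_\infty$-meager set. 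Collecting these generators produces a $T_\xi$-open $B$ with $C=B$ $\infty$-a.e., and reassembling over $n$ gives $A'$; carrying the codes along yields the effective form of the statement. The main obstacle is precisely this density / parameter-elimination step: setting up the forcing relation for the Gandy--Harrington topology, checking that it remains hyperarithmetical level-by-level through the Borel hierarchy, and organising the induction -- on $\xi$, and within a level on the structure of the generators -- so that the Borel ranks never reach $\xi$ while the whole construction stays uniform. The formal skeleton of the first two paragraphs is routine once that ingredient is in hand.
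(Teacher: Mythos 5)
The paper does not prove this statement at all --- it is imported as a black box from Louveau's 1980 paper (his Lemma on approximating $\tpbf{\Sigma}^0_\xi$ sets by $T_\xi$-open sets), so there is no in-paper argument to compare yours against. Judged on its own terms, your first two paragraphs are correct but consist only of the routine bookkeeping: reducing to a single $C\in\tpbf{\Pi}^0_\eta$ with $\eta<\xi$, double complementation, and distributivity do show that, modulo a $T_\infty$-meager error, $C$ is a union of countable intersections of sets in $\tpbf{\Pi}^0_{<\xi}\cap\Sigma^1_1$ --- but, as you yourself note, it is an \emph{uncountable} union (one piece per choice function produced by distributing $\bigcap_k\bigcup_j$) of intersections that are only $\Sigma^1_1$ relative to the Borel code of $A$ and those choice functions, hence are not generators of the lightface topology $T_\xi$.

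Everything therefore rests on the ``density / parameter-elimination'' statement of your last paragraph, and that paragraph describes a proof rather than giving one: you name the tools (Fact \ref{fact:GH-Baire}, reflection, a hyperarithmetic forcing relation) and then assert the conclusion. Two concrete problems. First, the Baire-property argument you gesture at only yields that $C$ agrees $\infty$-a.e.\ with a $T_\infty$-open set, i.e.\ with a union of arbitrary lightface $\Sigma^1_1$ sets; the entire difficulty of Louveau's lemma is the \emph{rank control} --- showing that a lightface $\Sigma^1_1$ set sitting inside a $\tpbf{\Pi}^0_\eta$ set modulo $T_\infty$-meager can be covered, modulo $T_\infty$-meager, by lightface $\Sigma^1_1\cap\tpbf{\Pi}^0_\eta$ sets. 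That sublemma must be proved by its own induction on $\eta$, interleaved with the approximation statement, using the $\Pi^1_1$-ness (in codes) of the relation ``$S\cap C$ is $T_\infty$-comeager in $S$'' together with reflection and boundedness; none of this is carried out, and it is essentially the whole content of the result. Second, Fact \ref{dBCH} concerns the Vaught transform along an admissible representation $\delta$ (ordinary Baire category on the Polish fibers $\|x\|$), not the Gandy--Harrington forcing relation, so it cannot be cited as supplying the hyperarithmetic bound you need here. In short: the skeleton is right and you have located the crux accurately, but the crux is missing.
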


Louveau used these facts to prove the so-called Louveau separation theorem:

\begin{fact}[Louveau {\cite[Theorem B]{Lou80}}]\label{fact:Louveau-sep}
If a disjoint pair of $\Sigma^1_1$ sets is separated by a $\tpbf{\Sigma}^0_\xi$ set, then it is separated by a $\Sigma^0_\xi(\Delta^1_1)$ set.
\end{fact}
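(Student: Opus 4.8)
The statement to prove is Louveau's separation theorem, so the plan is to reprove it from Facts~\ref{fact:GH-Baire} and~\ref{approximate}, following Louveau~\cite{Lou80}. \textbf{Reduction.} It suffices to produce, for every disjoint pair of $\Sigma^1_1$ sets $A,B$ admitting a separating $\tpbf{\Sigma}^0_\xi$ set $C$ (so $A\subseteq C$ and $B\cap C=\emptyset$), a $\Sigma^0_\xi(\Delta^1_1)$ set $D$ with $A\subseteq D$ and $B\cap D=\emptyset$. I would establish this by effective transfinite recursion on $\xi<\om_1^{\rm CK}$, invoking the recursion theorem so that a $\Delta^1_1$-code for $D$ is obtained uniformly from $\Sigma^1_1$-codes for $A,B$, a $\tpbf{\Sigma}^0_\xi$-code for $C$, and a notation for $\xi$; this uniformity is essential, since at level $\xi$ the construction calls the statement at all smaller levels at once.

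\textbf{The recursive step.} The clean case is when the separator comes with a \emph{lightface} decomposition $C=\bigcup_n C_n$, $C_n\in\Pi^0_{\xi_n}(\Delta^1_1)$, $\xi_n<\xi$, uniformly in $n$: then $A\cap C_n$ is a lightface $\Sigma^1_1$ set disjoint from the $\Sigma^1_1$ set $B$, and $X\setminus C_n$ is a $\tpbf{\Sigma}^0_{\xi_n}$ set separating $B$ from $A\cap C_n$, so the inductive hypothesis at level $\xi_n$ gives, uniformly, a $\Sigma^0_{\xi_n}(\Delta^1_1)$ set $E_n\supseteq B$ with $E_n\cap(A\cap C_n)=\emptyset$; one then checks that $D:=\bigcup_n(C_n\setminus E_n)$ is $\Sigma^0_\xi(\Delta^1_1)$, contains $A$, and misses $B$. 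The real work is to manufacture such a decomposition from the merely \emph{boldface} $C$, and this is where the Gandy--Harrington machinery enters: by Fact~\ref{approximate} there is a $T_\xi$-open $C'$ with $C=C'$ $\infty$-a.e., and $C'$ is a union $\bigcup_n G_n$ of lightface $\Sigma^1_1$ sets $G_n$ that are boldface $\tpbf{\Pi}^0_{<\xi}$, with uniformly obtainable codes. Since $G_n\cap B\subseteq C'\triangle C$ is $T_\infty$-meager while $G_n$ and $B$ are $T_\infty$-open, Fact~\ref{fact:GH-Baire} forces $G_n\cap B=\emptyset$, so each pair $(B,G_n)$ falls under the inductive hypothesis below $\xi$ and the clean-case argument applies with the $G_n$ in place of the $C_n$.

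\textbf{Main obstacle.} The delicate point — and the reason one passes through the Gandy--Harrington topology rather than arguing naively — is the gap between $A\subseteq C$ and $A\subseteq C'$: Fact~\ref{approximate} only gives $A\setminus C'$ $T_\infty$-meager, not empty, and $X\setminus C'$ need not be $T_\infty$-open, so one cannot simply reapply Fact~\ref{fact:GH-Baire}. I expect the hard part to be showing that the set $D$ built from the $G_n$ nonetheless covers all of $A$ (absorbing the $T_\infty$-meager error, or iterating the approximation), together with the bookkeeping needed to keep every stage of this uniformly $\Delta^1_1$ so that the recursion theorem applies; the base case $\xi=1$, where $C'$ is a plain open set and no lower-level call occurs, is precisely the instance carrying all of that meagerness bookkeeping.
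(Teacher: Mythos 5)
The paper imports this Fact from Louveau without proof, so the relevant comparison is with the standard argument, whose skeleton reappears in the paper's proof of the generalization, Lemma~\ref{thm:main-theorem}. Your ``clean case'' is correct, and so is the observation that $G_n\cap B$ is a $\Sigma^1_1$ (hence $T_\infty$-open) subset of the $T_\infty$-meager set $C'\setminus C$ and is therefore empty by Fact~\ref{fact:GH-Baire}. But the step you defer as ``the hard part'' --- getting $A\subseteq D$ when Fact~\ref{approximate} only makes $A\setminus C'$ $T_\infty$-meager --- is a genuine gap, and it cannot be closed along the route you set up. The set $D$ you build is a union over whatever enumeration $(G_n)$ one application of Fact~\ref{approximate} happens to hand you; its complement has no useful lightface definability, so $A\setminus D$ is not $T_\infty$-open and the Baire category theorem cannot be invoked to kill it, nor does ``iterating the approximation'' converge. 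Note also that Fact~\ref{approximate} as stated is a pure existence statement, so the ``uniformly obtainable codes'' for the $G_n$ are themselves an unproved assumption.

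The missing idea is to replace the ad hoc cover $(G_n)$ by the \emph{largest} candidate: define $U^+$ as the union of all basic $T_\xi$-open sets (i.e.\ lightface $\Sigma^1_1$ sets that are boldface $\tpbf{\Pi}^0_{<\xi}$) that the induction hypothesis succeeds in separating from $B$ at a lower level, and use those lower-level instances to rewrite membership in $U^+$ as a $\Pi^1_1$ condition on $\Delta^1_1$-codes --- this is exactly the role of Claim~\ref{sublem1} in the proof of Lemma~\ref{thm:main-theorem}. Then $A\setminus U^+$ is $\Sigma^1_1$, hence $T_\infty$-open; your meagerness computation shows $C'\subseteq U^+$ and hence that $A\setminus U^+$ is $T_\infty$-meager; so Fact~\ref{fact:GH-Baire} forces $A\subseteq U^+$ (compare Claim~\ref{sublemlim1}). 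Finally --- and this is the second tool absent from your sketch --- the resulting cover of $A$ is indexed by a $\Pi^1_1$ set of codes, not by $\om$, so one must apply $\Pi^1_1$-reflection (Fact~\ref{fact:reflection}) or $\Delta^1_1$-selection to extract a $\Delta^1_1$ subfamily still covering $A$ before the union becomes a genuine $\Sigma^0_\xi(\Delta^1_1)$ set. This reflection step is already unavoidable at $\xi=1$, where no meagerness issue arises at all, so your remark that the base case ``carries all of the meagerness bookkeeping'' misplaces the difficulty. The recursion theorem gives uniformity across levels, but it substitutes for neither of these two steps.
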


%


We also use a few basic facts in classical computability theory.
First, the following is known as Kreisel's $\Delta^1_1$-selection theorem; see e.g.~{\cite[4B.5]{Mos09}} or \cite[Theorem II.2.3]{Sac90}.

\begin{fact}[$\Delta^1_1$-selection]\label{fact:selection}
Let $X$ be a computable Polish space, and $E\subseteq X\times\om$ be a total $\Pi^1_1$ relation, i.e., for any $x\in X$ there exists $n\in\om$ such that $E(x,n)$ holds.
Then there exists a $\Delta^1_1$ function $f\colon X\to\om$ such that $(x,f(x))\in E$ holds for any $x\in X$.
\end{fact}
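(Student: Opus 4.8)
The plan is to give the classical argument via the prewellordering property of $\Pi^1_1$ together with the stage comparison theorem, organised so that the graph of the selector is manifestly $\Pi^1_1$ and single-valued. First I would invoke that $\Pi^1_1$ has the prewellordering property (see \cite[4B]{Mos09}) to fix a $\Pi^1_1$-norm $\varphi\colon E\to{\rm Ord}$ on the $\Pi^1_1$ set $E\subseteq X\times\om$; by the Stage Comparison Theorem \cite[4B.10]{Mos09}, the two associated comparison relations $\le^\ast_\varphi$ and $<^\ast_\varphi$ — where $(x,n)\le^\ast_\varphi(x',m)$ abbreviates ``$(x,n)\in E$, and if $(x',m)\in E$ then $\varphi(x,n)\le\varphi(x',m)$'', and similarly for $<^\ast_\varphi$ — are again $\Pi^1_1$. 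The whole point of these relations is that comparisons between the a priori inaccessible ordinal values of $\varphi$ become $\Pi^1_1$-expressible.

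Next, the idea is to pick, for each $x$, the $<$-least $n$ among those in $E_x:=\{n:(x,n)\in E\}$ minimising $\varphi(x,\cdot)$; informally this amounts to replacing $\varphi$ by the refined norm $\psi(x,n):=\om\cdot\varphi(x,n)+n$, which is injective on every section $E_x$. Concretely I would define $R\subseteq X\times\om$ by
\[R(x,n)\iff (x,n)\in E\ \land\ (\forall m)\Big[(x,n)<^\ast_\varphi(x,m)\ \lor\ \big((x,n)\le^\ast_\varphi(x,m)\land(x,m)\le^\ast_\varphi(x,n)\land n\le m\big)\Big].\]
Since $\Pi^1_1$ is closed under $\land$, $\lor$ and number quantification, $R$ is $\Pi^1_1$. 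A short check shows $R(x,n)\Rightarrow(x,n)\in E$, and that — using totality of $E$ (so $E_x\neq\emptyset$) and the well-ordering of the ordinals — for every $x$ there is exactly one $n$ with $R(x,n)$, namely the $<$-least $\varphi$-minimiser of $E_x$: the ``$m\notin E_x$'' instances cause no trouble because $(x,n)<^\ast_\varphi(x,m)$ holds automatically once $(x,m)\notin E$, and two distinct $R$-values for the same $x$ would force $n\le m$ and $m\le n$.

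Thus $R$ is the graph of a total function $f\colon X\to\om$ with $(x,f(x))\in E$ for all $x$. It remains to upgrade $R$ from $\Pi^1_1$ to $\Delta^1_1$, which I would do using that $R$ is total and single-valued: $(x,n)\notin R\iff(\exists m)(m\neq n\land R(x,m))$, and the right-hand side is again $\Pi^1_1$, so the complement of $R$ is $\Pi^1_1$, whence $R\in\Pi^1_1\cap\Sigma^1_1=\Delta^1_1$. Consequently $f$ is $\Delta^1_1$, which completes the proof.

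The only step requiring care rather than bookkeeping is checking that the displayed formula genuinely expresses ``$n$ is the $<$-least $\varphi$-minimiser of $E_x$'' — equivalently, that $\psi$ is a $\Pi^1_1$-norm — and this reduces entirely to the stated properties of $\le^\ast_\varphi$ and $<^\ast_\varphi$ and the closure properties of $\Pi^1_1$; once it is in place, uniqueness, totality, and the final $\Delta^1_1$-upgrade are routine. (If one prefers to avoid the prewellordering property, the same selector can be extracted from a $\Sigma^1_1$-boundedness argument on the ranks of the recursive trees witnessing $E$, but folding the tie-break into the norm keeps the estimate $R\in\Pi^1_1$ most transparent.)
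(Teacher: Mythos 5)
Your proof is correct: it is the standard derivation of Kreisel's $\Delta^1_1$-selection principle from the prewellordering property of $\Pi^1_1$ (easy uniformization via the stage-comparison relations, with the tie-break on $n$, followed by the observation that a total single-valued $\Pi^1_1$ graph is automatically $\Delta^1_1$), and all the steps check out. The paper states this as a Fact and gives no proof beyond citing \cite[4B.5]{Mos09} and \cite[Theorem II.2.3]{Sac90}, and your argument is precisely the one behind those references, so there is nothing further to compare.
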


A formula $\varphi(X)$ with a set variable is $\Pi^1_1$ on $\Pi^1_1$ if for each uniformly $\Pi^1_1$ sequence $(A_n)_{n\in\om}$, the set $\{n\in\om: \varphi(A_n)\}$ is $\Pi^1_1$.
The following is known as the first $\Pi^1_1$-reflection theorem (the lightface version of \cite[{Theorem 35.10}]{Kec95}):

\begin{fact}[$\Pi^1_1$-reflection]\label{fact:reflection}
Let $\varphi$ be $\Pi^1_1$ on $\Pi^1_1$, and $A\in \Pi^1_1$. If $\varphi(A)$, then
 there is some $B\in \Delta^1_1$ such that $B\subseteq A$ and $\varphi(B)$.
\end{fact}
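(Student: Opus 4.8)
The statement is the lightface \emph{First Reflection Theorem}, and the plan is to run the classical argument: pass to a $\Pi^1_1$-rank, reflect the property $\varphi$ onto that rank, and then descend by boundedness. We may assume $X=\Baire$.

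First I would fix the framework. Take the $\Pi^1_1$-complete set ${\sf WO}$ together with its canonical $\Pi^1_1$-rank $u\mapsto|u|$ and the two associated norm relations $\le^{\Sigma}$ (a $\Sigma^1_1$ relation) and $\le^{\Pi}$ (a $\Pi^1_1$ relation), which agree on all pairs $(v,u)$ with $u\in{\sf WO}$. Since $A$ is $\Pi^1_1$, fix a computable map $f\colon\Baire\to\Baire$ with $x\in A\iff f(x)\in{\sf WO}$, and for each $u\in\Baire$ put $A_u=\{x\in\Baire : f(x)\le^{\Pi} u\}$. Then $(A_u)_{u\in\Baire}$ is a uniformly $\Pi^1_1$ family; for $u\in{\sf WO}$ we have $A_u=\{x\in A : |f(x)|\le|u|\}\subseteq A$, the $A_u$ increase with $|u|$, and $\bigcup_{u\in{\sf WO}}A_u=A$; and when $u$ is a recursive (hence $\Delta^1_1$) well-order, $A_u$ is simultaneously $\Sigma^1_1$ (using $\le^{\Sigma}$) and $\Pi^1_1$ (using $\le^{\Pi}$), hence $A_u\in\Delta^1_1$, uniformly in a recursive index for $u$.

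Next I would reflect $\varphi$ onto the rank. Applying the hypothesis that $\varphi$ is $\Pi^1_1$ on $\Pi^1_1$ to the uniformly $\Pi^1_1$ family $(A_u)_{u\in\Baire}$ (using the $\Baire$-indexed form of the hypothesis, which is equivalent to the $\om$-indexed one), the set $E=\{u\in\Baire:\varphi(A_u)\}$ is $\Pi^1_1$; in particular $E\cap{\sf WO}$ is $\Pi^1_1$, and for $u\in{\sf WO}$ it records exactly that $\varphi$ holds of the $\Pi^1_1$ subset $A_u\subseteq A$. Consequently, it suffices to exhibit a recursive well-order $u\in E$: for such a $u$, $B:=A_u$ is $\Delta^1_1$, $B\subseteq A$ and $\varphi(B)$, as wanted.

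The heart of the matter — and the step I expect to be the main obstacle — is to show that $E$ does contain a recursive well-order, and this is where $\varphi(A)$ and the $\Pi^1_1$-on-$\Pi^1_1$-ness must be used together, since neither monotonicity of $\varphi$ nor any naive basis principle for nonempty $\Pi^1_1$ sets is available. The classical device is a transfinite-recursion-plus-boundedness argument: one uses the $\Pi^1_1$-on-$\Pi^1_1$-ness of $\varphi$ to repackage the assertion ``$\varphi(A)$'' as the well-foundedness of a recursive tree built from $f$ and the family $(A_u)$, and then invokes effective $\Sigma^1_1$-boundedness (Spector) — a $\Sigma^1_1$ subset of a $\Pi^1_1$-complete rank-bearing set has ranks bounded strictly below $\om_1^{\rm CK}$ — to descend to a recursive stage at which $\varphi$ already holds; an equivalent packaging traps the relevant $\Sigma^1_1$ witness set below $A$ by a $\Delta^1_1$ set via effective Lusin separation (the $\Delta^1_1$-separation of disjoint $\Sigma^1_1$ sets, cf.~\cite{Mos09}) and then extracts the code uniformly by $\Delta^1_1$-selection (Fact \ref{fact:selection}). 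Keeping the $\Sigma^1_1$ and $\Pi^1_1$ sides of the rank straight throughout this bookkeeping is the delicate point; the full details are exactly those of the lightface form of \cite[Theorem 35.10]{Kec95}, which one may also simply cite.
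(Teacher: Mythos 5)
First, a point of comparison: the paper does not prove this Fact; it is quoted as the lightface form of \cite[Theorem 35.10]{Kec95}, so there is no in-paper argument to measure yours against. Judged on its own, your setup (Steps 1 and 2) is correct and standard: the family $A_u=\{x:f(x)\le^{\Pi}u\}$ is uniformly $\Pi^1_1$, each $A_u$ with $u$ a recursive well-order is $\Delta^1_1$ and contained in $A$, the passage from the $\om$-indexed to the $\Baire$-indexed form of ``$\Pi^1_1$ on $\Pi^1_1$'' via good parametrizations is legitimate, and the reduction to exhibiting a recursive well-order in $E=\{u:\varphi(A_u)\}$ is valid.

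The genuine gap is Step 3, which you defer entirely to a citation; moreover the devices you gesture at (well-foundedness of a recursive tree, Lusin separation, $\Delta^1_1$-selection) are not the mechanism here --- Lusin separation is a special case of reflection, not a lemma in its proof. What is actually missing is control of $A_u$ for $u\notin{\sf WO}$. Choose the $\Pi^1_1$ side of the rank in the standard way (e.g.\ $v\le^{\Pi}u$ iff $v\in{\sf WO}$ and there is no order-preserving injection of $u$ into a proper initial segment of $v$); then $A_u=A$ for every $u\notin{\sf WO}$, since no ill-founded order embeds into a well-order. This is precisely where $\varphi(A)$ enters: it yields $\Baire\setminus{\sf WO}\subseteq E$, so $\Baire\setminus E$ is a $\Sigma^1_1$ subset of ${\sf WO}$, and effective $\Sigma^1_1$-boundedness bounds its ranks by some $\xi_0<\om_1^{\rm CK}$; any recursive well-order $u_0$ with $|u_0|\ge\xi_0$ then lies in $E$, and $B=A_{u_0}$ finishes the proof. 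Your hypotheses on $\le^{\Pi}$ (agreement with $\le^{\Sigma}$ only when the right argument is in ${\sf WO}$) do not supply this off-${\sf WO}$ behaviour, and without it the argument cannot close: $\varphi$ is not monotone, $E$ is merely $\Pi^1_1$, and no basis principle produces a recursive well-order in $E$ from $\varphi(A)$ alone.
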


\subsection{Proof of Main Theorems}

In this section we prove Theorems \ref{thm:Louveau-for-functions}, \ref{thm:main-theorem-extension}, \ref{thm:main-theorem-domination} and \ref{thm:decomposition} by induction on the complexity of $\lang(Q)$-terms.
However, in order for the induction to work, we have to prove the following stronger claim:

\begin{lemma}\label{thm:main-theorem}
Let $H$ be a $\Sigma^1_1$ subset of $\Baire$, $Q$ be a set, $t$ be a hyperarithmetical $\lang(Q)$-term, $\lhd\subseteq Y\times Q$ be a binary relation, and $f\pcolon H\to Y$ be a partial function which is $\Pi^1_1$-measurable w.r.t.~$\lhd$ on its $\Sigma^1_1$ domain.
Suppose that $f$ is $\infty$-a.e.~$\lhd$-dominated by some $\tpbf{\Sigma}_{t}$ function $g\colon H\to Q$, i.e.,
\[\{x\in X:x\in{\rm dom}(f)\;\land\;\neg (f(x)\lhd g(x))\}\mbox{ is $T_\infty$-meager.}\]
Then, $f$ is $\lhd$-dominated by some ${\Sigma}_{t}(\Delta^1_1)$ function $g^\star\colon H\to Q$.
\end{lemma}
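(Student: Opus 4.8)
The plan is to prove Lemma~\ref{thm:main-theorem} by induction on the syntax tree $\syn_t$ of the hyperarithmetical $\lang(Q)$-term $t$, using a flowchart $\bfS$ witnessing $g\in\tpbf{\Sigma}_t$ and transforming it into a $\Delta^1_1$ flowchart by replacing the sets attached to $\bfS$ with $\Delta^1_1$ approximations that agree with them $\infty$-a.e. By Lemma~\ref{lem:monotone} I may assume $\bfS$ is monotone, and by Theorem~\ref{prop:matryo-flow} (and Fact~\ref{fact:KM-main}-style normalization) I may assume $t$ is normal. For a node $\sigma$ labeled by $\mul$ the attached set $S_\sigma$ is $\tpbf{\Sigma}^0_{\rank(\sigma)}$, so by Fact~\ref{approximate} there is a $T_{\rank(\sigma)}$-open set agreeing with it $\infty$-a.e.; the point of working in the Gandy--Harrington topology and its subtopologies $T_\xi$ is exactly that these approximants, being open in a topology generated by $\Sigma^1_1\cap\tpbf{\Pi}^0_{<\xi}$ sets, can be chosen $\Delta^1_1$ by a reflection/selection argument, and on a $T_\infty$-comeager set nothing changes. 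The analogous step for $\ad$-labeled nodes handles a countable family $(S_{\sigma,n})_n$ uniformly.

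The key steps, in order, are as follows. First, fix the data: the $\Sigma^1_1$ set $H$, the partial function $f$ which is $\Pi^1_1$-measurable w.r.t.\ $\lhd$ on its $\Sigma^1_1$ domain $\mathrm{dom}(f)$, and a monotone flowchart $\bfS$ on $t$ over $\Baire$ determining $g$, with $f(x)\lhd g(x)$ holding $\infty$-a.e.\ on $\mathrm{dom}(f)$. Second, peel off the outermost symbol of $t$ and run the induction: in the constant case $t=q$, $g$ is constant $q$, so $f(x)\lhd q$ $\infty$-a.e.\ on $\mathrm{dom}(f)$, and since ``$f(x)\lhd q$'' is $\Pi^1_1$ in $x$ while its failure set is $T_\infty$-meager hence (by Fact~\ref{fact:GH-Baire}) contains no nonempty $\Sigma^1_1$ set, one gets $f(x)\lhd q$ for \emph{all} $x\in\mathrm{dom}(f)$, so $g^\star\equiv q$ works. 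In the case $t=s\mul u$, use that $\bfS$ assigns an open-in-$\Baire$ (really $\tpbf{\Sigma}^0_1$) set $V=S_{\langle\rangle}$ at the root; by Fact~\ref{approximate} replace $V$ by a $T_1$-open, then by reflection a $\Delta^1_1$-open, set $V^\star$ with $V=V^\star$ $\infty$-a.e.; apply the induction hypothesis on $V^\star$ with the subterm $u$ and on $\Baire\setminus V^\star$ with $s$, using that restricting to a $\Delta^1_1$ set keeps the hypotheses and that $V^\star$ absorbs the meager discrepancy, then glue the resulting $\Sigma_u(\Delta^1_1)$ and $\Sigma_s(\Delta^1_1)$ functions along $V^\star$ to get $g^\star\in\Sigma_t(\Delta^1_1)$. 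The case $t=\ad_i s_i$ is similar but needs the $\Delta^1_1$-selection theorem (Fact~\ref{fact:selection}) to choose, $\infty$-a.e., a single index $i$ covering each point — i.e.\ to replace the open cover by a $\Delta^1_1$ cover refining it — and then the induction hypothesis applies on each piece. The case $t=\phi_\alpha(s)$ is the most delicate: here $g=h\circ\beta$ for a $\tpbf{\Sigma}^0_{1+\om^\alpha}$-measurable $\beta$ and $h\in\tpbf{\Sigma}_s$; one must produce a $\Delta^1_1$-measurable $\beta^\star$ of the same Borel rank and apply the induction hypothesis with the pulled-back relation, using Fact~\ref{dBCH}/Fact~\ref{approximate} at level $1+\om^\alpha$ together with $\Pi^1_1$-reflection (Fact~\ref{fact:reflection}) to pin down the code; the flowchart picture (where $\phi_\alpha$-nodes carry $S_\sigma=X$ and only increment the rank) makes this bookkeeping manageable, since all one needs is that the $\tpbf{\Sigma}^0_{\rank(\sigma)}$ sets deeper in the tree admit $\Delta^1_1$ approximants.

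The main obstacle I expect is the simultaneous management of three moving parts in the inductive step: (i) keeping the meager ``error set'' where $f(x)\not\lhd g(x)$ under control as it gets pushed through the conditional branchings — one wants each approximation $S_\sigma\rightsquigarrow S_\sigma^\star$ to differ only on a $T_\infty$-meager set, and one must check that the union of countably many such errors down a branch is still meager and, crucially, still avoids the relevant $\Sigma^1_1$ pieces so that the domination passes from $\infty$-a.e.\ to everywhere on $\mathrm{dom}(f)$ at the leaves; (ii) ensuring that after restricting to a $\Delta^1_1$ subdomain $V^\star$ the restricted $f$ is still $\Pi^1_1$-measurable w.r.t.\ $\lhd$ with $\Sigma^1_1$ domain (routine, since $\mathrm{dom}(f)\cap V^\star$ is $\Sigma^1_1$) and that $g{\restriction}V^\star$ still dominates $\infty$-a.e.; and (iii) making every choice \emph{effective}, i.e.\ producing an actual $\Delta^1_1$ code of the flowchart $\bfS^\star$ uniformly from the given data, which is where Fact~\ref{fact:selection} and Fact~\ref{fact:reflection} do the real work of turning the boldface ``there exists an approximant'' into a lightface ``one can hyperarithmetically find an approximant.'' Once this is in place, Theorems~\ref{thm:Louveau-for-functions}, \ref{thm:main-theorem-extension}, \ref{thm:main-theorem-domination} and \ref{thm:decomposition} follow by specializing $\lhd$, $H$, $Y$ and $Q$ as indicated in the preceding discussion, together with Examples~\ref{exa:equality-complexity}, \ref{exa:order-complexity} and \ref{ex:ordinal-valued-func} to verify the $\Pi^1_1$-measurability-w.r.t.-$\lhd$ hypothesis in each case.
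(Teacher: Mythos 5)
Your overall toolkit is the right one (Gandy--Harrington topology, Fact \ref{approximate}, $\Pi^1_1$-reflection, $\Delta^1_1$-selection, induction along the syntax tree), and your constant case is exactly the paper's Case 1. But there are two genuine gaps. First, the inductive architecture: you treat $t=\phi_\alpha(s)$ as a separate case and propose to effectivize the composition $g=h\circ\beta$ by finding a $\Delta^1_1$-coded $\tpbf{\Sigma}^0_{1+\om^\alpha}$-measurable $\beta^\star$ and ``pulling back'' the relation $\lhd$. This does not work: there is no induction hypothesis that applies to the pulled-back data (the pullback of $f$ through $\beta$ is not a function with a $\Sigma^1_1$ domain that is $\Pi^1_1$-measurable w.r.t.\ anything useful), and the subterm $s$ on its own has the wrong Borel ranks. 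The paper avoids this case entirely: it writes $t=\phi_t(t')$ where $\phi_t$ is the maximal chain of Veblen symbols above the \emph{neck node}, and inducts on the body $t'$ while carrying the head $\phi_t$ along unchanged, so that the induction hypothesis is invoked for $\phi_t(t'')$ with $t''$ a proper subterm of $t'$. In the flowchart picture the Veblen nodes carry no data and only raise the rank $\xi=\rank(\sigma')$ of the set attached at the neck node; this is precisely what makes the Veblen case disappear. You gesture at this at the end of your $\phi_\alpha$ discussion but do not commit to it, and the case analysis you actually propose would not close.

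Second, and more seriously, the core approximation step in the $\mul$ case is not achievable as you describe it. You propose to ``replace $V$ by a $T_1$-open, then by reflection a $\Delta^1_1$-open, set $V^\star$ with $V=V^\star$ $\infty$-a.e.'' Fact \ref{approximate} gives a $T_\xi$-open approximant, which is an (in general uncountable) union of $\Sigma^1_1\cap\tpbf{\Pi}^0_{<\xi}$ sets, and there is no reflection argument that turns it into a $\Sigma^0_\xi(\Delta^1_1)$ set \emph{equal to $V$ $\infty$-a.e.}: the property ``$[Y]_\xi=V$ $\infty$-a.e.''\ is not preserved under shrinking $Y$, so Fact \ref{fact:reflection} does not apply to it. The paper's actual argument is structured differently: it defines $U^+$ as the union of all $\Sigma^1_1\cap\tpbf{\Sigma}^0_\xi$ sets $O$ on which \emph{some} $\tpbf{\Sigma}_{\phi_t(u)}$-function $\infty$-a.e.\ $\lhd$-dominates $f$, uses the induction hypothesis plus Louveau separation to show $U^+$ is indexed by a $\Pi^1_1$ set $P^+$ of $\Delta^1_1$-codes of $\tpbf{\Sigma}^0_\xi$ sets (Claim \ref{sublem1}), uses Fact \ref{approximate} only to prove the inclusion $U\subseteq U^+$ $\infty$-a.e.\ (Claim \ref{sublem2}), applies reflection to the statement ``$f$ restricted to the complement of $[Y]_\xi$ is dominated by a $\Sigma_{\phi_t(s)}(\Delta^1_1)$-function'' to extract a $\Delta^1_1$ set $R\subseteq P^+$, and then glues $\Delta^1_1$-flowcharts on the pieces of $[R]_\xi$ via $\Delta^1_1$-selection and the disjointification of Proposition \ref{prop:zero-dim-partition} (Claim \ref{sublem4}). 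The resulting set $[R]_\xi$ is in general neither equal to nor $\infty$-a.e.\ equal to $V$; what matters is that each of $[R]_\xi$ and its complement is handled by the corresponding subterm. Without this ``largest good open set plus reflection on the complement'' mechanism, the meager discrepancy cannot be absorbed, because after restricting to your putative $V^\star$ the induction hypothesis only converts $\infty$-a.e.\ domination to everywhere-domination on a $\Sigma^1_1$ piece on which the $u$-branch genuinely is a $\tpbf{\Sigma}_{\phi_t(u)}$-function, and you have not arranged that.
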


By effectivizing the arguments in Examples \ref{exa:equality-complexity} and \ref{exa:order-complexity} to calculate the lightface complexity, we observe that Lemma \ref{thm:main-theorem} implies Theorems \ref{thm:main-theorem-extension} and \ref{thm:main-theorem-domination}, which also deduces Theorem \ref{thm:Louveau-for-functions} for $X=\om^\om$.
To prove Theorem \ref{thm:Louveau-for-functions} for computable Polish $X$, first note that if $X$ is computable Polish (indeed, if $X$ is computable quasi-Polish; see \cite{BPS,HRSS}), then there exists a total computable open surjection $\delta\colon\Baire\to X$; see also Fact \ref{fact:total-representation}.
Then for a function $f\colon X\to Q$, $f\in\tpbf{\Sigma}_t(X)\cap\Delta^1_1$ implies $f\circ\delta\in\tpbf{\Sigma}_t(\Baire)\cap\Delta^1_1$; hence by Lemma \ref{thm:main-theorem}, we have $f\circ\delta\in\Sigma_t(\Delta^1_1;\Baire)$, and thus $f\in\Sigma_t(\Delta^1_1;X)$ by Selivanov's Theorem \ref{thm:Vaught-transform-main}.

To prove Theorem \ref{thm:decomposition}, we show the multi-valued version of Lemma \ref{thm:main-theorem} for any computable Polish space $X$.
Recall from Section \ref{sec:flowchart} that it is straightforward to define $\tpbf{\Sigma}_t$ and $\Sigma_t(\Delta^1_1)$ even for multi-valued functions by considering possibly non-deterministic flowcharts.
If $g$ is multi-valued, then we say that $f$ is $\lhd$-dominated by $g$ if, for any $x\in{\rm dom}(f)$ and any value $z$ of $g(x)$, we have $f(x)\lhd z$.

\begin{lemma}\label{thm:main-theorem-multi}
Let $H$ be a $\Sigma^1_1$ subset of a computable Polish space $X$, $Q$ be a set, $t$ be a hyperarithmetical $\lang(Q)$-term, $\lhd\subseteq Y\times Q$ be a binary relation, and $f\pcolon H\to Y$ be a partial function which is $\Pi^1_1$-measurable w.r.t.~$\lhd$ on its $\Sigma^1_1$ domain.
Suppose that $f$ is $\infty$-a.e.~$\lhd$-dominated by some multi-valued $\tpbf{\Sigma}_{t}$ function $g\colon H\rightrightarrows Q$, i.e.,
\[\{x\in X:x\in{\rm dom}(f)\;\land\;\exists z\in g(x)\;\neg (f(x)\lhd z)\}\mbox{ is $T_\infty$-meager.}\]
Then, $f$ is $\lhd$-dominated by some multi-valued ${\Sigma}_{t}(\Delta^1_1)$ function $g^\star\colon H\rightrightarrows Q$.
\end{lemma}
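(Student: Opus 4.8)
The plan is to prove Lemma \ref{thm:main-theorem-multi} (which contains Lemma \ref{thm:main-theorem} as the single-valued special case) by induction on the structure of the hyperarithmetical $\lang(Q)$-term $t$, following the skeleton of the flowchart-transformation arguments used for Theorem \ref{prop:matryo-flow} and Proposition \ref{thm:vaught-transform}. Concretely, a multi-valued $\tpbf{\Sigma}_t$ domination is witnessed by a (possibly non-deterministic) flowchart $\bfS = (S_\sigma)_{\sigma\in\syn_t}$ over $X$ together with the leaf-labels, so that every true path $\rho$ for $x$ produces a value $q_\rho$ with $f(x)\lhd q_\rho$ holding $\infty$-a.e. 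The goal is to replace each Borel set $S_\sigma$ attached to a branching node by a $\Delta^1_1$ set $S_\sigma^\star$ — chosen so that the resulting flowchart is still total (its domain still covers $H$) and still $\lhd$-dominates $f$ — so that the new flowchart is $\Sigma_t(\Delta^1_1)$.

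First I would set up the base case: if $t=q$ is a constant, then $g$ is the constant map $x\mapsto q$, and $f(x)\lhd q$ holds $\infty$-a.e.\ on ${\rm dom}(f)$; since ${\rm dom}(f)$ is $\Sigma^1_1$ and $f$ is $\Pi^1_1$-measurable w.r.t.\ $\lhd$, the set $\{x\in{\rm dom}(f) : \neg(f(x)\lhd q)\}$ is $\Sigma^1_1$; being $T_\infty$-meager and $\Sigma^1_1$, by Fact \ref{fact:GH-Baire} it must be empty, so the constant $\Delta^1_1$-function $g^\star = g$ already works. For the inductive step I would split on the top symbol of $t$. When $t = \ad_i s_i$, the flowchart gives $\bS_{{\rm rank}(\langle\rangle)}$-sets $(S_{\langle i\rangle})_{i}$ whose union covers $H$; by Fact \ref{approximate} each $S_{\langle i\rangle}$ agrees $\infty$-a.e.\ with a $T_\xi$-open set, and then using $\Pi^1_1$-reflection (Fact \ref{fact:reflection}) applied to the $\Pi^1_1$-on-$\Pi^1_1$ statement ``this $\Delta^1_1$ sequence of open sets covers the relevant $\Sigma^1_1$ piece of $H$'' — exactly as in Louveau's original argument — one extracts a $\Delta^1_1$ open cover; then I recurse into each $s_i$ on the $\Sigma^1_1$ set $H\cap S_{\langle i\rangle}^\star$, using the induction hypothesis, and glue. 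When $t = s\mul u$, I first approximate the single set $S_{\langle\rangle}$ by a $T_\xi$-open, hence (after reflection) $\Delta^1_1$, set $V^\star$, splitting $H$ into $H\cap V^\star$ (recurse on $u$) and $H\setminus V^\star$ (recurse on $s$); normality of $t$ keeps the Borel ranks lined up as in Lemma \ref{lem:monotone}. When $t = \phi_\alpha(s)$, the flowchart node is a ``pass-through'' ($S_{\langle\rangle}=X$), so there is nothing to replace at that node; I simply recurse on $s$, and the flowchart code for a $\phi_\alpha$-node is arbitrary anyway (see the coding conventions for flowcharts), so effectivity is automatic.

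The main obstacle, and where I would spend the most care, is the interaction of three things at once: (i) keeping the transformed flowchart \emph{total} — i.e.\ ensuring the chosen $\Delta^1_1$ sets still cover the $\Sigma^1_1$ pieces of $H$ after each replacement — which is where $\Pi^1_1$-reflection does the real work and where one must phrase ``covers'' as a $\Pi^1_1$-on-$\Pi^1_1$ property; (ii) keeping domination of $f$ valid, which requires propagating the $\infty$-a.e.\ hypothesis down the tree: after replacing $S_\sigma$ by $S_\sigma^\star$ (agreeing $\infty$-a.e.), the set of $x$ reaching a given node via the \emph{new} chart differs from the old one only on a $T_\infty$-meager set, so the induction hypothesis still applies on each branch with the $\infty$-a.e.\ clause intact — but this bookkeeping must be done carefully, tracking that at each node the ``bad set'' remains $\Sigma^1_1$ and $T_\infty$-meager hence empty at the leaves; and (iii) verifying that the whole construction is \emph{effective} in the codes, so that one genuinely lands in $\Sigma_t(\Delta^1_1;X)$ and not merely in $\tpbf{\Sigma}_t\cap$(hyperarithmetic pieces) — this follows because Fact \ref{approximate}, Fact \ref{fact:Louveau-sep}-style reflection, and $\Delta^1_1$-selection (Fact \ref{fact:selection}) are all effective and uniform, and the syntax tree of a hyperarithmetical term is $\Delta^1_1$, so a single effective recursion along $\syn_t$ assembles a $\Delta^1_1$ code of the new flowchart. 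For the multi-valued statement needed for Theorem \ref{thm:decomposition}, I would check that nothing above used single-valuedness of $g$: the domination hypothesis is stated for \emph{every} value of $g(x)$, every true path of the transformed chart corresponds (up to a meager set) to a true path of the original, and non-determinism is preserved verbatim — so the same induction goes through, and finally, combining with the open-surjection reduction $\delta\colon\Baire\to X$ and Selivanov's Theorem \ref{thm:Vaught-transform-main} as indicated in the text handles arbitrary computable (quasi-)Polish $X$.
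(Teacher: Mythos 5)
The critical gap is in your handling of the Veblen symbol $\phi_\alpha$. You treat $t=\phi_\alpha(s)$ as a ``pass-through'' and propose to ``simply recurse on $s$'', but the induction hypothesis for $s$ concerns the class $\tpbf{\Sigma}_s$, which is \emph{not} the class you need. A flowchart on $\phi_\alpha(s)$ assigns $\tpbf{\Sigma}^0_{1+\omega^\alpha}$ sets to the node below the root (since the Borel rank of that node is $1+\omega^\alpha$), whereas a flowchart on $s$ alone assigns $\tpbf{\Sigma}^0_1$ sets to its root. In particular $\tpbf{\Sigma}_{\phi_\alpha(s)}\supsetneq\tpbf{\Sigma}_s$ in general, so from the hypothesis that $f$ is $\infty$-a.e.\ $\lhd$-dominated by some $g\in\tpbf{\Sigma}_{\phi_\alpha(s)}$ you cannot conclude the corresponding hypothesis for $\tpbf{\Sigma}_s$, and even if you could, the induction hypothesis would only hand you back a $\Sigma_s(\Delta^1_1)$ function, not a $\Sigma_{\phi_\alpha(s)}(\Delta^1_1)$ one. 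The paper avoids this by a different induction: it writes $t=\phi_t(t')$ where $\phi_t$ is the maximal Veblen ``head'' and $t'$ the ``body'', inducts on the body only, and the induction hypothesis is always for the term $\phi_t(t'')$ with the \emph{same} head $\phi_t$ attached to a proper subterm $t''$ of $t'$. This is what keeps the Borel ranks lined up, because all the Louveau-separation and reflection machinery is applied at level $\xi=\mathrm{rank}(\mbox{neck node})=1+\omega^{\alpha_0}+\cdots+\omega^{\alpha_\ell}$, not at level $1$.

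A secondary, less clear-cut issue in your $\mul$/$\ad$ cases: you propose to ``approximate the single set $S_{\langle\rangle}$ by a $T_\xi$-open (hence after reflection $\Delta^1_1$) set $V^\star$'', but a $T_\xi$-open set does not directly come equipped with a $\Pi^1_1$ set of $\Delta^1_1$-codes to which $\Pi^1_1$-reflection can be applied, nor does reflecting the approximant preserve the $\infty$-a.e.\ domination on the piece you recurse into. The paper instead defines a canonical \emph{maximal} $T_\xi$-open set $U^+$ (the set of $x$ having a $\Sigma^1_1\cap\tpbf{\Sigma}^0_\xi$ neighborhood $O$ on which $f$ is $\infty$-a.e.\ dominated by a $\tpbf{\Sigma}_{\phi_t(u)}$-function); the $\Pi^1_1$ definability of the associated code-set $P^+$ (Claim~\ref{sublem1}) and the inclusion $S_{\sigma'}\subseteq U^+$ $\infty$-a.e.\ (via Fact~\ref{approximate}) are both consequences of this maximality, and both are needed before reflection can do its work. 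Your sketch runs Fact~\ref{approximate} and reflection in the wrong order and without the intermediate $U^+$. Finally, note that the paper's actual proof of Lemma~\ref{thm:main-theorem-multi} is just the observation that the proof of Lemma~\ref{thm:main-theorem} never used single-valuedness of $g$ except to conclude determinism of flowcharts over $\om^\om$; your plan to reprove the multi-valued version directly is fine in principle, provided the two gaps above are closed.
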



Assuming Lemma \ref{thm:main-theorem-multi}, let us give the detailed proof of Theorem \ref{thm:decomposition}.

\begin{proof}[Proof of Theorem \ref{thm:decomposition}]
Assume that $f$ is $\tpbf{\Sigma}_{t(\bar{x})}$-piecewise $\Delta^1_1$-continuous.
Then there exists a $\mathcal{C}(\Delta^1_1)$-valuation $\bar{h}=(h_j)_{j\in\Baire}$ such that $f$ is a $\tpbf{\Sigma}_{t(\bar{h})}$-function.
For each $e\in\om$, let $\varphi_e$ be the partial $\Pi^1_1$-continuous function from $X$ to $Y$ coded by $e$, and let $I$ be the $\Pi^1_1$ set of all codes $e$ such that $\varphi_e$ is indeed $\Delta^1_1$.
Then, there exists a function $d\colon\Baire\to I$ such that $h_j=\varphi_{d(j)}$ for any $j\in\Baire$.
Put $\bar{d}=(d_j)_{j\in\Baire}$ and then indeed, the definition of $f$ involves a (possibly non-deterministic) flowchart $[\bfS]$ on the $\lang(\om)$-term ${t(\bar{d})}$.
Clearly, $[\bfS]$ yields a multi-valued $\tpbf{\Sigma}_{t(\bar{d})}$-function $g$ such that, for any $x\in{\rm dom}(f)$ and any value $e$ of $g(x)$, we have $e\in I$ and $f(x)=\varphi_{e}(x)$.

Define $\lhd\subseteq(X\times Y)\times \om$ by $(x,y)\lhd e$ if and only if $e\in I$, $x\in{\rm dom}(\varphi_{e})$ and $\varphi_{e}(x)=y$.
Let us consider the function $({\rm id},f)\colon X\to X\times Y$ defined by $({\rm id},f)(x)=(x,f(x))$.
Since $f$ is $\Delta^1_1$-measurable, and each $\varphi_e$ is a $\Delta^1_1$-continuous function with a $\Delta^1_1$-domain, the conditions $e\in I$, $x\in {\rm dom}(\varphi_{e})$ and $f(x)=\varphi_{e}(x)$ are $\Pi^1_1$; hence, $({\rm id},f)$ is $\Pi^1_1$-measurable w.r.t.~$\lhd$.
For a single-valued function $h$, $({\rm id},f)$ is $\lhd$-dominated by $h$ if and only if $(x,f(x))\lhd h(x)$ for any $x\in{\rm dom}(f)$ if and only if $h(x)\in I$ and $f(x)=\varphi_{h(x)}(x)$ for any $x\in{\rm dom}(f)$.
Therefore, for a multi-valued function $h$, $({\rm id},f)$ is $\lhd$-dominated by $h$ if and only if, for any $x\in{\rm dom}(f)$ and $e\in h(x)$, we have $e\in I$ and $f(x)=\varphi_e(x)$.
Therefore, $({\rm id},f)$ is $\lhd$-dominated by $g$.
Now, by Lemma \ref{thm:main-theorem-multi}, $({\rm id},f)$ is $\lhd$-dominated by a multi-valued $\Sigma_{t(\bar{d})}(\Delta^1_1)$-function $g^\star$.
Then, we have $f(x)=\varphi_e(x)$ for any $e\in g^\star(x)$ as above.
Let $\bfS'$ be a flowchart determining $g^\star$, and then define $\bfS''$ by replacing the label $e\in\om$ of each leaf $\rho\in\syn_{t(\bar{d})}$ with $\varphi_e$.
It is easy to see that this flowchart $\bfS''$ witnesses that $f$ is ${\Sigma}_{t(\bar{x})}(\Delta^1_1)$-piecewise continuous.
\end{proof}

Before starting the proof of Lemma \ref{thm:main-theorem}, let us look at the syntax tree of an $\lang(Q)$-term $t$.
Then we can immediately see that the syntax tree $\syn_t$ has the uppermost node $\sigma'$ (i.e., the node closest to the root) which is not labeled by a Veblen function symbol.
Let us call $\sigma'$ the {\em neck node of $\syn_t$}.
In other words, the term $t$ can always be written as 
\[t=t'\quad\mbox{ or }\quad t=\phi_{\alpha_0}(\phi_{\alpha_{1}}(\dots(\phi_{\alpha_\ell}(t')))),\]
where the root of the syntax tree $\syn_{t'}$ of the subterm $t'$ is not labeled by a Veblen function symbol.
In this case, we write $\phi_{\alpha_0}(\phi_{\alpha_{1}}(\dots(\phi_{\alpha_\ell}(\cdot))))$ as $\phi_t(\cdot)$, and let us say that $\phi_t$ is the head of $t$, and $t'$ is the body of $t$.
Then, $t$ is of the form $\phi_t(t')$.
See also \cite[Lemma 3.5]{Sel19b}.
It is straightforward to see the following:

\begin{obs}\label{obs:Veblen-head-body}
Let $f\colon X\to Q$ be a function, and $\xi$ be the Borel rank of the neck node of an $\lang(Q)$-term $t$.
\begin{enumerate}
\item In the case where $t$ is of the form $\phi_t(q)$ for some $q\in Q$, $f\in\tpbf{\Sigma}_{t}$ if and only if $f\in\tpbf{\Sigma}_q$.
\item In the case where $t$ is of the form $\phi_t(s\mul u)$, $f\in\tpbf{\Sigma}_t$ if and only if there exists a $\tpbf{\Sigma}^0_\xi$ set $U\subseteq X$ such that $f\upto(X\setminus U)\in\tpbf{\Sigma}_{\phi_t(s)}$ and $f\upto U\in\tpbf{\Sigma}_{\phi_t(u)}$.
\item In the case where $t$ is of the form $\phi_t(\sqcup_{i\in\om}s_i)$, $f\in\tpbf{\Sigma}_{t}$ if and only if there exists a $\tpbf{\Sigma}^0_\xi$ cover $(U_n)_{n\in\om}$ of $X$ such that $f\upto U_n\in\tpbf{\Sigma}_{\phi_t(s_n)}$ for any $n\in\om$.
\end{enumerate}
\end{obs}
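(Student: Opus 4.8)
The plan is to argue directly with the fourth (flowchart) definition of $\tpbf{\Sigma}_t$, since the ordinal $\xi=\rank(\sigma')$ of the neck node $\sigma'$ is exactly the Borel rank attached to $\sigma'$ in every flowchart on $t$. Writing $t=\phi_t(t')$ with head $\phi_t=\phi_{\alpha_0}(\phi_{\alpha_1}(\cdots\phi_{\alpha_\ell}(\cdot)))$, the neck $\sigma'$ occupies the position $0^{\ell+1}$ (the length-$(\ell+1)$ string of zeros) in $\syn_t$, the nodes $\ep,0,\dots,0^{\ell}$ above it carry the Veblen labels $\phi_{\alpha_0},\dots,\phi_{\alpha_\ell}$, and $\rank(\sigma')=1+\om^{\alpha_0}+\dots+\om^{\alpha_\ell}=\xi$; below $\sigma'$ the tree $\syn_{t'}$ is attached, and its root $\sigma'$ is a leaf, a $\mul$-node, or an $\ad$-node according to Case (1), (2), or (3).

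First I would isolate the bookkeeping fact that makes everything work: for any subterm $v$ of $t'$, grafting $\syn_v$ below a child of $\sigma'$ inside $\syn_t$ assigns to the nodes of (the copy of) $\syn_v$ exactly the same Borel ranks as grafting $\syn_v$ directly below the Veblen chain inside $\syn_{\phi_t(v)}$. This is because, in both cases, the Veblen initial segments of a node $\rho$ of that copy consist of the head chain $\{\ep,0,\dots,0^{\ell}\}$ followed by the Veblen nodes of $\syn_v$ lying above $\rho$; the neck $\sigma'$ and the position of $v$'s root are labelled by $\mul$, $\ad$, or the root symbol of $v$ in an identical way on both sides, so they never contribute. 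Consequently a flowchart on $t$, restricted to the subtree hanging below the $i$-th child of $\sigma'$ and with the head chain reattached (each of its sets equal to the ambient space), is precisely a flowchart on $\phi_t(v)$; since restriction of a $\tpbf{\Sigma}^0_\gamma$ set to a subspace is relatively $\tpbf{\Sigma}^0_\gamma$, and since relatively $\tpbf{\Sigma}^0_\gamma$ sets are restrictions of $\tpbf{\Sigma}^0_\gamma$ sets, this correspondence goes through equally well over subspaces, and it preserves evaluation, determinism, and totality (the last via the characterisation that $\bfS$ is total iff the $\ad$-children cover the domain set of each $\ad$-node).

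With this in hand each of (1)--(3) is a short translation. For (1) the neck is a leaf labelled $q$, so the only flowchart on $t=\phi_t(q)$ runs through the head chain into that leaf and evaluates to the constant $q$, exactly like the only flowchart on $q$; hence $\tpbf{\Sigma}_t=\tpbf{\Sigma}_q=\{x\mapsto q\}$. For (2): if $f=\eval{\bfS}$ for a deterministic total flowchart $\bfS$ on $\phi_t(s\mul u)$, set $U=S_{\sigma'}$, which is $\tpbf{\Sigma}^0_\xi$ since $\sigma'$ is a $\mul$-node of rank $\xi$; every $x$ reaches $\sigma'$ and branches to $\sigma'\fr 1$ iff $x\in U$, so by the bookkeeping fact the restriction of $\bfS$ to the subtree below $\sigma'\fr 1$ (resp. $\sigma'\fr 0$), with its sets restricted to the subspace $U$ (resp. $X\setminus U$), witnesses $f\upto U\in\tpbf{\Sigma}_{\phi_t(u)}$ (resp. $f\upto(X\setminus U)\in\tpbf{\Sigma}_{\phi_t(s)}$). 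Conversely, from a $\tpbf{\Sigma}^0_\xi$ set $U$ and flowcharts witnessing $f\upto U\in\tpbf{\Sigma}_{\phi_t(u)}$ and $f\upto(X\setminus U)\in\tpbf{\Sigma}_{\phi_t(s)}$, I assemble the flowchart on $t$ whose head-chain sets are $X$, whose neck set is $U$, and whose two subtrees carry extensions to $X$, of the same Borel rank, of the sets of the given pieces; it is deterministic and total because the two pieces run on the complementary sets $U$ and $X\setminus U$, which tile $X$. Case (3) is the same with $\sigma'$ labelled $\ad$: a deterministic total flowchart on $\phi_t(\ad_i s_i)$ has $(S_{\sigma',n})_n$ a $\tpbf{\Sigma}^0_\xi$ cover of $X$ by totality, and its restriction below $\sigma'\fr n$ witnesses $f\upto S_{\sigma',n}\in\tpbf{\Sigma}_{\phi_t(s_n)}$; conversely a $\tpbf{\Sigma}^0_\xi$ cover $(U_n)_n$ together with flowcharts for the pieces $f\upto U_n$ glue along the $\ad$-node into a flowchart on $t$, total since $(U_n)_n$ covers $X$ and deterministic since each piece is and they all agree with $f$ on overlaps.

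I expect the only delicate point to be the rank-bookkeeping fact of the second paragraph — checking that inserting the Veblen head shifts the Borel ranks of the body's nodes by the same amount whether one computes inside $\syn_t$ or inside $\syn_{\phi_t(v)}$, and that this shift does not inflate the Borel class when passing between the ambient space and the subspaces $U$, $X\setminus U$, $U_n$. This is a direct computation from the definition of $\rank(\sigma)$ via Veblen initial segments together with the elementary stability of $\tpbf{\Sigma}^0_\gamma$ under restriction to, and extension from, subspaces; the rest is routine passage between a flowchart and the subtrees hanging below its neck.
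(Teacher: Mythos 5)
Your proposal is correct. The paper offers no proof of this observation (it is declared ``straightforward to see''), and your argument --- unfolding the flowchart definition at the neck node, checking that the Borel ranks of the body's nodes are computed identically in $\syn_t$ and in $\syn_{\phi_t(v)}$ since the non-Veblen node $\sigma'$ contributes nothing to $\rank$, and passing between ambient and relative $\tpbf{\Sigma}^0_\gamma$ sets --- is exactly the intended route. The one genuinely delicate point, that the restriction of the flowchart below a child of $\sigma'$ remains \emph{total} over the corresponding piece of $X$, you handle correctly by invoking the paper's own characterisation of totality via the covering condition at $\ad$-nodes, which is also what makes the forward direction of case (3) work.
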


Now, let us start the proof of Main Lemma \ref{thm:main-theorem}.

\begin{proof}[Proof of Lemma \ref{thm:main-theorem}]
We prove the assertion by induction on the complexity of the body of $t=\phi_t(t')$, where $t'$ is the body of $t$.
Inductively assume that we have already shown the assertion for the term $\phi_t(t'')$ for any proper subterm $t''$ of $t'$.
Put $X=\Baire$.

\medskip

{\bf Case 1.}
The neck node of $\syn_t$ is labeled by a constant symbol $q$, i.e., $t$ is of the form $\phi_t(q)$.
Then, $\tpbf{\Sigma}_t=\tpbf{\Sigma}_q$.
By our assumption, $f$ is $\infty$-a.e.~$\lhd$-dominated by a $\tpbf{\Sigma}_t$ function.
However, such a function must be the constant function $x\mapsto q$.
Therefore, $\{x\in{\rm dom}(f):\neg (f(x)\lhd q)\}$ is $T_\infty$-meager.
Since ${\rm dom}(f)$ is $\Sigma^1_1$ and $f$ is $\Pi^1_1$-measurable w.r.t.~$\lhd$, this set is $\Sigma^1_1$, and in particular, $T_\infty$-open.
Therefore, it must be empty by the Baire category theorem for $T_\infty$ (Fact \ref{fact:GH-Baire}).
Hence, we have $f(x)\lhd q$ for any $x\in{\rm dom}(f)$.
This means that $f$ is $\lhd$-dominated by the constant function $g^\star\colon x\mapsto q$ on any $H\subseteq X$.
Clearly, $g^\star\in\Sigma_t(\Delta^1_1)$ in $H$.

\medskip

{\bf Case 2.}
The neck node of $\syn_t$ is labeled by $\mul$, i.e., $t$ is of the form $\phi_t(s\mul u)$.
By our assumption, $f$ is $\infty$-a.e.~$\lhd$-dominated by a total $\tpbf{\Sigma}_t$ function $g\colon H\to Z$.
Let $\bfS=(S_\sigma)_{\sigma\in\syn_t}$ be a flowchart on the term $t$ which determines $g$, i.e., $\eval{\bfS}=g$.
Let $\xi$ be the Borel rank of the neck node of $\syn_t$.
Note that $\xi<\om_1^{\rm CK}$ since $t$ is $\Delta^1_1$.
Then, a $\tpbf{\Sigma}^0_{\xi}$ set $U\subseteq X$ is assigned to the neck node $\sigma'$, i.e., $S_{\sigma'}=U$.
By Observation \ref{obs:Veblen-head-body}, note that we have $\eval{\bfS}\upto (H\setminus U)\in\tpbf{\Sigma}_{\phi_t(s)}$ and $\eval{\bfS}\upto (H\cap U)\in\tpbf{\Sigma}_{\phi_t(u)}$.
Then, define $U^+$ as the following $T_\xi$-open set:
\begin{align*}
x\in U^+\iff&(\exists O\in\Sigma^1_1\cap\tpbf{\Sigma}^0_{\xi})(\exists g\in\tpbf{\Sigma}_{\phi_t(u)}(H\cap O))\\ 
&\qquad[x\in O\;\land\;(\forall^\infty y)\;(y\in {\rm dom}(f)\cap O\;\longrightarrow\;f(y)\lhd g(y))].
\end{align*}

It is not straightforward, but one can show that $U^+\subseteq X$ is the largest $T_{\xi}$-open set such that some $\tpbf{\Sigma}_{\phi_t(u)}$-function on $U^+\cap H$ $\infty$-a.e.~$\lhd$-dominates $f$ in the same way as in Claim \ref{sublem4} below.
First we show the following equivalences:
\begin{align*}
x\in U^+\iff&(\exists O\in\Sigma^1_1\cap\tpbf{\Sigma}^0_{\xi})(\exists g^\star\in{\Sigma}_{\phi_t(u)}(\Delta^1_1;H\cap O))\\
&\qquad[x\in O\;\land\;(\forall y)\;(y\in {\rm dom}(f)\cap O\;\longrightarrow\;f(y)\lhd g^\star(y))]\\
\iff&(\exists O^\star\in{\Sigma}^0_{\xi}(\Delta^1_1))(\exists g^\star\in{\Sigma}_{\phi_t(u)}(\Delta^1_1;H^\star\cap O))\\
&\qquad[x\in O\;\land\;(\forall y)\;(y\in {\rm dom}(f)\cap O^\star\;\longrightarrow\;f(y)\lhd g^\star(y))]
\end{align*}

The first equivalence follows from the induction hypothesis restricted to the $\Sigma^1_1$-domain $H\cap O$.
For the second equivalence, observe that a set $O$ in the first equivalent formula is always disjoint from $L=\{y\in{\rm dom}(f):\neg (f(y)\lhd g^\star(y))\}$.
By Lemma \ref{lem:complexity-flowchart}, $g^\star$ has a $\Delta^1_1$-measurable realizer $G\colon H\to\Baire$, i.e., $g^\star(y)=q_{G(y)}$.
As for the complexity of $L$, note that
\[y\in L\iff y\in{\rm dom}(f)\;\land\;(\exists z\in\Baire)\;[G(y)=z\;\land\;\neg(f(y)\lhd q_z)].\]

Therefore, the set $L$ is $\Sigma^1_1$, since ${\rm dom}(f)$ is $\Sigma^1_1$ and $f$ is $\Pi^1_1$-measurable w.r.t.~$\lhd$.
Hence, given a set $O$ in the first equivalent formula, since $O$ is $\Sigma^1_1$ and $\tpbf{\Sigma}^0_{\xi}$, by the Louveau separation theorem (Fact \ref{fact:Louveau-sep}), there exists a $\Sigma^0_{\xi}(\Delta^1_1)$ set $O^\star$ separating $O$ from $L$.
This verifies the third equivalence.
Next, we check that the second equivalent formula is $\Pi^1_1$.
Indeed:
\begin{claim}\label{sublem1}
$U^+$ can be obtained from a $\Pi^1_1$-sequence of $\Delta^1_1$-indices of $\tpbf{\Sigma}^0_{\xi}$ sets.
\end{claim}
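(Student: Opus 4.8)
The plan is to unwind the second equivalent formula defining $U^+$ and check that each quantifier block is harmless for $\Pi^1_1$-complexity. Recall that the formula reads: $x\in U^+$ iff there exist a ${\Sigma}^0_\xi(\Delta^1_1)$ set $O^\star$ and a ${\Sigma}_{\phi_t(u)}(\Delta^1_1;H\cap O^\star)$-function $g^\star$ such that $x\in O^\star$ and $(\forall y)(y\in{\rm dom}(f)\cap O^\star\to f(y)\lhd g^\star(y))$. First I would observe that a $\Sigma^0_\xi(\Delta^1_1)$ set is coded by a $\Delta^1_1$ element of $\Baire$, and likewise, by the coding of flowcharts discussed after Definition of flowcharts (and Lemma \ref{lem:complexity-flowchart}), a ${\Sigma}_{\phi_t(u)}(\Delta^1_1)$-function is witnessed by a $\Delta^1_1$ code of a deterministic total flowchart on $\phi_t(u)$; the sets of such codes are $\Pi^1_1$ by Lemma \ref{lem:complexity-flowchart}(2),(3). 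So ``there exists a $\Delta^1_1$ code'' ranging with an additional $\Pi^1_1$ side condition on that code is exactly a $\Pi^1_1$-on-$\Pi^1_1$ situation, to which the $\Pi^1_1$-reflection theorem (Fact \ref{fact:reflection}) applies.

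The key steps, in order, would be: (i) rewrite membership in $U^+$ as $(\exists c)[\,c$ is a $\Delta^1_1$ code of a pair $(O^\star,g^\star)$ of the required form, $x\in O^\star_c$, and $\psi(c)$], where $\psi(c)$ abbreviates the universal statement $(\forall y)(y\in{\rm dom}(f)\cap O^\star_c\to f(y)\lhd g^\star_c(y))$. (ii) Check that $\psi$ is $\Pi^1_1$ on $\Pi^1_1$: for a uniformly $\Pi^1_1$ sequence of candidate codes, the relevant sets $O^\star_c$, the realizers $G_c$ of $g^\star_c$ (via Lemma \ref{lem:complexity-flowchart}(1)), and the conditions ``$c$ codes a total deterministic flowchart on $\phi_t(u)$'' are all uniformly $\Pi^1_1$, and ${\rm dom}(f)$ is $\Sigma^1_1$ with $f$ being $\Pi^1_1$-measurable w.r.t.\ $\lhd$, so $\psi(c)$ unpacks to a $\Pi^1_1$ matrix. (iii) Apply $\Pi^1_1$-reflection to conclude that if such a (lightface $\Pi^1_1$) set of good codes is nonempty then it contains a $\Delta^1_1$ member — but here I actually want the reverse engineering: the point is that the ``$\exists c$ ($c$ is $\Delta^1_1$)'' quantifier, which a priori reads $\exists c\,(c\in\Delta^1_1\land\dots)$ and hence is not obviously $\Pi^1_1$, can be replaced by an honest number quantifier over $\Delta^1_1$-indices, since the predicate ``$e$ is a $\Delta^1_1$-index'' is $\Pi^1_1$ and, relativized appropriately, the whole matrix stays $\Pi^1_1$; alternatively, reflection lets us replace an unbounded $\Sigma^1_1$-witness search by a $\Delta^1_1$ one without increasing complexity. (iv) Conclude that $U^+$ arises as $\bigcup\{O^\star_{e}: e\in P\}$ where $P$ is the resulting $\Pi^1_1$ set of $\Delta^1_1$-indices of $\Sigma^0_\xi$ sets, which is precisely the statement of the claim.

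The main obstacle I expect is the careful bookkeeping in step (iii): the phrase ``$U^+$ can be obtained from a $\Pi^1_1$-sequence of $\Delta^1_1$-indices of $\tpbf{\Sigma}^0_\xi$ sets'' must be parsed correctly — we are not merely asserting $U^+$ is $\Pi^1_1$ (it is $T_\xi$-open, hence $\tpbf{\Sigma}^0_\xi$, but its \emph{lightface} complexity and, more importantly, a lightface \emph{uniform} description is what is needed for the induction to effectivize). So the real content is producing, effectively from the given data, a $\Pi^1_1$ set $P\subseteq\om$ together with a $\Delta^1_1$ map assigning to each $e\in P$ a $\Delta^1_1$-code of a $\tpbf{\Sigma}^0_\xi$ set $O_e$ with $U^+=\bigcup_{e\in P}O_e$. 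The subtlety is ensuring the assignment $e\mapsto$ (code of $O_e$, realizer $G_e$ of $g^\star_e$) is genuinely $\Delta^1_1$ on the $\Pi^1_1$ set $P$, which follows from the uniform versions of Lemma \ref{lem:complexity-flowchart} and the uniformity built into the Louveau separation theorem (Fact \ref{fact:Louveau-sep}), but requires one to track that all these uniformities compose. Once this is in place, the inclusion $\subseteq$ is the third equivalence already established, and $\supseteq$ is immediate since each $O_e$ is among the sets $O^\star$ witnessing membership in $U^+$.
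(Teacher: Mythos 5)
Your proposal is correct and follows essentially the same route as the paper: the content of the claim is exactly that the defining condition on a $\Delta^1_1$-index of $O^\star$ is $\Pi^1_1$, which you establish as the paper does, by using Lemma \ref{lem:complexity-flowchart} to make the set of admissible flowchart codes $C$ and the realizers $G_c$ uniformly $\Pi^1_1$ (resp.\ $\Delta^1_1$) and then absorbing the quantifier $\exists c\in\Delta^1_1$ via the hyperarithmetical quantification argument, i.e.\ replacing it by a number quantifier over $\Delta^1_1$-indices. One small correction: the ``alternatively, reflection lets us replace an unbounded $\Sigma^1_1$-witness search by a $\Delta^1_1$ one'' remark in your step (iii) is not the right mechanism here --- $\Pi^1_1$-reflection does not turn the existential over $\Delta^1_1$ witnesses into a $\Pi^1_1$ matrix; in the paper it enters only afterwards, in passing from the $\Pi^1_1$ index set $P^+$ to a $\Delta^1_1$ subset $R$ (Claims \ref{sublem2}--\ref{sublem4}). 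Your primary route (quantifying over $\Delta^1_1$-indices, with the side condition that $e$ is a $\Delta^1_1$-index being $\Pi^1_1$) is exactly the argument the paper invokes via \cite[Lemma III.3.1]{Sac90}, and your parsing of the claim as producing a $\Pi^1_1$ set of indices $P^+$ with $U^+=[P^+]_\xi$, rather than a mere complexity bound on $U^+$, is the correct reading.
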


\begin{proof}
To verify the claim, let $C$ be the set of codes of deterministic flowcharts $\bfS$ such that the domain of $\eval{\bfS}$ includes $H\cap O^\star$, and for $c\in C$, let $G_c\pcolon X \to\Baire$ be the realizer for the flowchart $\bfS_c$ coded by $c$, i.e., $\eval{\bfS_c}(x)=q_{G_c(x)}$ for any $x$ in the domain of $\eval{\bfS_c}$.
Then, the condition for $O^\star$ in the second equivalent formula for $x\in U^+$ can be rewritten as follows:
\[(\exists c\in\Delta^1_1)\;\left[c\in C\;\land\;(\forall y,z)\left(\left(y\in {\rm dom}(f)\cap O^\star\;\land\;G_c(y)=z\right)\;\longrightarrow\;f(y)\lhd q_z\right)\right]\]

To see that the formula inside the square brackets is $\Pi^1_1$, recall from Lemma \ref{lem:complexity-flowchart} that the set $C$ is $\Pi^1_1$ since $O^\star\cap H$ is $\Sigma^1_1$, and $G_c$ is $\Delta^1_1$-measurable relative to $c$.
Thus, since ${\rm dom}(f)$ is $\Sigma^1_1$ and $f$ is $\Pi^1_1$-measurable w.r.t.~$\lhd$, the inner formula is $\Pi^1_1$.
Hence, by the usual hyperarithmetical quantification argument (see \cite[Lemma III.3.1]{Sac90}), the whole formula is also $\Pi^1_1$.
This verifies the claim.
\end{proof}

Hereafter, we use $H'$ to denote the domain of $f$.

\begin{claim}\label{sublem2}
The function $f\upto H'\setminus U^+$ is $\lhd$-dominated by a ${\Sigma}_{\phi_t(s)}(\Delta^1_1)$-function on $H\setminus U^+$.
\end{claim}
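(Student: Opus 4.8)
The plan is to deduce Claim~\ref{sublem2} from the induction hypothesis (Lemma~\ref{thm:main-theorem}) applied to the strictly simpler term $\phi_t(s)$, which is legitimate since $s$ is a proper subterm of the body $t'=s\mul u$ of $t$. Concretely, I would invoke Lemma~\ref{thm:main-theorem} for the partial function $f\upto(H'\setminus U^+)$, with the ambient $\Sigma^1_1$ set taken to be $H\setminus U^+$ and with the same relation $\lhd$; its conclusion is exactly a $\Sigma_{\phi_t(s)}(\Delta^1_1)$-function on $H\setminus U^+$ that $\lhd$-dominates $f\upto(H'\setminus U^+)$, which is the claim. To apply the lemma I must check: (i) $H\setminus U^+$, and hence $H'\setminus U^+$, is $\Sigma^1_1$; (ii) $f\upto(H'\setminus U^+)$ is $\Pi^1_1$-measurable w.r.t.\ $\lhd$ on its domain; (iii) $f\upto(H'\setminus U^+)$ is $\infty$-a.e.\ $\lhd$-dominated by some total $\tpbf{\Sigma}_{\phi_t(s)}$-function defined on all of $H\setminus U^+$.

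Items (i) and (ii) are bookkeeping. By Claim~\ref{sublem1}, $U^+$ is a union of a family $(A_n)_{n\in I}$ of $\tpbf{\Sigma}^0_\xi$ sets carrying uniformly $\Delta^1_1$ codes and indexed by a $\Pi^1_1$ set $I\subseteq\om$; since $\xi<\om_1^{\rm CK}$, the relation ``$n\in I$ and $x\in A_n$'' is $\Pi^1_1$ in $(x,n)$, and prefixing the number quantifier $\exists n$ keeps it $\Pi^1_1$ (as $\Pi^1_1$ is closed under quantification over $\om$). Hence $U^+$ is $\Pi^1_1$, so $H\setminus U^+$ and $H'\setminus U^+$ are $\Sigma^1_1$. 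For (ii), the witness to $\Pi^1_1$-measurability of $f$ w.r.t.\ $\lhd$ restricts to a witness for $f\upto(H'\setminus U^+)$ once intersected with the new domain.

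Item (iii) is the substance, and the hard part will be producing the dominating $\tpbf{\Sigma}_{\phi_t(s)}$-function. Fix a flowchart $\bfS=(S_\sigma)_{\sigma\in\syn_t}$ determining $g$, let $\sigma'$ be the neck node, and recall $S_{\sigma'}=U$ is the $\tpbf{\Sigma}^0_\xi$ set for which Observation~\ref{obs:Veblen-head-body}(2) already gives $g\upto(H\setminus U)\in\tpbf{\Sigma}_{\phi_t(s)}$ and $g\upto(H\cap U)\in\tpbf{\Sigma}_{\phi_t(u)}$. By Fact~\ref{approximate}, fix a $T_\xi$-open set $\tilde U$ with $U=\tilde U$ $\infty$-a.e. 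The key point is that $\tilde U\subseteq U^+$: any point of $\tilde U$ lies in a basic $T_\xi$-open set $O\subseteq\tilde U$, which belongs to $\Sigma^1_1\cap\tpbf{\Sigma}^0_\xi$; restricting the $\tpbf{\Sigma}_{\phi_t(u)}$-function $g\upto(H\cap U)$ to the subspace $H\cap O$ gives a $\tpbf{\Sigma}_{\phi_t(u)}$-function that $\infty$-a.e.\ $\lhd$-dominates $f$ on $H'\cap O$ (using that $O$ is $\infty$-a.e.\ contained in $U$ and $f$ is $\infty$-a.e.\ $\lhd$-dominated by $g$), so $O$ is among the sets witnessing membership in $U^+$, exactly as in Claim~\ref{sublem4}. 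Consequently $U\setminus U^+\subseteq U\setminus\tilde U$ is $T_\infty$-meager, so $H\setminus U^+\subseteq(H\setminus U)\cup N$ with $N:=(H\setminus U^+)\cap U$ both $T_\infty$-meager and $\tpbf{\Sigma}^0_\xi$ relative to the subspace $H\setminus U^+$. Restricting the genuine $\tpbf{\Sigma}_{\phi_t(s)}$-function $g\upto(H\setminus U)$ to the subspace $H\setminus(U\cup U^+)$ and then re-attaching $N$ by routing its points down a fixed branch of $\syn_{\phi_t(s)}$ to a leaf of $s$ (in the degenerate case $s=q$, where $\syn_s$ is a single leaf, one falls back to the Baire-category argument of Case~1) yields a deterministic total flowchart on $\phi_t(s)$ over $H\setminus U^+$; its evaluation agrees with $g$ off a $T_\infty$-meager set, hence $\infty$-a.e.\ $\lhd$-dominates $f\upto(H'\setminus U^+)$. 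The main obstacles are exactly this inclusion $\tilde U\subseteq U^+$ --- which rests on $T_\infty$ being Baire and on the stability of $\tpbf{\Sigma}_{\phi_t(u)}$ under restriction to $\Sigma^1_1\cap\tpbf{\Sigma}^0_\xi$ subspaces --- and arranging the dominating function to be defined on all of $H\setminus U^+$ rather than merely on $H\setminus(U\cup U^+)$.
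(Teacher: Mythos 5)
Your proof follows the paper's argument essentially verbatim: both use Fact~\ref{approximate} to replace $U$ by a $T_\xi$-open set ($V$ in the paper, $\tilde U$ in yours), show this $T_\xi$-open set lies inside $U^+$ via the defining property of $U^+$ (so $U\subseteq U^+$ up to a $T_\infty$-meager set), conclude that the $\phi_t(s)$-branch of $\eval{\bfS}$ $\infty$-a.e.\ $\lhd$-dominates $f$ on $H'\setminus U^+$, and then invoke the induction hypothesis over the $\Sigma^1_1$ set $H\setminus U^+$ using Claim~\ref{sublem1}. The extra point you flag --- that the dominating $\tpbf{\Sigma}_{\phi_t(s)}$-function must be total on $H\setminus U^+$ rather than only on $H\setminus(U\cup U^+)$ --- is a genuine detail the paper leaves implicit, and your patch (match $\eval{\bfS}$ away from the $T_\infty$-meager set $N=(H\setminus U^+)\cap U$ and take an arbitrary $\tpbf{\Sigma}_{\phi_t(s)}$-extension across $N$) is the right way to discharge it, since the $\infty$-a.e.\ hypothesis absorbs whatever values are chosen on $N$.
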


\begin{proof}
By Fact \ref{approximate}, there exists a $T_\xi$-open set $V$ which is equal to $U$ $\infty$-a.e.
Since $V$ is $T_\xi$-open, if $x\in V$, then there exists $O\in\Sigma^1_1\cap\tpbf{\Sigma}^0_\xi$ such that $x\in O\subseteq V$.
Note that $O$ is $\infty$-a.e.~included in $U$, and therefore, $f\upto H'\cap O$ is $\infty$-a.e.~$\lhd$-dominated by $\eval{\bfS}\upto H\cap U\in\tpbf{\Sigma}_{\phi_t(u)}$.
Hence, by the definition of $U^+$, we have $O\subseteq U^+$, and therefore $x\in U^+$ for any $x\in V$.
Since $x$ is an arbitrary element of $V$, we have $U\subseteq U^+$ $\infty$-a.e.
In particular, $H\setminus U^+\subseteq H\setminus U$ $\infty$-a.e., and thus $f\upto H'\setminus U^+$ is $\infty$-a.e.~$\lhd$-dominated by $\eval{\bfS}\upto(H\setminus U)\in\mathbf{\Sigma}_{\phi_t(s)}$.
By Claim \ref{sublem1}, $U^+$ is $\Pi^1_1$, and therefore, by the induction hypothesis restricted to the $\Sigma^1_1$-domain $H\setminus U^+$, the function $f\upto H'\setminus U^+$ is now $\lhd$-dominated by some function in ${\Sigma}_{\phi_t(s)}(\Delta^1_1)$ on $H\setminus U^+$.
\end{proof}

\medskip

Let $\psi\pcolon\om^2\to\om$ be a partial $\Pi^1_1$ function parametrizing all $\Delta^1_1$ functions, i.e., for any $\Delta^1_1$-function $d\colon\om\to\om$, there exists $e\in\om$ such that $\psi(e,n)=d(n)$ for any $n\in\om$.
Let $I$ be the set of all indices $e\in\om$ such that the function $\psi_e$ defined by $\psi_e(n)=\psi(e,n)$ is total.
Clearly, $I$ is $\Pi^1_1$.
Given $P\subseteq I$, we use the notation $[P]_\xi$ to denote the union of $\tpbf{\Sigma}^0_\xi$ set whose codes have $\Delta^1_1$-indices in $P$, i.e., $[P]_\xi=\bigcup_{e\in P}\delta_\xi(\psi_e)$, where $\delta_\xi(p)$ is the $\tpbf{\Sigma}^0_\xi$ set coded by $p$.
By Claim \ref{sublem1}, there exists a $\Pi^1_1$ set $P^+\subseteq I$ such that $U^+=[P^+]_\xi$.
Let $\Phi(Y)$ be the formula saying that $f\upto H'\setminus[Y\cap I]_\xi$ is $\lhd$-dominated by a $\Sigma_{\phi_t(s)}(\Delta^1_1)$ function on $H\setminus[Y\cap I]_\xi$.

\begin{claim}\label{sublem3}
$\Phi$ is $\Pi^1_1$ on $\Pi^1_1$.
\end{claim}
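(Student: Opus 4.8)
The plan is to verify the hypothesis of the first $\Pi^1_1$-reflection theorem (Fact \ref{fact:reflection}) for $\Phi$: for every uniformly $\Pi^1_1$ sequence $(A_k)_{k\in\om}$ of subsets of $\om$, the set $\{k\in\om:\Phi(A_k)\}$ is $\Pi^1_1$. Since $\Phi(A_k)$ refers to $A_k$ only through the sets $W_k:=H\setminus[A_k\cap I]_\xi$ and $H'\setminus[A_k\cap I]_\xi$ (where $H'={\rm dom}(f)$), I would first check that these are uniformly $\Sigma^1_1$ in $k$, and then rerun the complexity count from the proof of Claim \ref{sublem1} with $k$ as an extra parameter.

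For the first point, I would show that $[A_k\cap I]_\xi$ is uniformly $\Pi^1_1$. By definition, $x\in[A_k\cap I]_\xi$ iff there is $e\in\om$ with $e\in A_k$, $e\in I$, and $x\in\delta_\xi(\psi_e)$. For $e\in I$ the function $\psi_e$ is total and its graph is $\Pi^1_1$ uniformly in $e$; since ``$x\in\delta_\xi(p)$'' is arithmetical in $p$ (and a fixed code for $\xi<\om_1^{\rm CK}$), substituting $p:=\psi_e$ and using that $\Pi^1_1$ is closed under $\forall^{\Baire}$ and countable unions and intersections (the standard substitution closure for uniformly-$\Pi^1_1$ functions, cf.\ \cite[III.3.1]{Sac90}) shows that ``$x\in\delta_\xi(\psi_e)$'' is $\Pi^1_1$ in $(x,e)$. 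As $e$ ranges over $\om$ this is a countable union of $\Pi^1_1$ sets, so $[A_k\cap I]_\xi$ is $\Pi^1_1$ uniformly in $k$, whence $W_k$ and $H'\setminus[A_k\cap I]_\xi$ are $\Sigma^1_1$ uniformly in $k$, because $H$ and $H'$ are $\Sigma^1_1$.

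Next I would unfold $\Phi(A_k)$ as in Claim \ref{sublem1}: it holds iff there is a $\Delta^1_1$ code $c$ of a flowchart on $\phi_t(s)$ which is deterministic, whose evaluation has domain including $W_k$, and for which $f(y)\lhd q_{G_c(y)}$ for every $y\in H'\setminus[A_k\cap I]_\xi$, where $G_c\pcolon X\to\Baire$ is the $\Delta^1_1(c)$-measurable realizer from Lemma \ref{lem:complexity-flowchart}(1). By Lemma \ref{lem:complexity-flowchart}(2),(3) together with the uniform $\Sigma^1_1$-ness of $W_k$, the set of codes $c$ that are deterministic and whose evaluation covers $W_k$ is $\Pi^1_1$ in $(c,k)$; and the domination clause ``$\forall y,z\,[(y\in{\rm dom}(f)\ \wedge\ y\notin[A_k\cap I]_\xi\ \wedge\ G_c(y)=z)\to f(y)\lhd q_z]$'' is $\Pi^1_1$ in $(c,k)$, since its antecedent is $\Sigma^1_1$ (${\rm dom}(f)$ is $\Sigma^1_1$, the complement of $[A_k\cap I]_\xi$ is uniformly $\Sigma^1_1$, and $G_c(y)=z$ is $\Delta^1_1$ relative to $c$), the consequent $f(y)\lhd q_z$ is $\Pi^1_1$ because $f$ is $\Pi^1_1$-measurable w.r.t.\ $\lhd$, and $\Pi^1_1$ is closed under $\forall^{\Baire}$. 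Thus the matrix following ``$\exists c\in\Delta^1_1$'' is $\Pi^1_1$ in $(c,k)$; rewriting this existential over $\Delta^1_1$ reals as $\exists e\in\om\,[e\in I\ \wedge\ (\text{the matrix with }c\text{ replaced by }\psi_e)]$ and applying the same substitution closure turns it into a countable union of $\Pi^1_1$ sets, so $\{k:\Phi(A_k)\}$ is $\Pi^1_1$. The main obstacle is entirely the bookkeeping of effective uniformity in a parameter $k$ ranging over a uniformly $\Pi^1_1$ family: one must ensure that the operation $A_k\mapsto[A_k\cap I]_\xi$ and all flowchart-code sets coming from Lemma \ref{lem:complexity-flowchart} stay uniformly $\Pi^1_1$, the crux being the substitution-of-a-uniformly-$\Pi^1_1$-function step used both for ``$x\in\delta_\xi(\psi_e)$'' and for eliminating the quantifier over $\Delta^1_1$ reals.
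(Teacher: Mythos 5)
Your proof is correct and follows essentially the same route as the paper: show that $H\setminus[Y\cap I]_\xi$ is uniformly $\Sigma^1_1$, invoke Lemma \ref{lem:complexity-flowchart} for the code sets, and verify that the matrix after $\exists c\in\Delta^1_1$ is uniformly $\Pi^1_1$ exactly as in Claim \ref{sublem1}. (You also correctly state the domination clause over ${\rm dom}(f)\setminus[Y\cap I]_\xi$, where the paper's displayed formula has an apparent typo with $\cap$ in place of $\setminus$.)
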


\begin{proof}
Let $C(Y)$ be the set of codes of deterministic flowcharts $\bfS$ such that the domain of $\eval{\bfS}$ includes $H\setminus[Y\cap I]_\xi$.
If $(Y_n)_{n\in\om}$ is uniformly $\Pi^1_1$, then clearly $(H\setminus[Y_n\cap I]_\xi)_{n\in\om}$ is uniformly $\Sigma^1_1$.
Hence, by Lemma \ref{lem:complexity-flowchart}, $(C(Y_n))_{n\in\om}$ is uniformly $\Pi^1_1$.
Then, $\varphi(Y_n)$ holds if and only if
\begin{align*}
(\exists c\in\Delta^1_1)\;&\big[c\in C(Y_n)\;\land\;(\forall y,z)\\
&\left(\left(y\in {\rm dom}(f)\cap[Y_n\cap I]_\xi\;\land\;G_c(y)=z\right)\;\longrightarrow\;f(y)\lhd q_z\right)\big]
\end{align*}

By the same argument as in the previous claim, one can see that this is a $\Pi^1_1$ property uniformly in $n\in\om$.
Hence, $\Phi$ is $\Pi^1_1$ on $\Pi^1_1$.
\end{proof}

Recall from Claim \ref{sublem2} that $\Phi(P^+)$ holds since $U^+=[P^+]_\xi$.
Hence, by $\Pi^1_1$-reflection (Fact \ref{fact:reflection}), there exists a $\Delta^1_1$ set $R\subseteq P^+$ such that $\Phi(R)$ holds.
Then, $[R]_\xi$ is $\Sigma^0_{\xi}(\Delta^1_1)$.

\begin{claim}\label{sublem4}
The function $f\upto H'\cap [R]_\xi$ is $\lhd$-dominated by a $\Sigma_{\phi_t(u)}(\Delta^1_1)$-function on $H\cap[R]_\xi$.
\end{claim}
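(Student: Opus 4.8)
The plan is to reassemble the $\Delta^1_1$ flowcharts that are already produced, implicitly, by the membership test for $P^+$. Recall from the proof of Claim~\ref{sublem1} that for every $e\in P^+$ the set $A_e:=\delta_\xi(\psi_e)$ is a $\tpbf{\Sigma}^0_\xi$ set whose $\tpbf{\Sigma}^0_\xi$-code is $\Delta^1_1$ uniformly in $e$, and that $e\in P^+$ holds exactly when there is a $\Delta^1_1$ code of a deterministic flowchart on $\phi_t(u)$ over $\Baire$ whose evaluation has domain $\supseteq H\cap A_e$ and $\lhd$-dominates $f$ on $H'\cap A_e$. Since $R\subseteq P^+$ is $\Delta^1_1$ and $[R]_\xi=\bigcup_{e\in R}A_e$, it suffices to assemble the corresponding family of flowcharts into a single $\Delta^1_1$ flowchart on $\phi_t(u)$ over the space $[R]_\xi$ whose evaluation is total and single-valued on $H\cap[R]_\xi$ and still $\lhd$-dominates $f$; this evaluation is then the desired $\Sigma_{\phi_t(u)}(\Delta^1_1)$-function on $H\cap[R]_\xi$.

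First I would select these codes uniformly. Using Lemma~\ref{lem:complexity-flowchart}, the predicate ``$e'\in I$, and $\psi_{e'}$ is a code of a deterministic flowchart $\bfS$ on $\phi_t(u)$ with $H\cap A_e\subseteq\mathrm{dom}(\eval{\bfS})$ such that the $\Delta^1_1$-measurable realizer of $\bfS$ furnished by Lemma~\ref{lem:complexity-flowchart} $\lhd$-dominates $f$ on $H'\cap A_e$'' is $\Pi^1_1$ in $(e,e')$ (here we use that $H'$ is $\Sigma^1_1$, that $A_e$ is $\Sigma^0_\xi(\Delta^1_1)$ uniformly in $e\in R$, and that $f$ is $\Pi^1_1$-measurable w.r.t.\ $\lhd$). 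Adjoining the $\Delta^1_1$ disjunct ``$e\notin R$'' yields a total $\Pi^1_1$ relation $E\subseteq\om\times\om$, so $\Delta^1_1$-selection (Fact~\ref{fact:selection}) gives a $\Delta^1_1$ function $e\mapsto e'(e)$ with $E(e,e'(e))$; set $c_e=\psi_{e'(e)}$ and let $\bfS^{(e)}=(S^{(e)}_\sigma)_{\sigma\in\syn_{\phi_t(u)}}$ be the coded flowchart, which is uniformly $\Delta^1_1$ in $e\in R$. Next, since $(A_e)_{e\in R}$ is a uniform $\Delta^1_1$-sequence of $\tpbf{\Sigma}^0_\xi$ sets, the disjointification argument in the proof of Proposition~\ref{prop:zero-dim-partition} ($\Sigma^0_\xi$-uniformization, \cite[3E.10]{Mos09}) produces a pairwise disjoint uniform $\Delta^1_1$-sequence $(A^*_e)_{e\in R}$ of $\Sigma^0_\xi(\Delta^1_1)$ sets with $A^*_e\subseteq A_e$ and $\bigcup_{e\in R}A^*_e=[R]_\xi$.

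Now define a flowchart $\bfS^\star$ on $\phi_t(u)$ over $[R]_\xi$ by $S^\star_\sigma=\bigcup_{e\in R}(A^*_e\cap S^{(e)}_\sigma)$ when $\sigma$ is labeled by $\mul$, by $S^\star_{\sigma,n}=\bigcup_{e\in R}(A^*_e\cap S^{(e)}_{\sigma,n})$ when $\sigma$ is labeled by $\sqcup$, and by the forced value $[R]_\xi$ on Veblen nodes and leaves. Every node of $\syn_u$, viewed inside $\syn_{\phi_t(u)}$, has the Veblen nodes $\tau_0\prec\dots\prec\tau_\ell$ coming from the head $\phi_t$ among its Veblen initial segments, so $\rank(\sigma)\geq\xi$; hence each $A^*_e\cap S^{(e)}_\sigma$ is $\tpbf{\Sigma}^0_{\rank(\sigma)}$ and, being a $\Delta^1_1$-indexed union, $S^\star_\sigma$ is $\Sigma^0_{\rank(\sigma)}(\Delta^1_1)$, so $\bfS^\star$ is a $\Delta^1_1$ flowchart on $\phi_t(u)$. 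To check correctness, fix $x\in[R]_\xi$ and let $e$ be the unique index in $R$ with $x\in A^*_e$; by disjointness $x\in S^\star_\sigma\iff x\in S^{(e)}_\sigma$ at $\mul$-nodes and $x\in S^\star_{\sigma,n}\iff x\in S^{(e)}_{\sigma,n}$ at $\sqcup$-nodes, so an induction along $\syn_{\phi_t(u)}$ shows that the true positions of $x$ with respect to $\bfS^\star$ coincide with those with respect to $\bfS^{(e)}$. In particular $\bfS^\star$ is deterministic at $x$ and $\eval{\bfS^\star}(x)=\eval{\bfS^{(e)}}(x)$ whenever the latter is defined. Since $H\cap A^*_e\subseteq H\cap A_e\subseteq\mathrm{dom}(\eval{\bfS^{(e)}})$, the evaluation $g^\star:=\eval{\bfS^\star}$ is total and single-valued on $H\cap[R]_\xi$, so $g^\star\in\Sigma_{\phi_t(u)}(\Delta^1_1;H\cap[R]_\xi)$; and for $y\in H'\cap[R]_\xi$, taking the unique $e\in R$ with $y\in A^*_e\subseteq A_e$ yields $f(y)\lhd\eval{\bfS^{(e)}}(y)=g^\star(y)$, as required.

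I expect the main obstacle to be the uniform selection step: one must verify carefully, through Lemma~\ref{lem:complexity-flowchart}, that ``deterministic'', ``the evaluation domain contains $H\cap A_e$'', and ``the realizer $\lhd$-dominates $f$ on $H'\cap A_e$'' all remain $\Pi^1_1$ when $e'$ ranges over $I$ (so that $\psi_{e'}$ is genuinely $\Delta^1_1$ and these clauses make sense), in order to apply Fact~\ref{fact:selection}. The disjointification and the merge are then routine; the only point worth flagging is the inequality $\rank(\sigma)\geq\xi$, which is exactly what prevents the $\tpbf{\Sigma}^0_\xi$-guards $A^*_e$ from inflating the Borel ranks attached to $\bfS^\star$.
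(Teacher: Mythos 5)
Your proposal is correct and takes essentially the same route as the paper's proof: enumerate $R$, disjointify the associated $\tpbf{\Sigma}^0_\xi$ sets via Proposition~\ref{prop:zero-dim-partition}, select $\Delta^1_1$ flowchart codes uniformly by $\Delta^1_1$-selection, and merge them by guarding each assigned set with the disjoint pieces, using $\rank(\sigma)\geq\xi$ to keep the ranks intact and disjointness to preserve determinism. The only (immaterial) difference is that you select the flowcharts before disjointifying rather than after.
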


\begin{proof}
Fix a $\Delta^1_1$-enumeration of $R=(r_n)_{n\in\om}$, and put $O^\star_n=[\{r_n\}]_\xi$.
Note that $(O^\star_n)_{n\in\om}$ is a $\Delta^1_1$-sequence of $\tpbf{\Sigma}^0_\xi$ sets which covers $[R]_\xi$.
Since $X=\Baire$ is zero-dimensional, as seen in the proof of Proposition \ref{prop:zero-dim-partition}, one can effectively find a $\Delta^1_1$-sequence of pairwise disjoint $\tpbf{\Sigma}^0_\xi$ sets $(O'_n)_{n\in\om}$ such that $O'_n\subseteq O_n$ for each $n\in\om$ and $(O'_n)_{n\in\om}$ covers $[R]_\xi$.
Since $R\subseteq P^+$ and $O'_n\subseteq O^\star_n$, for any $n\in\om$, the function $f\upto H'\cap O'_n$ is $\lhd$-dominated by a $\Sigma_{\phi_t(u)}(\Delta^1_1)$-function $g'_n\colon H\cap O'_n\to Q$.

Now, consider the formula $E(n,c)$ (where $n\in\om$ and $c\in\om^\om$) stating that $c\in\Delta^1_1$ and $f\upto H'\cap O'_n$ is $\lhd$-dominated by the $\Sigma_{\phi_t(u)}(\Delta^1_1)$-function on $H\cap O_n'$ determined by the flowchart coded by $c$.
Since the formula saying that $c$ is $\Delta^1_1$ is $\Pi^1_1$, and the set $H\cap O_n'$ is $\Sigma^1_1$ uniformly in $n\in\om$, by Lemma \ref{lem:complexity-flowchart}, we observe that the formula $E(n,c)$ is $\Pi^1_1$.
By the property of $O_n'$, $E$ is a total relation, that is, for any $n\in\om$ there exists $c\in\om^\om$ such that $E(n,c)$.
Hence, by $\Delta^1_1$-selection (Fact \ref{fact:selection}), there exists a $\Delta^1_1$ function $f\colon\om\to\om^\om$ such that $E(n,f(n))$ for any $n\in\om$.

By the definition of $E$, this function $f$ yields a uniform $\Delta^1_1$-sequence of flowcharts $(\bfS_n)_{n\in\om}$ on $\phi_t(u)$ such that each $\bfS_n$ determines a $\Sigma_{\phi_t(u)}(\Delta^1_1)$-function $g_n'\colon H\cap O_n'\to Z$ which $\lhd$-dominates $f\upto H'\cap O_n'$.
We write $J$ for the syntax tree of $\phi_t(u)$, and then each flowchart $\bfS_n$ is of the form $(S^n_\sigma)_{\sigma\in J}$.
Then, for each $\sigma\in J$, if $\sigma$ is labeled by $\mul$, we define $S'_\sigma=\bigcup_{n\in\om}O_n'\cap S^n_\sigma$; and if $\sigma$ is labeled by $\ad$, we define $S'_{\sigma,i}=\bigcup_{n\in\om}O'_n\cap S^n_{\sigma,i}$ for each $i\in\om$.
Since such a node extends the neck node, its Borel rank is greater than or equal to $\xi$.
Hence, $S'_\sigma$ and $S'_{\sigma,i}$ are $\Sigma_{\rank(\sigma)}(\Delta^1_1)$ sets.
Thus, $\bfS'=(S'_\sigma)_{\sigma\in J}$ is a $\Delta^1_1$-flowchart on $\phi_t(u)$.
Since $(O'_n)_{n\in\om}$ is pairwise disjoint, for any $x\in [R]_\xi$ there exists a unique $n\in\om$ such that $x\in O'_n$.
Then, $\sigma$ is a true path for $x$ w.r.t.~$\bfS'$ if and only if $\sigma$ is a true path for $x$ w.r.t.~$\bfS_n$.
Since $\bfS_n$ determines a function, it is deterministic; hence this shows that $\bfS'$ is also deterministic.
Let $g'$ be the function determined by the flowchart $\bfS'$.
Then, $g'$ is a $\Sigma_{\phi_t(u)}(\Delta^1_1)$-function whose domain includes $H\cap[R_\xi]$, and we have $g'(x)=\eval{\bfS'}(x)=\eval{\bfS_n}(x)=g_n'(x)$ whenever $x\in H\cap O_n'$.
By our choice of $g_n'$, this implies that $f\upto H'\cap O_n'$ is $\lhd$-dominated by $g'\upto H\cap O_n'$ for any $n\in\om$.
Consequently, $f\upto H'\cap [R]_\xi$ is $\lhd$-dominated by $g'$.
\end{proof}

Since $\Phi(R)$ holds, there exists a $\Sigma_{\phi_t(s)}(\Delta^1_1)$-function $g''\colon H\setminus[R]_\xi\to Z$ which $\lhd$-dominates $f\upto H'\setminus[R]_\xi$.
Let $\bfS''$ be a $\Delta^1_1$-flowchart on $\phi_t(s)$ determining $g''$, and let $\bfS'$ be the $\Delta^1_1$-flowchart on $\phi_t(u)$ obtained by Claim \ref{sublem4} which determines a $\Sigma_{\phi_t(u)}(\Delta^1_1)$-function $g\colon H\cap[R]_\xi\to Z$ which $\lhd$-dominates $f\upto H'\cap[R]_\xi$.
Since $[R]_\xi$ is in $\Sigma^0_\xi(\Delta^1_1)$, the straightforward combination of the $\Delta^1_1$-flowcharts $\bfS''$ and $\bfS'$ yields a $\Delta^1_1$-flowchart $\bfS^\star$ on $t=\phi_t(s\mul u)$.
More precisely, recall that $\sigma'$ is the neck node of $\syn_t$, and its Borel rank is $\xi$.
Then, define $S^\star_{\sigma'}=[R]_\xi$.
Moreover, for each node $\sigma\in\syn_t$, if $\sigma$ is of the form $\sigma'\fr 0\fr \tau$, define $S^\star_\sigma=S''_{\sigma'\fr\tau}$; and if $\sigma$ is of the form $\sigma'\fr 1\fr \tau$, define $S^\star_\sigma=S'_{\sigma'\fr\tau}$.
It is easy to see that $\bfS^\star=(S^\star_\sigma)_{\sigma\in\syn_t}$ is a $\Delta^1_1$-flowchart determining a $\Sigma_{t}(\Delta^1_1)$-function $g^\star\colon H\to Z$ which $\lhd$-dominates $f$.
This completes the proof for Case 2.


\medskip

{\bf Case 3.}
The neck node of $\syn_t$ is labeled by $\ad$, i.e., $t$ is of the form $\phi_t(\ad_{n\in\om}s_n)$.
By our assumption, $f$ is $\infty$-a.e.~$\lhd$-dominated by a total $\tpbf{\Sigma}_t$ function $g\colon H\to Z$.
Let $\bfS=(S_\sigma)_{\sigma\in\syn_t}$ be a flowchart on the term $t$ which determines $g$, i.e., $\eval{\bfS}=g$.
Let $\xi$ be the Borel rank of the neck node of $\syn_t$.
Note that $\xi<\om_1^{\rm CK}$ since $t$ is $\Delta^1_1$.
Then, a $\tpbf{\Sigma}^0_{\xi}$ cover $S_n$ of $H$ is assigned to the neck node $\sigma'$, i.e., $S_{\sigma'}=(S_n)_{n\in\om}$.
By Observation \ref{obs:Veblen-head-body}, note that we have $\eval{\bfS}\upto (H\cap S_n)\in\tpbf{\Sigma}_{\phi_t(s_n)}$.
Then, define $U_n^+$ as the following $T_\xi$-open set:
\begin{align*}
x\in U_n^+\iff&(\exists O\in\Sigma^1_1\cap\tpbf{\Sigma}^0_{\xi})(\exists g\in\tpbf{\Sigma}_{\phi_t(s_n)}(H\cap O))\\ 
&\qquad[x\in O\;\land\;(\forall^\infty y)\;(y\in {\rm dom}(f)\cap O\;\longrightarrow\;f(y)\lhd g(y))].
\end{align*}

As in the Case 2, intuitively, $U_n^+\subseteq X$ is the largest $T_{\xi}$-open set such that some $\tpbf{\Sigma}_{\phi_t(s_n)}$-function on $H\cap U_n^+$ $\infty$-a.e.~$\lhd$-dominates $f$.
Then, we have the following equivalences:
\begin{align*}
x\in U_n^+\iff&(\exists O\in\Sigma^1_1\cap\tpbf{\Sigma}^0_{\xi})(\exists g^\star\in{\Sigma}_{\phi_t(s_n)}(\Delta^1_1;H\cap O))\\
&\qquad[x\in O\;\land\;(\forall y)\;(y\in {\rm dom}(f)\cap O\;\longrightarrow\;f(y)\lhd g^\star(y))]\\
\iff&(\exists O^\star\in{\Sigma}^0_{\xi}(\Delta^1_1))(\exists g^\star\in{\Sigma}_{\phi_t(s_n)}(\Delta^1_1;H\cap O^\star))\\
&\qquad[x\in O\;\land\;(\forall y)\;(y\in {\rm dom}(f)\cap O^\star\;\longrightarrow\;f(y)\lhd g^\star(y))]
\end{align*}

The first equivalence follows from the induction hypothesis restricted to the $\Sigma^1_1$-domain $H\cap O$.
The second equivalence follows from Lemma \ref{lem:complexity-flowchart} and Louveau's separation theorem (Fact \ref{fact:Louveau-sep}) as in the Case 2.
As in Claim \ref{sublem1}, one can see that $U^+_n$ is obtained from a $\Pi^1_1$-sequence of $\Delta^1_1$-indices of $\tpbf{\Sigma}^0_{\xi}$ sets, uniformly in $n\in\om$.
In other words, there exists a $\Pi^1_1$ set $P^+\subseteq\om\times I$ such that $U_n^+=[P^+_n]_\xi$, where $P^+_n=\{e\in I:(n,e)\in P^+\}$, where $I$ is the set defined as in Case 2.

\begin{claim}\label{sublemlim1}
$(U^+_n)_{n\in\om}$ covers $H$.
\end{claim}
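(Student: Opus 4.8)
The plan is to fix an arbitrary point $x\in H$ and produce an index $n$ with $x\in U_n^+$. The starting point is that, since $\bfS$ determines the \emph{total} function $g$, the flowchart $\bfS$ is total over $H$; and as every node of $\syn_t$ strictly above the neck node $\sigma'$ is labelled by a Veblen symbol, the domain assignment of $\bfS$ gives $D_{\sigma'}=H$. Totality of $\bfS$ therefore forces the sequence $S_{\sigma'}=(S_n)_{n\in\om}$ at the neck to be a $\tpbf{\Sigma}^0_\xi$ cover of $H$, so we may fix $n$ with $x\in S_n$. As already recorded (via Observation \ref{obs:Veblen-head-body}(3)), $g_n:=\eval{\bfS}\upto(H\cap S_n)$ is a $\tpbf{\Sigma}_{\phi_t(s_n)}$-function; and since $f$ is $\infty$-a.e.\ $\lhd$-dominated by $\eval{\bfS}$ on $H$, it is $\infty$-a.e.\ $\lhd$-dominated by $g_n$ on ${\rm dom}(f)\cap S_n$. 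So $g_n$ is the candidate witnessing function; what remains is to supply the neighbourhood of $x$ required by the definition of $U_n^+$.

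For this, the clean route is Louveau's separation theorem (Fact \ref{fact:Louveau-sep}). The sets $H\cap S_n$ and $H\setminus S_n$ are disjoint analytic sets, and they are separated by the $\tpbf{\Sigma}^0_\xi$ set $S_n$; hence they are separated by a $\Sigma^0_\xi(\Delta^1_1)$ set $O$ with $H\cap S_n\subseteq O$ and $O\cap(H\setminus S_n)=\emptyset$. Since $\xi<\om_1^{\rm CK}$, such an $O$ is $\Delta^1_1$, so $O\in\Sigma^1_1\cap\tpbf{\Sigma}^0_\xi$, and it satisfies $x\in O$ and $O\cap H=H\cap S_n$. Therefore $g_n=\eval{\bfS}\upto(H\cap O)$ is a $\tpbf{\Sigma}_{\phi_t(s_n)}$-function on $H\cap O$ which $\infty$-a.e.\ $\lhd$-dominates $f$ on ${\rm dom}(f)\cap O$, so the pair $(O,g_n)$ witnesses $x\in U_n^+$. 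As $x\in H$ was arbitrary, $(U_n^+)_{n\in\om}$ covers $H$. (A variant, closer to the proof of Claim \ref{sublem2}, replaces the separation theorem by Louveau's approximation lemma (Fact \ref{approximate}): one gets a $T_\xi$-open $S_n'$ with $S_n=S_n'$ $\infty$-a.e., shows that every basic $T_\xi$-open $O\subseteq S_n'$ lies in $U_n^+$, and concludes $\bigcup_nU_n^+\supseteq\bigcup_nS_n'=H$ modulo a $T_\infty$-meager set.)

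The main obstacle is keeping the witnessing neighbourhood of the correct \emph{lightface} complexity although it is extracted from the boldface flowchart $\bfS$: the pair $(H\cap S_n,H\setminus S_n)$ in the separation step is only $\tpbf{\Sigma}^1_1$, so Fact \ref{fact:Louveau-sep} must be invoked in parameter-relativized form, and one has to check — exactly as in the $\Pi^1_1$-presentation of $U_n^+$ obtained in the style of Claim \ref{sublem1} — that this does not affect membership in $U_n^+$. If one instead goes through Fact \ref{approximate}, the analogous delicate point is to verify that the restriction $g_n\upto(H\cap O\cap S_n)$ extends to a genuine $\tpbf{\Sigma}_{\phi_t(s_n)}$-function over all of $H\cap O$, the extra set $H\cap O\setminus S_n$ being $\infty$-a.e.\ negligible; this is done by extending $g_n$ by a constant when $s_n$ is a constant symbol, and otherwise by folding the negligible remainder into the cover/partition attached to the $\ad$- or $\mul$-node at the neck of $\phi_t(s_n)$.
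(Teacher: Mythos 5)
Your primary route has a genuine gap at the separation step. The definition of $U_n^+$ demands a witness $O\in\Sigma^1_1\cap\tpbf{\Sigma}^0_\xi$, i.e.\ \emph{lightface} $\Sigma^1_1$. But the flowchart $\bfS$ determining $g$ is an arbitrary boldface object, so $S_n$ is only $\tpbf{\Sigma}^0_\xi$ and the pair $(H\cap S_n,\,H\setminus S_n)$ is only $\tpbf{\Sigma}^1_1$. Applying Fact \ref{fact:Louveau-sep} relativized to a parameter $p$ coding $\bfS$ yields a separator in $\Sigma^0_\xi(\Delta^1_1(p))$, which is not lightface $\Sigma^1_1$ and hence cannot serve as the $O$ in the definition of $U_n^+$. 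You acknowledge this and defer it to ``one has to check,'' but that is exactly the point at which the pointwise strategy breaks: there is no reason an arbitrary $x\in H\cap S_n$ should have a lightface $\Sigma^1_1\cap\tpbf{\Sigma}^0_\xi$ neighbourhood that is ($\infty$-a.e.) contained in $S_n$, and the paper never proves the pointwise inclusion $H\cap S_n\subseteq U_n^+$ — it only proves it $\infty$-a.e.

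The paper's actual argument is the one you relegate to a parenthetical variant, and your sketch of that variant stops one step short. Via Fact \ref{approximate} one gets, for each $n$, a $T_\xi$-open set equal to $S_n$ $\infty$-a.e.; arguing as in Claim \ref{sublem2} (basic $T_\xi$-open sets \emph{are} lightface $\Sigma^1_1$, which is what makes them legitimate witnesses) one obtains $S_n\subseteq U_n^+$ $\infty$-a.e., hence $H\setminus\bigcup_nU_n^+\subseteq\bigcup_n(S_n\setminus U_n^+)$ is $T_\infty$-meager. Concluding ``covers $H$ modulo a $T_\infty$-meager set'' is not the claim; the essential final step is that $H\setminus\bigcup_nU_n^+$ is $\Sigma^1_1$ (this is where the $\Pi^1_1$-presentation of $U_n^+$ from the Claim \ref{sublem1}-style computation is used), hence $T_\infty$-open, hence empty by the Baire category theorem for $T_\infty$ (Fact \ref{fact:GH-Baire}). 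Without that last step — meager plus open implies empty — the a.e.\ statement does not upgrade to an actual cover, and your proof is incomplete.
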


\begin{proof}
By Fact \ref{approximate} there is some $T_{\xi}$ open set which is equal to $S_n$ $\infty$-a.e. for each $n$.
Then, as in the proof of Claim \ref{sublem2}, one can see that $S_n\subseteq U_n^+$ $\infty$-a.e.
Since $(S_n)_{n\in\om}$ covers $H$, we have $H\setminus\bigcup_{n\in\om}U_n^+=\bigcup_{n\in\om}S_n\setminus\bigcup_{n\in\om}U_n^+\subseteq\bigcup_{n\in\om}(S_n\setminus U_n^+)$.
This set is $T_\infty$-meager since a countable union of $T_\infty$-meager sets is $T_\infty$-meager.
However, $H\setminus \bigcup_n U_n^+$ is $\Sigma^1_1$ and thus $T_\infty$-open. By the Baire category theorem for $T_\infty$ (Fact \ref{fact:GH-Baire}), this is in fact an empty set.
Consequently, $(U_n^+)_{n\in\om}$ covers $H$.
\end{proof}

For $Y\subseteq\om^2$, put $(Y)_n=\{e\in\om:(n,e)\in Y\}$ for each $n\in\om$.
Let $\Phi(Y)$ be the formula stating that $H\subseteq\bigcup_{n\in\om}[(Y\cap(\om\times D))_n]_\xi$.
For any uniform sequence of $\Pi^1_1$ sets $(P_k)_{k\in\om}$, it is easy to see that $\Phi(P_k)$ is $\Pi^1_1$ uniformly in $n\in\om$ since $H$ is $\Sigma^1_1$.
Hence, $\Phi$ is $\Pi^1_1$ on $\Pi^1_1$.
By Claim \ref{sublemlim1}, $\Phi(P^+)$ holds.
Therefore, by $\Pi^1_1$-reflection (Fact \ref{fact:reflection}), there exists a $\Delta^1_1$ set $R\subseteq P^+$ such that $\Phi(R)$ holds.

Fix a $\Delta^1_1$-enumeration of $(R)_n=(r_{n,k})_{k\in\om}$, and put $O^\star_{n,k}=[\{r_{n,k}\}]_\xi$.
Note that $(O^\star_{n,k})_{n,k\in\om}$ is a $\Delta^1_1$-sequence of $\tpbf{\Sigma}^0_\xi$ sets which covers $H$.
Since $X=\Baire$ is zero-dimensional, as seen in the proof of Proposition \ref{prop:zero-dim-partition}, one can effectively find a $\Delta^1_1$-sequence of pairwise disjoint $\tpbf{\Sigma}^0_\xi$ sets $(O'_{n,k})_{n\in\om}$ such that $O'_{n,k}\subseteq O^\star_{n,k}$ for any $n,k\in\om$ and $(O'_{n,k})_{n\in\om}$ covers $H$.
Since $(R)_n\subseteq P_n^+$ and $O'_{n,k}\subseteq O^\star_{n,k}$, for any $n,k\in\om$, the function $f\upto H'\cap O'_{n,k}$ is $\lhd$-dominated by a $\Sigma_{\phi_t(s_n)}(\Delta^1_1)$-function $g'_{n,k}\colon H\cap O'_{n,k}\to Z$.
Here, $H'$ is the domain of $f$ as in Case 2.

Put $S^\star_n=\bigcup_{k\in\om}O'_{n,k}$ for each $n\in\om$.
Then, $S^\star_n$ is $\Sigma^0_\xi(\Delta^1_1)$, and as in Claim \ref{sublem4}, one can show that $f\upto H'\cap S^\star_n$ is $\lhd$-dominated by a $\Sigma_{\phi_t(s_n)}(\Delta^1_1)$-function on $H\cap S^\star_n$ for each $n\in\om$.
Again, as in the proof of Claim \ref{sublem4}, consider the formula $E(n,c)$ (where $n\in\om$ and $c\in\om^\om$) stating that $c\in\Delta^1_1$ and $f\upto H'\cap S^\star_n$ is $\lhd$-dominated by the $\Sigma_{\phi_t(s_n)}(\Delta^1_1)$-function on $H\cap S^\star_n$ determined by the flowchart coded by $c$.
As seen in the proof of Claim \ref{sublem4}, the formula $E(n,c)$ is $\Pi^1_1$, and by the property of $S^\star_n$, $E$ is a total relation.
Hence, by $\Delta^1_1$-selection (Fact \ref{fact:selection}), there exists a $\Delta^1_1$ function $f\colon\om\to\om^\om$ such that $E(n,f(n))$ for any $n\in\om$.

By the definition of $E$, this function $f$ yields a uniform $\Delta^1_1$-sequence of flowcharts $(\bfS_n)_{n\in\om}$ such that each $\bfS_n=(S^n_\sigma)_{\sigma}$ determines a $\Sigma_{\phi_t(s_n)}(\Delta^1_1)$-function $g_n'\colon H\cap O_n'\to Z$ which $\lhd$-dominates $f\upto H'\cap O_n'$.
Since $S^\star_n$ is in $\Sigma^0_\xi(\Delta^1_1)$ uniformly in $n\in\om$, the straightforward combination of the $\Delta^1_1$-flowcharts $(\bfS_n)_{n\in\om}$ yields a $\Delta^1_1$-flowchart $\bfS^\star$ on $t=\phi_t(\ad_{n\in\om}s_n)$.
More precisely, recall that $\sigma'$ is the neck node of $\syn_t$, and its Borel rank is $\xi$.
Then, define $S^\star_{\sigma',n}=S^\star_n$.
Moreover, for each node $\sigma\in\syn_t$, if $\sigma$ is of the form $\sigma'\fr n\fr \tau$, define $S^\star_\sigma=S^n_{\sigma'\fr\tau}$.
Since $(V_n)_{n\in\om}$ is a $\Delta^1_1$-sequence of pairwise disjoint $\tpbf{\Sigma}^0_\xi$ sets, as in Claim \ref{sublem4}, one can show that $\bfS^\star=(S^\star_\sigma)_{\sigma\in\syn_t}$ is a $\Delta^1_1$-flowchart determining a $\Sigma_{t}(\Delta^1_1)$-function $g^\star\colon H\to Z$ which $\lhd$-dominates $f$.
This completes the proof.
\end{proof}

\begin{proof}[Proof of Lemma \ref{thm:main-theorem-multi}]
In the proof of Lemma \ref{thm:main-theorem}, the assumption $X=\om^\om$ (i.e., zero-dimensionality of the space) is only used to ensure that several flowcharts are deterministic.
For this reason, if $X$ is not necessarily zero-dimensional, then we cannot ensure that the resulting function $g^\star$ is single-valued.
To verify the assertion, we need to modify the proof of Lemma \ref{thm:main-theorem} to conform to multi-valued functions; for instance, we need to replace the condition $G_c(y)=z$ in Claim \ref{sublem1} with $z\in G_c(y)$.
However, as in the proof of Lemma \ref{lem:complexity-flowchart}, one can see that $q_z\in h(x)$ is a $\Delta^1_1$ condition whenever $h$ is in $\Sigma_s(\Delta^1_1)$ where $s$ is an $\lang(Q)$-term.
Thus, in any complexity calculation (e.g.~Claims \ref{sublem1}, \ref{sublem3} and \ref{sublem4}), involving multi-valued functions does not increase the complexity.
Therefore, we can make exactly the same argument as in the proof of Lemma \ref{thm:main-theorem}.
\end{proof}


\begin{question}
Does Theorems \ref{thm:main-theorem-extension} and \ref{thm:main-theorem-domination} hold for an arbitrary computable Polish space instead of $\Baire$?
\end{question}

\bibliographystyle{plain}
\bibliography{Louveau}

\begin{thebibliography}{10}

\bibitem{Blo14}
Alexander~C. Block.
\newblock Operations on a {W}adge-type hierarchy of ordinal-valued functions.
\newblock Master's thesis, Universiteit van Amsterdam, 2014.

\bibitem{CCA}
Vasco Brattka and Peter Hertling, editors.
\newblock {\em Handbook of Computability and Complexity in Analysis}.
\newblock Theory and Applications of Computability. Springer, Cham, 2021.

\bibitem{CaHo20}
Antonin Callard and Mathieu Hoyrup.
\newblock {Descriptive Complexity on Non-Polish Spaces}.
\newblock In Christophe Paul and Markus Bl{\"a}ser, editors, {\em 37th
  International Symposium on Theoretical Aspects of Computer Science (STACS
  2020)}, volume 154 of {\em Leibniz International Proceedings in Informatics
  (LIPIcs)}, pages 8:1--8:16, Dagstuhl, Germany, 2020. Schloss
  Dagstuhl--Leibniz-Zentrum f{\"u}r Informatik.

\bibitem{Cam19}
Riccardo Camerlo.
\newblock Continuous reducibility: functions versus relations.
\newblock {\em Rep. Math. Logic}, (54):45--63, 2019.

\bibitem{deB13}
Matthew de~Brecht.
\newblock Quasi-{P}olish spaces.
\newblock {\em Ann. Pure Appl. Logic}, 164(3):356--381, 2013.

\bibitem{BPS}
Matthew de~Brecht, Arno Pauly, and Matthias Schr\"{o}der.
\newblock Overt choice.
\newblock {\em Computability}, 9(3-4):169--191, 2020.

\bibitem{Dupxx}
J.~Duparc.
\newblock Wadge hierarchy and {V}eblen hierarchy, part {II}: {B}orel sets of
  infinite rank.
\newblock unpublished.

\bibitem{Dup01}
J.~Duparc.
\newblock Wadge hierarchy and {V}eblen hierarchy. {I}. {B}orel sets of finite
  rank.
\newblock {\em J. Symbolic Logic}, 66(1):56--86, 2001.

\bibitem{Dup03}
J.~Duparc.
\newblock The {S}teel hierarchy of ordinal valued {B}orel mappings.
\newblock {\em J. Symbolic Logic}, 68(1):187--234, 2003.

\bibitem{FuMa10}
Hiroshi Fujita and Tam\'{a}s M\'{a}trai.
\newblock On the difference property of {B}orel measurable functions.
\newblock {\em Fund. Math.}, 208(1):57--73, 2010.

\bibitem{GKN21}
Vassilios Gregoriades, Takayuki Kihara, and Keng~Meng Ng.
\newblock Turing degrees in {P}olish spaces and decomposability of {B}orel
  functions.
\newblock {\em J. Math. Log.}, 21(1):2050021, 41, 2021.

\bibitem{Her93}
Peter Hertling.
\newblock {\em Topologische Komplexit\"atsgrade von Funktionen mit endlichem
  Bild.}
\newblock PhD thesis, Fernuniversit\"at Hagen, 1993.

\bibitem{Her20}
Peter Hertling.
\newblock Forests describing {W}adge degrees and topological {W}eihrauch
  degrees of certain classes of functions and relations.
\newblock {\em Computability}, 9(3-4):249--307, 2020.

\bibitem{HRSS}
Mathieu Hoyrup, Crist\'{o}bal Rojas, Victor Selivanov, and Donald~M. Stull.
\newblock Computability on quasi-{P}olish spaces.
\newblock In {\em Descriptional complexity of formal systems}, volume 11612 of
  {\em Lecture Notes in Comput. Sci.}, pages 171--183. Springer, Cham, 2019.

\bibitem{ITS16}
Daisuke Ikegami, Philipp Schlicht, and Hisao Tanaka.
\newblock Borel subsets of the real line and continuous reducibility.
\newblock {\em Fund. Math.}, 244(3):209--241, 2019.

\bibitem{Kec95}
Alexander~S. Kechris.
\newblock {\em Classical descriptive set theory}, volume 156 of {\em Graduate
  Texts in Mathematics}.
\newblock Springer-Verlag, New York, 1995.

\bibitem{Kih}
Takayuki Kihara.
\newblock A gentle introduction to the {W}adge theory on {B}orel functions.
\newblock preprint.

\bibitem{Kih15}
Takayuki Kihara.
\newblock Decomposing {B}orel functions using the {S}hore-{S}laman join
  theorem.
\newblock {\em Fund. Math.}, 230(1):1--13, 2015.

\bibitem{Kih16}
Takayuki Kihara.
\newblock Borel-piecewise continuous reducibility for uniformization problems.
\newblock {\em Log. Methods Comput. Sci.}, 12(4):Paper No. 4, 35, 2016.

\bibitem{KM19}
Takayuki Kihara and Antonio Montalb\'{a}n.
\newblock On the structure of the {W}adge degrees of bqo-valued {B}orel
  functions.
\newblock {\em Trans. Amer. Math. Soc.}, 371(11):7885--7923, 2019.

\bibitem{KS19}
Takayuki Kihara and Victor Selivanov.
\newblock Wadge-like degrees of {B}orel bqo-valued functions.
\newblock submitted, available at arXiv:1909.10835.

\bibitem{Lou83}
A.~Louveau.
\newblock Some results in the {W}adge hierarchy of {B}orel sets.
\newblock In {\em Cabal seminar 79--81}, volume 1019 of {\em Lecture Notes in
  Math.}, pages 28--55. Springer, Berlin, 1983.

\bibitem{Lou80}
Alain Louveau.
\newblock A separation theorem for {$\Sigma ^{1}_{1}$} sets.
\newblock {\em Trans. Amer. Math. Soc.}, 260(2):363--378, 1980.

\bibitem{LoSR}
Alain Louveau and Jean Saint-Raymond.
\newblock The strength of {B}orel {W}adge determinacy.
\newblock In {\em Cabal {S}eminar 81--85}, volume 1333 of {\em Lecture Notes in
  Math.}, pages 1--30. Springer, Berlin, 1988.

\bibitem{Mos09}
Yiannis~N. Moschovakis.
\newblock {\em Descriptive set theory}, volume 155 of {\em Mathematical Surveys
  and Monographs}.
\newblock American Mathematical Society, Providence, RI, second edition, 2009.

\bibitem{Peq15}
Yann Pequignot.
\newblock A {W}adge hierarchy for second countable spaces.
\newblock {\em Arch. Math. Logic}, 54(5-6):659--683, 2015.

\bibitem{Sac90}
Gerald~E. Sacks.
\newblock {\em Higher recursion theory}.
\newblock Perspectives in Mathematical Logic. Springer-Verlag, Berlin, 1990.

\bibitem{SR07}
Jean Saint~Raymond.
\newblock Preservation of the {B}orel class under countable-compact-covering
  mappings.
\newblock {\em Topology Appl.}, 154(8):1714--1725, 2007.

\bibitem{Schl17}
Philipp Schlicht.
\newblock Continuous reducibility and dimension of metric spaces.
\newblock {\em Arch. Math. Logic}, 57(3-4):329--359, 2018.

\bibitem{Sch02}
Matthias Schr\"{o}der.
\newblock Extended admissibility.
\newblock {\em Theoret. Comput. Sci.}, 284(2):519--538, 2002.

\bibitem{Sel95}
V.~L. Selivanov.
\newblock Fine hierarchies and {B}oolean terms.
\newblock {\em J. Symbolic Logic}, 60(1):289--317, 1995.

\bibitem{Sel20}
Victor Selivanov.
\newblock {$Q$}-{W}adge degrees as free structures.
\newblock {\em Computability}, 9(3-4):327--341, 2020.

\bibitem{Sel19b}
Victor Selivanov.
\newblock A {Q}-{W}adge hierarchy in quasi-{P}olish spaces.
\newblock {\em The Journal of Symbolic Logic}, 2020.
\newblock to appear.

\bibitem{Sel07}
Victor~L. Selivanov.
\newblock Hierarchies of {$\Delta^0_2$}-measurable {$k$}-partitions.
\newblock {\em MLQ Math. Log. Q.}, 53(4-5):446--461, 2007.

\bibitem{Sol98}
S{\l}awomir Solecki.
\newblock Decomposing {B}orel sets and functions and the structure of {B}aire
  class {$1$} functions.
\newblock {\em J. Amer. Math. Soc.}, 11(3):521--550, 1998.

\bibitem{EMS}
Fons van Engelen, Arnold~W. Miller, and John Steel.
\newblock Rigid {B}orel sets and better quasi-order theory.
\newblock In {\em Logic and combinatorics ({A}rcata, {C}alif., 1985)},
  volume~65 of {\em Contemp. Math.}, pages 199--222. Amer. Math. Soc.,
  Providence, RI, 1987.

\bibitem{Wad83}
William~Wilfred Wadge.
\newblock {\em Reducibility and Determinateness on the {B}aire Space}.
\newblock ProQuest LLC, Ann Arbor, MI, 1983.
\newblock Thesis (Ph.D.)--University of California, Berkeley.

\end{thebibliography}
\end{document}